\newcommand{\crefnopar}[1]{\namecref{#1}~\ref{#1}} % allows to use \cref without printing parenthesis
\theoremstyle{definition}
\newtheorem{theorem}{Theorem}
\newtheorem{proposition}[theorem]{Proposition}
\newtheorem{lemma}[theorem]{Lemma}
\newtheorem{corollary}[theorem]{Corollary}
\newtheorem{definition}[theorem]{Definition}
\newtheorem{assumption}{Assumption}
\theoremstyle{remark}
\newtheorem{example}{Example}[section]
\begin{document}

\title{\textsc{Fermat Distance-to-Measure:\\
a Robust Measure-Based Metric}}

\author{Jérôme Taupin\thanks{Inria Saclay \& Institut de Mathématiques d'Orsay} \and Frédéric Chazal\footnotemark[1]}

\maketitle

\begin{abstract}
Given a probability measure with density, Fermat distances and density-driven metrics are conformal transformations of the Euclidean metric that shrink distances in high density areas and enlarge distances in low density areas.
Although they have been widely studied and have shown to be useful in various machine learning tasks, they are limited to measures with density (with respect to Lebesgue measure, or volume form on manifold).
In this paper, by replacing the density with the \ac{dtm}, we introduce a new metric, the \ac{fdtm}, defined for any probability measure in $\RR^d$.
We derive strong stability properties for the \ac{fdtm} with respect to the measure and propose an estimator from random sampling of the same measure, featuring an explicit bound on its convergence rate.
\end{abstract}

\section{Introduction}

Fermat distances belong to the class of density-driven metrics that have been established as a useful tool for machine learning tasks, such as semi-supervised learning or clustering, when the Euclidean metric does not sufficiently exploit the geometry of the distribution underlying the data. 
Given a measure $\mu$ with density $f$ with respect to the Lebesgue measure in a Euclidean space or the volume form in a more general Riemannian manifold, the Fermat distance between two points $x$ and $y$ is defined as
\begin{align}
    \label{eq:fermat}
    \inf_\gamma \int_\gamma f^{-\beta}
\end{align}
where the infimum is taken over all rectifiable paths $\gamma$ from $x$ to $y$ and $\beta\ge0$ is a parameter.
The Fermat distance has long been studied, with applications ranging from unsupervised and semi-supervised machine learning to persistent homology computation in topological data analysis and signal processing---see, \eg, the recent paper \cite{garciatrillosFermatDistancesMetric2024} and the references therein. It also possesses the remarkable property that it can be estimated from the metric induced by a simple weighted graph built on top of random samples of~$\mu$, called the sample Fermat distance.
Although this estimator comes with convergence guarantees as the sample size increases~\cite{groismanNonhomogeneousEuclideanFirstpassage2022}, quantitative bounds come on the other hand at the price of strict smoothness conditions on the density $f$~\cite{fernandezIntrinsicPersistentHomology2023, garciatrillosFermatDistancesMetric2024}. Moreover, the existence of the continuous Fermat distance requires the measure $\mu$ to have a well-defined density~$f$, restricting the convergence guarantees of the sample Fermat distance to such measures.

The aim of this paper is to introduce a new family of measure-based metrics---called \ac{fdtm}---that overcomes the above limitations and holds strong stability properties with respect to perturbations of the measure. 
We rely on the notion of \ac{dtm} functions introduced in~\cite{chazalGeometricInferenceMeasures2011}. Given any probability measure on $\RR^d$, the \ac{dtm} is a distance-like function that behaves similarly to the inverse of a density.
Intuitively, the \ac{dtm} function associated to a measure $\mu$ at a point $x \in \RR^d$ represents how far $x$ is from a given fraction $m$ of the total mass of $\mu$. It has been proven~\cite{biauWeightedKnearestNeighbor2011} that the \ac{dtm} can be viewed as an estimator of the density of $\mu$ (when it is absolutely continuous with respect to the Lebesgue measure) and our approach roughly consists in replacing  $f^{-\beta}$ in \cref{eq:fermat} with the \ac{dtm}.
Thanks to regularity and stability properties of the \ac{dtm}, the \ac{fdtm} presents several benefits compared to the classical Fermat distance.
First, it is well-defined for any measure $\mu$, regardless of the existence of a density.
Second, we provide quantitative stability results for the \ac{fdtm} \ac{wrt} perturbations of the measure in a Wasserstein metric and to perturbations of its support in Hausdorff distance.
Last, we propose a natural estimator of the \ac{fdtm} from random samples of the measure and, building on the stability properties of the \ac{fdtm}, provide explicit convergence rates for this estimator that do not depend on the ambient dimension but on some mild regularity properties of the sampled measure.

The formal definition of \ac{fdtm} and the statement of the above results are provided in \cref{sec:background_results}. 
\Cref{sec:geodesic&stability} is devoted to the study of geodesics and stability properties of the \ac{fdtm}. 
The estimation of the \ac{fdtm} from random samples is studied in \cref{sec:estimation} where an effective estimator is provided together with an upper bound on its convergence rate.
A similar lower bound is also provided on the convergence rate of any estimator in the worst case \ac{wrt} the measure.
Finally, we give some numerical illustrations of our results in \cref{sec:simulations}.
Although most of the proofs of the paper rely on simple ideas, they often require technical arguments and intermediate results that are detailed in the appendices.

\section{Definitions and Overview of the Results}
\label{sec:background_results}

The setting of this paper is the space $\RR^d$ endowed with the Euclidean norm $\|\cdot\|$.
A list of notations used throughout this paper is given in \cref{tab:notations}.
In this section, we provide the definition of the \ac{dtm} and \ac{fdtm} then give an overview of the results we obtain.

\subsection{The Fermat Distance-to-Measure Metric}

Given a probability measure $\mu$ over $\RR^d$ and two real parameters $m\in(0,1]$ and $p\ge1$, the \ac{dtm}~\cite{chazalGeometricInferenceMeasures2011} is defined for all $x\in\RR^d$ as 
\begin{align*}
    d_\mu(x)
    \eqdef \Lp \frac{1}{m} \int_0^m \delta_{\mu,u}(x)^p\, \d u \Rp^{1/p}
\end{align*}
where
\begin{align*}
    \delta_{\mu,u}(x)
    \eqdef \inf \bLa r>0 \,:\, \mu\bLp\cB(x,r)\bRp > u \bRa
\end{align*}
is the pseudo-\ac{dtm} and represents the minimal radius of a ball centered at $x$ that contains a mass $u$ \ac{wrt} $\mu$.
In this paper, the parameters $m$ and $p$ are fixed constants hence we omit the notational dependency of the \ac{dtm} \ac{wrt} these parameters to avoid heavy notations.
Notice that $\delta_{\mu,u}$ is a finite function everywhere for all $u<1$. Therefore, when $m<1$, $d_\mu$ is well-defined and finite over the whole space $\RR^d$. 
Moreover, it was proven in~\cite{chazalGeometricInferenceMeasures2011} that $d_\mu$ is $1$-Lipschitz, \ie, 
\begin{align}
    \label{eq:dtm_lip}
    |d_\mu(x) - d_\mu(y)|
    \le \|x - y\|
    ~\mbox{for any}~
    x,y \in \RR^d~,
\end{align}
and stable with respect to $\mu$, \ie,
\begin{align}
    \label{eq:dtm_stab_wasserstein}
    \|d_\mu - d_\nu\|_\infty
    \le \frac{W_p(\mu,\nu)}{m^{1/p}}
\end{align}
where $W_p$ denotes the $p$-Wasserstein distance defined by
\begin{align*}
    W_p(\mu,\nu)
    = \inf_{\pi\in\Pi(\mu,\nu)} \Lp \int_{\RR^d \times \RR^d} \|x-y\|^p \pi(\d x,\d y) \Rp^{\frac1p}
\end{align*}
with $\Pi(\mu,\nu)$ the set of probability measures on $\RR^d \times \RR^d$ that have marginals $\mu$ and $\nu$.

The function $\delta_{\mu,u}$ generalizes the distance function to the support $\supp(\mu)$ of $\mu$---the latter being exactly $\delta_{\mu,0}$---and reflects how far $x$ is from a mass $u$ of the measure $\mu$. The \ac{dtm}, obtained by taking the $p$-Hölder mean of $\delta_{\mu,u}$ over $u\in[0,m]$, intuitively behaves like the inverse of a density: $d_\mu(x)$ is large when $x$ belongs to a low-density area or when $x$ is far away from the support of $\mu$, and $d_\mu(x)$ is small when $x$ is in a high-density area.
Precisely, in the case where $\mu$ has a continuous density $f$ \ac{wrt} the Lebesgue measure over $\RR^d$, it has been shown~\cite{biauWeightedKnearestNeighbor2011} that $md_\mu^{-d}$ converges to the density $f$ as $m$ goes to $0$.

These properties motivate the introduction of the \ac{fdtm} between two points as the integral of the \ac{dtm}---possibly elevated at some power greater than $1$---minimized over paths between $x$ and $y$.

\begin{definition}[Fermat distance-to-measure (\ac{fdtm})]
Let $\mu$ be a probability measure over $\RR^d$, $\F$ be a closed subset of $\RR^d$ and $\beta\ge1$ be a power parameter.
Given a rectifiable path $\gamma:[0,1]\to\RR^d$, we call \ac{fdtm} length of the path $\gamma$ the quantity
\begin{align*}
    D_\mu(\gamma)
    \eqdef \int_\gamma d_\mu^\beta
    = \int_0^1 d_\mu(\gamma(t))^\beta \|\dot{\gamma}(t)\| \d t~.
\end{align*}
Given $x,y \in \RR^d$, we call \ac{fdtm} between $x$ and $y$ with measure $\mu$ and domain $\F$ the quantity
\begin{align*}
    D_{\mu,\F}(x,y)
    \eqdef \inf_{\gamma \in \Gamma_\F(x,y)} D_\mu(\gamma)~,
\end{align*}
where $\Gamma_\F(x,y)$ is the set of rectifiable paths $\gamma : [0,1] \to \RR^d$ such that $\gamma(0)=x$, $\gamma(1)=y$ and with the additional constraint that $\gamma$ may leave the domain $\F$ only by drawing straight lines:
Formally, for all open interval $I\subset[0,1]$ such that $\gamma(I) \subset \RR^d\setminus \F$, there exists a unit vector $u\in\RR^d$ such that for all $t\in I$,
\begin{align*}
    \dot\gamma(t)
    = \|\dot\gamma(t)\| \cdot u~.
\end{align*}
\end{definition}

As for $m$ and $p$, $\beta$ is a fixed parameter and we omit the notational dependency \ac{wrt}~$\beta$. In the following, we shall focus on the case where the domain $F$ is chosen as the support of the measure $\mu$, see below for more explanations.
The \ac{fdtm} can be interpreted as a generalization of the Fermat distance. Indeed, the behavior of the latter is expected to be retrieved when choosing $m$ close to $0$ due to the \ac{dtm} approaching the density.

The rectifiable paths $\gamma$ belonging to $\Gamma_\F(x,y)$ are called \emph{admissible paths}.
Recall that a path $\gamma$ is rectifiable if $\|\dot\gamma\|$ is defined almost everywhere on $[0,1]$ and integrable. Its Euclidean length is then denoted by
\begin{align*}
    |\gamma|
    \eqdef \int_0^1 \|\dot\gamma(t)\| \d t
    < \pinfty~.
\end{align*}
The notion of integration along a rectifiable path is well-defined. Moreover, such path may always be re-parameterized as a Lipschitz path.
In the sequel, paths $\gamma$ will often be implicitly assumed to be parameterized with constant Euclidean velocity, \ie, such that $t \mapsto \|\dot\gamma(t)\|$ is constant where it is defined.
Some proofs shall instead consider an arc length parameterization, where $\gamma$ is defined over $[0,|\gamma|]$ and such that $\|\dot\gamma(t)\| = 1$ almost everywhere.
The most basic path in $\Gamma_\F(x,y)$ is the segment $t\mapsto (1-t)x + ty$, which will be written as $[x,y]$ for short. Its \ac{fdtm} length is given by
\begin{align*}
    D_\mu([x,y])
    = \|x-y\| \int_0^1 d_\mu\bLp(1-t)x + ty\bRp^\beta \d t~.
\end{align*}
Note that the straight path is always admissible regardless of $\F$ since its derivative is constant.
More generally, we shall refer as $[x_1, x_2, \dots, x_k]$ to the polygonal path going through $x_1$, $x_2$, \dots, $x_k$ in this order, which is admissible if all intermediate points belong to the domain.

We now discuss a few remarks before stating our contributions.

\medskip\noindent
\textit{Restriction to a compact subset $\K$.}
Throughout the paper, $\K \subset \RR^d$ denotes a compact convex set with diameter $\diam(\K)$.
Without loss of generality and to achieve clearer results, we choose to restrict all objects to be contained within $\K$, that is, support of measures, domains and endpoints.
The main benefit of this assumption is to uniformly upper bound the \ac{dtm} over $\K$ by $\diam(\K)$, which in particular also holds over all admissible paths. Indeed, if the support is in $\K$ then all points in $\K$ are at distance at most $\diam(\K)$ from any point of the support, thus the \ac{dtm} is upper bounded by $\diam(\K)$.
Denote $\cM_\K$ the set of measures with support included in $\K$. Then for any such measure $\mu\in\cM_\K$, domain $\F\subset\K$ and endpoints $x,y\in\K$,
\begin{align}
    \label{eq:dtm_uniform_bound}
    \|d_\mu\|_{\infty,K} \le \diam(\K)
    ~\mbox{and}~
    \|d_\mu\|_{\infty,\gamma} \le \diam(\K)
    ~\mbox{for any}~
    \gamma\in\Gamma_\F(x,y)~.
\end{align}
Furthermore, intermediate results and proofs will often assume that $\diam(\K)=1$ for clarity purposes.
The fact that these assumptions are without loss of generality is discussed in \cref{app:restriction_compact}.
In particular, the implication that $\mu$ has compact support in $\RR^d$ is not very restrictive, as we show in \cref{app:restriction_compact} that given any measure $\mu$ and any compact area of study $A \subset \RR^d$, the restriction of the \ac{fdtm} of $\mu$ over $A$ coincides with the \ac{fdtm} of a ``cropped'' modification of $\mu$ with support included in a larger compact subset $\K$. 

\begin{table}[ht]
    \centering
    \begin{tabular}{c|c}
        \textsc{Notation} & \textsc{Definition} \\
        \hline \\
        $\|\cdot\|$ & Euclidean norm over $\RR^d$ \\
        $\cB(x, r)$ & open ball centered at $x\in\RR^d$ with radius $r$ \\
        $\ball(x, r)$ & closed ball centered at $x\in\RR^d$ with radius $r$ \\
        $\mu, \nu$ & measures over $\RR^d$ \\
        $\supp(\mu)$ & support of the measure $\mu$ \\
        $\F, \G$ & domains for paths \\
        $m$ & fraction of mass considered for the \ac{dtm} \\
        $p$ & exponent of the Hölder mean considered for the \ac{dtm} \\
        $\beta$ & exponent of the integrated \ac{dtm} considered for the \ac{fdtm} \\
        $\K$ & compact convex subset of $\RR^d$ (often with diameter $1$)\\
        $a, b$ & parameters used of the standard assumption on measures \\
        $\sigma$ & lower bound on the \ac{dtm} $d_\mu$ \\
        $\cM_\K$ & set of measures with support included in $\K$ \\
        $\cM_{\K,a,b,\sigma}$ & set of measures satisfying \cref{assum} \\
        $d_\mu$ & \ac{dtm} associated with $\mu$\\
        $\gamma$ & rectifiable path in $\RR^d$ \\
        $|\gamma|$ & Euclidean length of path $\gamma$ \\
        $\|d_\mu\|_{\infty,\gamma}$ & maximal value of the \ac{dtm} reached on the path $\gamma$ \\
        $L_{\mu,\delta}$ & sublevel set of $d_\mu$  below the value $\delta$ \\
        $D_\mu(\gamma)$ & \ac{fdtm} length of the path $\gamma$ \\
        $D_{\mu,\F}(x,y)$ & \ac{fdtm} associated with $\mu$ and $\F$ from $x$ to $y$ \\
        $D_\mu(x,y)$ & Special case when $\F = \supp(\mu)$ \\
        $\Gamma_\F(x,y)$ & Set of rectifiable paths with domain $\F$ from $x$ to $y$ \\
        $\Gamma_{\mu,\F}^\star(x,y)$ & Set of geodesics paths for $\mu$ with domain $\F$ from $x$ to $y$ \\
        $[x_1, x_2, \dots, x_k]$ & polygonal path going through $x_1$, $x_2$, \dots, $x_k$ \\
        $W_p$  & Wasserstein distance of order $p$ \\
        $\dtv$ & Total-Variation distance \\
        $\hausdorff$ & Hausdorff distance between closed subsets of $\RR^d$ \\
        $n$ & Number of sample points \\
        $\XX_n$ & Family of $n$ random samples from a measure $\mu$ \\
        $\hat\mu_n$ & Empirical measure associated with $\XX_n$
    \end{tabular}
    \caption{Notations used throughout the paper}
    \label{tab:notations}
\end{table}

\medskip\noindent
\textit{Lower bounding the \ac{dtm}.}
For $d_\mu$ to vanish at a point $x$, $\mu$ needs to have an atom of mass at least $m$ at~$x$.
Assuming that this does not happen anywhere, $d_\mu$ is positive and, as a continuous and proper function, achieves a positive minimum over $\RR^d$:
\begin{lemma}
\label{lemma:dtm_lower_bound}
Let $\mu$ be a probability measure over $\RR^d$ such that $\mu(\{x\}) < m$ for all $x\in\RR^d$.
Then $\min d_\mu > 0$.
\end{lemma}

\medskip\noindent
\textit{About the role of $\F$.}
The domain $\F \subset \RR^d$ is introduced to achieve both general stability results and interesting convergence rates for the estimation of the \ac{fdtm} in a way that allows to evaluate all admissible paths with reasonable computational complexity independent of the ambient dimension~$d$---see \cref{sec:estimation}.
In this case $\F$ will be chosen to be the support of~$\mu$, so that sampling points according to $\mu$ creates a point cloud that fills the domain and allows to recover all admissible paths.
On the other hand, straight edges outside the domain are allowed in order to improve the stability of the \ac{fdtm} \ac{wrt} the domain, and do not affect the estimation properties by a point cloud as the straight edges can still be approximated.

In general, if $\F\subset \G\subset\RR^d$ then $\Gamma_\F(x,y) \subset \Gamma_\G(x,y)$ for all endpoints $x$ and $y$, and it follows immediately that $D_{\mu,\G}(x,y) \le D_{\mu,\F}(x,y)$.
Moreover, $\bLp \F, D_{\mu,\F} \bRp$ is a metric space (see \cref{app:equivalent_topology}).
In particular, the triangular inequality applies when considering endpoints within $\F$.
However, this is not true when considering general endpoints:
If $x,y\in \F$ and $z\notin \F$, it may happen that
    \[D_\mu(x,z) + D_\mu(z,y)
    \le D_\mu([x,z,y])
    < D_\mu(x,y)\]
if the polygonal path $[x,z,y]$ does not belong to $\Gamma_\F(x,y)$.

\subsection{Overview of the Results}
\label{sec:contributions}

We now summarize our contributions.

\medskip\noindent
\textit{Existence of geodesics and upper bound on their Euclidean length.}
Without domain constraints, the \ac{fdtm} metric can be seen as a continuous conformal transformation of the Euclidean metric via the \ac{dtm} function, and the existence of minimizing geodesics follows from classical arguments (see for instance Proposition 2.5.19 in \cite{buragoCourseMetricGeometry2001}).
The proof is adapted to account for domain constraints in \cref{sec:geodesic_existence}.

\begin{theorem}[Existence of geodesics]
\label{thm:intro:geodesic}
Let $\mu$ be a probability measure over $\RR^d$ and $\F\subset\RR^d$ a closed domain. Then for all $x,y\in\RR^d$, there exists a path $\gamma\in\Gamma_\F(x,y)$ such that
\begin{align*}
    D_{\mu,F}(x,y)
    = D_\mu(\gamma)~.
\end{align*}
We denote $\Gamma_{\mu,\F}^\star(x,y)$ the set of such paths, called (minimizing) geodesics.
\end{theorem}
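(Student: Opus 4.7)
The plan is to invoke the direct method of the calculus of variations: take a minimizing sequence $(\gamma_n)_n \subset \Gamma_\F(x,y)$ with $D_\mu(\gamma_n) \to D_\mu(x,y)$, each reparameterized with constant Euclidean speed on $[0,1]$, extract a uniformly convergent subsequence via Arzelà--Ascoli, and then verify both admissibility and optimality of the limit.

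For the compactness step, first note that $d_\mu(z) \ge d(z, \supp(\mu))$ holds trivially, so any path wandering far from $\K$ has $D_\mu$-length exceeding the straight-line upper bound $D_\mu([x,y])$; hence the $\gamma_n$ may be assumed to take values in a fixed compact set $\K'\supset\K$. Under the hypothesis that $\mu$ has no atom of mass $\ge m$, \cref{lemma:dtm_lower_bound} provides a uniform lower bound $d_\mu \ge \sigma > 0$ on $\K'$, whence $|\gamma_n| \le D_\mu(\gamma_n)/\sigma^\beta$. The constant Euclidean speeds $\|\dot\gamma_n\| = |\gamma_n|$ are therefore uniformly bounded, $(\gamma_n)_n$ is equicontinuous, and Arzelà--Ascoli yields a subsequence converging uniformly to some $\gamma \colon [0,1] \to \K'$ with $\gamma(0)=x$ and $\gamma(1)=y$. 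The degenerate case of finitely many atoms of mass $\ge m$ can be handled separately, by an approximation or small-detour argument around these isolated singular points.

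The main obstacle, relative to the unconstrained conformal metric case treated in \cite[Lemma 1.12]{gromov1999metric}, is verifying that the limit $\gamma$ still satisfies the straight-line-outside-$\F$ condition defining $\Gamma_\F(x,y)$, which is not automatically preserved under uniform convergence. Let $(a,b) \subset [0,1]$ be a maximal open interval with $\gamma((a,b)) \subset \RR^d \setminus \F$. For any compact $[a',b'] \subset (a,b)$, the image $\gamma([a',b'])$ is a compact subset of the open set $\RR^d \setminus \F$, so by uniform convergence $\gamma_n([a',b']) \subset \RR^d \setminus \F$ for $n$ large; connectedness then forces $\gamma_n|_{[a',b']}$ to sit inside a single maximal straight portion of $\gamma_n$, with tangent direction $u_n$ determined by the endpoints $\gamma_n(a'), \gamma_n(b')$. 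Exhausting $(a,b)$ by such subintervals, one checks that $u_n$ may be chosen independently of $[a',b']$, and a further subsequence $u_n \to u$ equips $\gamma|_{(a,b)}$ with the required constant tangent direction. Combined with the classical lower semi-continuity of length in a continuous conformal metric, $D_\mu(\gamma) \le \liminf_n D_\mu(\gamma_n) = D_\mu(x,y)$, this shows that $\gamma$ is an admissible minimizing path, so $\Gamma^\star_{\mu,\F}(x,y) \ne \emptyset$.
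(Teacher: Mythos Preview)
Your overall strategy matches the paper's: take a minimizing sequence, extract a uniform limit, verify admissibility of the limit via the same compact-subinterval argument outside $\F$, and conclude by lower semi-continuity of the length functional. The admissibility step is essentially identical to the paper's.

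The genuine gap is in your compactness step. You bound the Euclidean lengths $|\gamma_n| \le D_\mu(\gamma_n)/\sigma^\beta$ by invoking $d_\mu \ge \sigma > 0$, which forces you to add the hypothesis that $\mu$ has no atom of mass $\ge m$ --- a hypothesis absent from the theorem. Your proposed patch (``approximation or small-detour around isolated singular points'') is not a proof: near a zero $z_0$ of $d_\mu$, a minimizing sequence can oscillate with arbitrarily large Euclidean length while keeping $D_\mu(\gamma_n)$ small, so no Euclidean Arzel\`a--Ascoli bound is available there, and it is unclear what a detour argument would even mean. The paper avoids the issue entirely by running the compactness argument in the \emph{FDTM metric} rather than the Euclidean one: the $D_\mu$-lengths of the $\gamma_n$ are uniformly bounded by construction, the FDTM and Euclidean topologies coincide on the compact ambient set (see \cref{app:equivalent_topology}), and \cite[Theorem~2.5.14]{burago2001course} then yields a uniformly convergent subsequence with no lower bound on $d_\mu$ required.

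A smaller omission: you do not treat endpoints $x\notin\F$ or $y\notin\F$. The paper handles this separately by observing that any admissible path from such an $x$ begins with a segment $[x,z]$ to some $z\in\F$, so $D_{\mu,\F}(x,y) = \min_{z\in\F}\bigl(D_\mu([x,z]) + D_{\mu,\F}(z,y)\bigr)$ is the minimum of a continuous function over the compact set $\F$.
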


Notice that \cref{thm:intro:geodesic} holds regardless of the measure $\mu$ or the domain $\F$.
In particular, the existence of geodesics holds for all measures in $\RR^d$, including those without compact support. This is tied with the fact that geodesics may not escape a large ball around the endpoints due to the regularity properties of the \ac{dtm}---see \cref{corol:euclidean_bound_local}---regardless of the measure.
This observation contrasts with the Fermat distances, for which one may construct a measure whose support is made of a family of disjoint strips from $x$ to $y$ that make increasingly longer detours while being thinner but with higher density. More precisely, consider a family of strips $(s_k)_k$ with respective lengths $(L_k)_k$, thickness $(h_k)_k$ and density $(a_k)_k$ such that
\begin{itemize}
    \item $L_k \to \pinfty$, $h_k\to0$, $a_k\to\pinfty$.
    \item The series with general term $L_k h_k a_k$ converges to $1$, ensuring that the underlying measure is a probability. 
    \item $L_k a_k^{-\beta}$ is decreasing, ensuring that each strip is shorter in the sense of \cref{eq:fermat} despite being longer Euclidean-wise.
\end{itemize}
In this example each strip $s_k$ yields a lower Fermat distance between $x$ and $y$ than the previous one, so that there are no geodesics for the Fermat metric.

In addition to the existence of geodesics, the $1$-Lipschitz property of the \ac{dtm} (\crefnopar{eq:dtm_lip}) and the fact that it cannot take small values on a subset that is too large---see \cref{lemma:dtm_covering}---allow to upper bound their Euclidean length, regardless of the underlying measure and domain.
This upper bound then plays a fundamental role in establishing stability properties of the \ac{fdtm}.

\begin{theorem}[Upper bound on the Euclidean length of geodesics]
\label{thm:intro:euclidean_bound}
Let $\mu$ be a probability measure over $\RR^d$ and $\F\subset\RR^d$ a closed domain.
Then for all geodesic $\gamma\in\Gamma_{\mu,\F}^\star(x,y)$,
\begin{align}
    \label{eq:thm:euclidean_bound}
    |\gamma|
    \lesssim \|d_\mu\|_{\infty,\gamma}
\end{align}
where $\lesssim$ hides a multiplicative constant---made explicit in the proofs---only depending on $m$ and $\beta$.
\end{theorem}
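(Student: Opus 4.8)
The plan is to decompose the geodesic according to dyadic scales of the \ac{dtm} and to control, at each scale, the Euclidean length that $\gamma$ can spend inside the corresponding sublevel set of $d_\mu$. Re-parametrize $\gamma$ by arc length on $[0,\ell]$ with $\ell:=|\gamma|$, write $g(t):=d_\mu(\gamma(t))$ and $M:=\|d_\mu\|_{\infty,\gamma}=\max_{[0,\ell]}g$; by \cref{eq:dtm_lip}, $g$ is $1$-Lipschitz with $0\le g\le M$. We may assume $M>0$, since $M=0$ forces $d_\mu\equiv0$ on the connected set $\gamma([0,\ell])$, hence $\gamma$ is a single point because $\{d_\mu=0\}$ is finite. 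For $\lambda>0$ put $L_{\mu,\lambda}:=\{z:d_\mu(z)\le\lambda\}$ and let $h(\lambda):=\bigl|\{t\in[0,\ell]:g(t)\le\lambda\}\bigr|$ be the Euclidean length of the portion of $\gamma$ lying in $L_{\mu,\lambda}$; then $h$ is nondecreasing, $h(M)=\ell$, and $h(0^+)=0$ (the set $\{g=0\}$ is the $\gamma$-preimage of the finite set $\{d_\mu=0\}$, hence Lebesgue-null since $\|\dot\gamma\|=1$ a.e.).

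\emph{The $D_\mu$-cost of $\gamma$ inside a sublevel set.} The key estimate is $D_\mu(\gamma\cap L_{\mu,\lambda})\le C_0\,\lambda^{1+\beta}$, where $C_0$ depends only on $m$ and $\beta$. Indeed, \cref{lemma:dtm_covering} (the \ac{dtm} cannot be small on a large set) gives that $L_{\mu,\lambda}$ is covered by at most $N$ balls of radius $O(\lambda)$ with $N$ depending only on $m$; running the standard nerve argument over each connected component (a ball being connected) shows that $L_{\mu,\lambda}$ has $O(1/m)$ connected components, each of diameter $O(\lambda)$. Decompose $\gamma\cap L_{\mu,\lambda}$ into its maximal sub-arcs $\gamma|_{[a_j,b_j]}$. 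Each such sub-arc is again a minimizing geodesic between its endpoints (a sub-arc of a geodesic is a geodesic — this needs care because of the domain constraint $\F$, see \cref{sec:geodesic&stability}), it is connected and contained in $L_{\mu,\lambda}$, hence in a single component $C_j$, so $\|\gamma(a_j)-\gamma(b_j)\|\le\diam(C_j)=O(\lambda)$. Comparing with the segment $[\gamma(a_j),\gamma(b_j)]$ — which is always admissible — and bounding $d_\mu$ on it by $\lambda+\diam(C_j)=O(\lambda)$ through \cref{eq:dtm_lip},
\begin{align*}
    D_\mu\bigl(\gamma|_{[a_j,b_j]}\bigr)\le D_\mu\bigl([\gamma(a_j),\gamma(b_j)]\bigr)\le\|\gamma(a_j)-\gamma(b_j)\|\cdot\|d_\mu\|_{\infty,\,[\gamma(a_j),\gamma(b_j)]}^{\,\beta}=O\bigl(\lambda^{1+\beta}\bigr);
\end{align*}
summing over the $O(1/m)$ sub-arcs proves the estimate.

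\emph{Dyadic summation.} On $\{t:\lambda/2<g(t)\le\lambda\}$ we have $g^\beta>(\lambda/2)^\beta$, so by the key estimate
\begin{align*}
    h(\lambda)-h(\lambda/2)\le\Bigl(\tfrac{2}{\lambda}\Bigr)^{\!\beta}\int_{\{\lambda/2<g\le\lambda\}}g^\beta\,\d t\le\Bigl(\tfrac{2}{\lambda}\Bigr)^{\!\beta}D_\mu\bigl(\gamma\cap L_{\mu,\lambda}\bigr)\le 2^\beta C_0\,\lambda.
\end{align*}
Taking $\lambda=M/2^k$, $k\ge0$, and telescoping,
\begin{align*}
    |\gamma|=h(M)=\sum_{k\ge0}\Bigl(h(M/2^k)-h(M/2^{k+1})\Bigr)+h(0^+)\le 2^\beta C_0\sum_{k\ge0}\frac{M}{2^k}=2^{\beta+1}C_0\,M,
\end{align*}
which is \cref{eq:thm:euclidean_bound} with the explicit constant $2^{\beta+1}C_0$ depending only on $m$ and $\beta$.

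\emph{Main obstacle.} The delicate point, implicit in the first step, is the bound — uniform in $\lambda$ — on the number of maximal sub-arcs of $\gamma$ inside $L_{\mu,\lambda}$ by a constant depending only on $m$; without it the series of the dyadic summation diverges. This is where \cref{lemma:dtm_covering} is essential twice, controlling both the number of connected components of $L_{\mu,\lambda}$ and their diameters by $O(\lambda)$, and it must be combined with the minimality of $\gamma$: an excursion of $\gamma$ that leaves $L_{\mu,\lambda}$ and returns to the same component has $D_\mu$-cost at least $\lambda^\beta$ times its Euclidean length, but at most the $O(\lambda^{1+\beta})$ cost of the connecting segment, hence it is short and stays within a ball of radius $O(\lambda)$ about that component, so it can be absorbed. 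Making this count precise, together with the domain-constraint technicalities already flagged (admissibility of paths under cutting and pasting, and the restriction of a geodesic to a subinterval), is the technical core of the argument and is carried out in \cref{sec:geodesic&stability} and the appendices.
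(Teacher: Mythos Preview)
Your overall architecture matches the paper's: prove the key estimate $D_\mu(\gamma\cap L_{\mu,\lambda})\le C_0\lambda^{1+\beta}$ for every $\lambda>0$, then telescope over dyadic scales $\lambda=M/2^k$. The gap is in your derivation of the key estimate. You assert that $L_{\mu,\lambda}$ has $O(1/m)$ connected components, each of diameter $O(\lambda)$; but \cref{lemma:dtm_covering} only gives a covering by $2/m$ balls of radius $4\lambda$, and the nerve argument bounds the components of the \emph{union of those balls}, not of $L_{\mu,\lambda}$ itself, which may well have infinitely many. Even granting a component bound, you still need to bound the number of maximal sub-arcs of $\gamma$ inside $L_{\mu,\lambda}$: the geodesic can exit and re-enter a fixed component arbitrarily often, and your ``Main obstacle'' paragraph --- each excursion is short, hence ``can be absorbed'' --- never converts this into a finite count. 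A repair in the unconstrained case $\F=\RR^d$ is to work instead with clusters (connected components of the union of covering balls) and, for each cluster $C$, compare the entire arc of $\gamma$ between its \emph{first} and \emph{last} visit to $C$ against the chord; summing over the at most $2/m$ clusters yields the estimate. Under domain constraints this repair breaks down: those first/last visits need not land in $\F$, so splicing in the chord can produce an inadmissible competitor, and ``a sub-arc of a geodesic is a geodesic'' is exactly where admissibility fails.

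The paper sidesteps the sub-arc count altogether. It builds one explicit competitor $\tilde\gamma\in\Gamma_\F(x,y)$ (\cref{lemma:geodesic:gamma_modif_main}): the sub-$\delta$ portion of $\gamma$ is rerouted as a polygonal path through the covering centres, and any loops at those centres are deleted, so that each centre is visited at most once and the rerouted piece $\tilde\eta$ automatically satisfies $|\tilde\eta|\le c_1\delta$ and $D_\mu(\tilde\eta)\le c_2\delta^{1+\beta}$ --- no component bound and no excursion count needed. Comparing $D_\mu(\gamma)\le D_\mu(\tilde\gamma)$ and subtracting the shared super-$\delta$ part gives $D_\mu([\gamma]^\delta)\le c_2\delta^{1+\beta}$ directly, after which the dyadic summation is identical to yours. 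For general $\F$ the construction is refined so that all splice points lie in $\F$ and long outer segments are handled separately (the $\chi$ term in \cref{lemma:geodesic:gamma_modif_main}); this is precisely the technical core that your proposal defers.
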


\begin{corollary}
\label{corol:euclidean_bound_uniform}
Assume that $\supp(\mu) \subset \K$, $\F \subset \K$, $x,y\in\K$ and let $\gamma\in\Gamma_{\mu,\F}^\star(x,y)$.
Then
\begin{align*}
    |\gamma|
    \lesssim \diam(\K)
\end{align*}
where $\lesssim$ hides the same multiplicative constant as in \cref{eq:thm:euclidean_bound}.
\end{corollary}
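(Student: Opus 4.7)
The plan is that this corollary follows immediately by combining \cref{thm:intro:euclidean_bound} with the uniform bound on the DTM over $\K$ already recorded in \cref{eq:dtm_uniform_bound}. Under the assumptions $\supp(\mu)\subset\K$, $\F\subset\K$, and $x,y\in\K$, the theorem yields $|\gamma|\lesssim\|d_\mu\|_{\infty,\gamma}$ for any geodesic $\gamma$, while \cref{eq:dtm_uniform_bound} gives $\|d_\mu\|_{\infty,\gamma}\le\diam(\K)$ for any admissible path contained in $\K$. Chaining these two inequalities produces the claim with the same hidden multiplicative constant, depending only on $m$ and $\beta$.

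The one piece of substance worth spelling out is the uniform bound $\|d_\mu\|_{\infty,\K}\le\diam(\K)$ itself. For any $z\in\K$, the ball $\cB(z,\diam(\K))$ contains $\K$ and hence $\supp(\mu)$, so $\mu(\cB(z,\diam(\K)))=1>u$ for every $u\in(0,1)$. This forces $\delta_{\mu,u}(z)\le\diam(\K)$ for all $u<1$, and the $p$-Hölder mean over $u\in(0,m)$ inherits the same bound, giving $d_\mu(z)\le\diam(\K)$. Consequently, the bound is transmitted along any path $\gamma\subset\K$.

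The remaining small check is that the geodesic $\gamma$ itself is contained in $\K$. This is built into the paper's convention discussed in \cref{app:restriction_compact}, but it also follows directly from convexity of $\K$: portions of $\gamma$ lying inside $\F$ are contained in $\F\subset\K$, and each straight-line excursion of $\gamma$ outside $\F$ has its two endpoints either on $\partial\F\subset\K$ or among $\{x,y\}\subset\K$, so by convexity of $\K$ the excursion is itself contained in $\K$. I do not foresee any real obstacle: both ingredients are essentially already in place, and the corollary is a one-line combination of them.
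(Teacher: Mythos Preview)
Your proposal is correct and matches the paper's own argument exactly: the paper simply states that \cref{corol:euclidean_bound_uniform} is a direct consequence of \cref{eq:thm:euclidean_bound} and \cref{eq:dtm_uniform_bound}. Your additional justification that the geodesic remains in $\K$ (via convexity and the structure of admissible paths) and the explicit derivation of $\|d_\mu\|_{\infty,\K}\le\diam(\K)$ are accurate elaborations of points the paper records elsewhere.
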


\begin{corollary}
\label{corol:euclidean_bound_local}
Let $\gamma\in\Gamma_{\mu,\F}^\star(x,y)$.
Then
\begin{align*}
    |\gamma|
    \lesssim \|x-y\|
\end{align*}
where $\lesssim$ hides a multiplicative constant depending on $\beta$ and the constant from \cref{eq:thm:euclidean_bound}.
\end{corollary}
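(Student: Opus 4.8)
The plan is to bootstrap Theorem~\ref{thm:intro:euclidean_bound} with an a~priori bound on $\|d_\mu\|_{\infty,\gamma}$ in terms of the \ac{dtm} at the endpoints and of $\|x-y\|$, and then to split according to the size of $\max(d_\mu(x),d_\mu(y))$ relative to $\|x-y\|$. Assume $x\neq y$ (otherwise $|\gamma|=0$), write $a=d_\mu(x)$, $b=d_\mu(y)$ with $a\le b$ without loss of generality, and set $\ell=\|x-y\|$; by \cref{eq:dtm_lip} the function $d_\mu$ is $1$-Lipschitz, so $b\le a+\ell$, and $|\gamma|\ge\ell$. Since $[x,y]$ is a single straight segment it lies in $\Gamma_\F(x,y)$ for every $\F$, hence $D_\mu(\gamma)\le D_\mu([x,y])$.

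First I would show $\|d_\mu\|_{\infty,\gamma}\le M_0:=\frac{a+b+\ell}{2}$. Let $z^\star\in\gamma$ attain $M:=\|d_\mu\|_{\infty,\gamma}=d_\mu(z^\star)$ and cut $\gamma$ at $z^\star$. On the piece from $x$ to $z^\star$, parametrized by arc length $r$ from $z^\star$, $1$-Lipschitzness gives $d_\mu\ge M-r$, and this piece has length $\ge\|x-z^\star\|\ge M-a$; hence $D_\mu(\gamma|_{x\to z^\star})\ge\int_0^{M-a}(M-r)^\beta\,\mathrm{d}r=\frac{M^{\beta+1}-a^{\beta+1}}{\beta+1}$, and symmetrically for the other piece, so $D_\mu(\gamma)\ge\frac{2M^{\beta+1}-a^{\beta+1}-b^{\beta+1}}{\beta+1}$. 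For the upper bound, $1$-Lipschitzness from both endpoints gives $d_\mu((1-t)x+ty)\le\min(a+t\ell,b+(1-t)\ell)$, and integrating (the two linear pieces meet at the common value $M_0$) yields $D_\mu([x,y])\le\ell\int_0^1\min(a+t\ell,b+(1-t)\ell)^\beta\,\mathrm{d}t=\frac{2M_0^{\beta+1}-a^{\beta+1}-b^{\beta+1}}{\beta+1}$. Combined with $D_\mu(\gamma)\le D_\mu([x,y])$ this forces $M\le M_0$.

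Then I would run the dichotomy. If $|\gamma|<b/2$, $1$-Lipschitzness from $y$ gives $d_\mu\ge b-|\gamma|>b/2$ on all of $\gamma$, so $D_\mu(\gamma)\ge|\gamma|(b/2)^\beta$; as $\ell\le|\gamma|<b/2$ we get $M_0\le\tfrac32 b$, hence $D_\mu(\gamma)\le\ell M_0^\beta\le\ell(\tfrac32 b)^\beta$, giving $|\gamma|\le 3^\beta\ell$. If $|\gamma|\ge b/2$, look at the last $b/2$ units of arc length of $\gamma$: there $d_\mu\ge b/2$, so $D_\mu(\gamma)\ge(b/2)^{\beta+1}$, while $D_\mu(\gamma)\le\ell M_0^\beta\le\ell(b+\ell)^\beta$; an elementary manipulation (trivial if $b<\ell$) gives $b\le 2^{2\beta+1}\ell$, whence by the previous step $\|d_\mu\|_{\infty,\gamma}\le M_0\le b+\tfrac\ell2\le(2^{2\beta+1}+\tfrac12)\ell$, and Theorem~\ref{thm:intro:euclidean_bound} yields $|\gamma|\le C(2^{2\beta+1}+\tfrac12)\ell$ with $C$ the constant of \cref{eq:thm:euclidean_bound}. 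In all cases $|\gamma|\le\max(3^\beta,\,C(2^{2\beta+1}+\tfrac12))\,\|x-y\|$.

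The delicate point is that bounding $\|d_\mu\|_{\infty,\gamma}$ and invoking Theorem~\ref{thm:intro:euclidean_bound} is not enough on its own: if $x,y$ are close but far from $\supp(\mu)$, a geodesic stays near $[x,y]$ with length $\asymp\|x-y\|$, yet $\|d_\mu\|_{\infty,\gamma}\asymp d_\mu(x)\gg\|x-y\|$, so that bound is far from tight. The dichotomy is what absorbs the endpoint-\ac{dtm} term — when $d_\mu$ is large at an endpoint, the integrand $d_\mu^\beta$ is already so costly that the geodesic cannot be much longer than the segment — while the a~priori bound on $\|d_\mu\|_{\infty,\gamma}$ closes the complementary regime. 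The only real computation is the integral identity for $D_\mu([x,y])$, which is routine.
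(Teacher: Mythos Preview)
Your proof is correct and follows essentially the same two-step strategy as the paper: first bound $\|d_\mu\|_{\infty,\gamma}$ in terms of the endpoint values of $d_\mu$ and $\|x-y\|$ (the paper's \cref{lemma:geodesic_domain_delta}), then run a dichotomy that in the ``large endpoint-\ac{dtm}'' regime bounds $|\gamma|$ directly from $D_\mu(\gamma)\ge|\gamma|(b/2)^\beta$, and in the complementary regime feeds the bound on $\|d_\mu\|_{\infty,\gamma}$ into \cref{thm:intro:euclidean_bound}. Your Step~1 is slightly sharper than the paper's (you obtain $\|d_\mu\|_{\infty,\gamma}\le\frac{a+b+\ell}{2}$ by using the Lipschitz bound from \emph{both} endpoints in the upper estimate of $D_\mu([x,y])$, whereas the paper only uses one and gets $\max(a,b)+\ell$), and your case split is on $|\gamma|$ versus $b/2$ rather than on $b$ versus a multiple of $\ell$, but these are cosmetic variations on the same argument.
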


The reasoning to derive \cref{thm:intro:euclidean_bound} is described in \cref{sec:geodesic_length}.
\Cref{corol:euclidean_bound_uniform} is a direct consequence of \cref{eq:thm:euclidean_bound,eq:dtm_uniform_bound}.
As for \cref{corol:euclidean_bound_local}, it is also a direct consequence of \cref{eq:thm:euclidean_bound} if $\|d_\mu\|_{\infty,\gamma}$ is smaller than $\|x-y\|$. Else, it is still possible to obtain the domination by the Euclidean distance by simpler means, which is detailed in \cref{app:euclidean_bound_local}.

Recall that under mild assumptions on the measure $\mu$, the \ac{dtm} is lower bounded by a positive quantity (see \cref{lemma:dtm_lower_bound}).
In this special case the Euclidean length of geodesics can be upper bounded in a straightforward manner, albeit with a dependency in the \ac{dtm} lower bound.
\Cref{prop:euclidean_bound_min_dtm} is also proven in \cref{sec:geodesic_length}.
\begin{proposition}
\label{prop:euclidean_bound_min_dtm}
Assume that $\min d_\mu>0$ and let $\gamma\in\Gamma_{\mu,\F}^\star(x,y)$.
Then
\begin{align*}
    |\gamma|
    \le \Lp\frac{\max_{[x,y]}d_\mu}{\min d_\mu}\Rp^\beta \|x-y\|~.
\end{align*}
Under the same assumptions as in \cref{corol:euclidean_bound_uniform}, this further implies that
\begin{align*}
    |\gamma|
    \le \Lp\frac{\diam(\K)}{\min d_\mu}\Rp^\beta \diam(\K)~.
\end{align*}
\end{proposition}

\medskip\noindent
\textit{Stability of the \ac{fdtm}.}
Beyond the fact that the \ac{fdtm} metric is defined for any probability measure on $\RR^d$, one of the main motivations to introduce it as a variant of the Fermat distance is that it turns out to be more stable \ac{wrt} the measure.
This property follows from \cref{thm:intro:euclidean_bound} and the Wasserstein stability of the \ac{dtm} (\crefnopar{eq:dtm_stab_wasserstein}) and is proven in \cref{sec:stability_dtm}.

\begin{theorem}[\ac{fdtm} stability \ac{wrt} the measure]
\label{thm:intro:fdtm_stab_dtm}
Let $\mu,\nu \in \cM_\K$ and $\F \subset \K$ a closed domain.
Then
\begin{align*}
    \bvv D_{\mu,\F} - D_{\nu,\F} \bvv_{\infty,\K}
    \lesssim \diam(\K)^\beta~W_p(\mu,\nu)
\end{align*}
where $\lesssim$ hides a multiplicative constant---made explicit in the proofs---only depending on $m$ and $\beta$.
\end{theorem}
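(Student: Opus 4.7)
The plan is to compare $D_{\mu,\F}(x,y)$ and $D_{\nu,\F}(x,y)$ by using a geodesic for one measure as a competing path for the other. Fix $x,y\in\K$ and assume without loss of generality that $D_{\nu,\F}(x,y)\ge D_{\mu,\F}(x,y)$ (the argument is symmetric in $\mu,\nu$). By \cref{thm:intro:geodesic}, pick a geodesic $\gamma^\star\in\Gamma_{\mu,\F}^\star(x,y)$. Since the admissibility condition defining $\Gamma_\F(x,y)$ depends only on $\F$ and not on the measure, $\gamma^\star$ is a valid competitor for the infimum defining $D_{\nu,\F}(x,y)$, so
\begin{align*}
    0 \;\le\; D_{\nu,\F}(x,y)-D_{\mu,\F}(x,y)
    \;\le\; D_\nu(\gamma^\star)-D_\mu(\gamma^\star)
    \;=\; \int_0^1 \bigl(d_\nu^\beta-d_\mu^\beta\bigr)(\gamma^\star(t))\,\|\dot\gamma^\star(t)\|\,\d t.
\end{align*}

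Next I would linearize the power $\beta$ using the mean value theorem together with the uniform bound \cref{eq:dtm_uniform_bound}: for any $z\in\K$,
\begin{align*}
    \bigl|d_\nu(z)^\beta-d_\mu(z)^\beta\bigr|
    \;\le\; \beta\,\max\bigl(d_\mu(z),d_\nu(z)\bigr)^{\beta-1}\bigl|d_\mu(z)-d_\nu(z)\bigr|
    \;\le\; \beta\,\diam(\K)^{\beta-1}\,\|d_\mu-d_\nu\|_{\infty,\K}.
\end{align*}
Since $\gamma^\star$ stays inside a region on which $d_\mu,d_\nu\le\diam(\K)$ (the geodesic needs only be evaluated at points of $\K$ up to convex hull considerations; if one is worried about straight-line excursions outside $\K$, one first reduces to the case where $\F\subset\K$ and notes that the integrand is controlled by the DTM values which remain bounded by $\diam(\K)$ on $\K$ and grow linearly outside, but by \cref{corol:euclidean_bound_uniform} the geodesic does not need to leave a neighborhood of $\K$). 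Then the Wasserstein stability \cref{eq:dtm_stab_wasserstein} yields $\|d_\mu-d_\nu\|_{\infty,\K}\le W_p(\mu,\nu)/m^{1/p}$, and
\begin{align*}
    D_{\nu,\F}(x,y)-D_{\mu,\F}(x,y)
    \;\le\; \beta\,\diam(\K)^{\beta-1}\,\frac{W_p(\mu,\nu)}{m^{1/p}}\,|\gamma^\star|.
\end{align*}

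The crucial ingredient that makes the argument go through is now \cref{corol:euclidean_bound_uniform}, which gives $|\gamma^\star|\lesssim\diam(\K)$ with a constant depending only on $m$ and $\beta$. Substituting this in produces the claimed bound $\bigl\|D_{\mu,\F}-D_{\nu,\F}\bigr\|_{\infty,\K}\lesssim\diam(\K)^\beta W_p(\mu,\nu)$ with a constant of the form $\beta/m^{1/p}$ times the constant of \cref{corol:euclidean_bound_uniform}. The main obstacle is not any single estimate, all of which are essentially applications of previously established facts, but rather the bookkeeping: one has to be careful that a geodesic for $\mu$ is admissible when testing against $\nu$ (this is why the definition of $\Gamma_\F$ is measure-free), and one has to rely on the nontrivial \cref{thm:intro:euclidean_bound} for the Euclidean length bound, since otherwise the factor $|\gamma^\star|$ could be arbitrarily large and the stability estimate would degenerate — this is precisely the failure mode of the classical Fermat distance that the FDTM is designed to avoid.
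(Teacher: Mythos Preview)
Your proof is correct and follows essentially the same route as the paper: take a geodesic for $\mu$, use it as a competitor for $\nu$, bound $|d_\mu^\beta-d_\nu^\beta|$ via the Lipschitz property of $t\mapsto t^\beta$ on $[0,\diam(\K)]$ together with \cref{eq:dtm_stab_wasserstein}, and control the Euclidean length of the geodesic by \cref{corol:euclidean_bound_uniform}. Your parenthetical worry about excursions outside $\K$ is unnecessary: as noted in \cref{eq:dtm_uniform_bound}, any admissible path with $\F\subset\K$ and endpoints in $\K$ lies in the convex hull of $\F\cup\{x,y\}\subset\K$, so the bound $d_\mu,d_\nu\le\diam(\K)$ holds along the whole geodesic automatically.
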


Likewise, the \ac{fdtm} is stable \ac{wrt} the domain. This property is obtained by approximating a geodesic for the first domain using a path that is admissible for the second domain---see \cref{sec:stability_domain}.

\begin{theorem}[\ac{fdtm} stability \ac{wrt} the domain]
\label{thm:intro:fdtm_stab_domain}
Let $\mu \in \cM_\K$ and $\F,\G \subset \K$ two closed domains.
Then
\begin{align*}
    \bvv D_{\mu,\F} - D_{\mu,\G} \bvv_{\infty,\K}
    \lesssim \diam(\K)^{\beta+\frac12} \sqrt{\hausdorff(\F,\G)}
\end{align*}
where $\lesssim$ hides a multiplicative constant---made explicit in the proofs---only depending on $m$ and $\beta$ and $\hausdorff(\F,\G)$ denotes the Hausdorff distance between $\F$ and $\G$ defined as
\begin{align*}
    \hausdorff(\F,\G)
    \eqdef \max \bLp \|d(\cdot,\G)\|_{\infty,\F} , \|d(\cdot,\F)\|_{\infty,\G} \bRp~.
\end{align*}
\end{theorem}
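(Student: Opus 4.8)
The plan is to prove the inequality in one direction — bounding $D_{\mu,\G}(x,y) - D_{\mu,\F}(x,y)$ from above by the claimed quantity — since the statement is symmetric in $\F$ and $\G$ and the bound $\hausdorff(\F,\G)$ is too. Fix endpoints $x,y\in\K$ and let $\gamma \in \Gamma_{\mu,\F}^\star(x,y)$ be a minimizing geodesic for the domain $\F$, which exists by Theorem \ref{thm:intro:geodesic}. The goal is to build from $\gamma$ an admissible path $\tilde\gamma \in \Gamma_\G(x,y)$ whose \ac{fdtm} cost exceeds $D_\mu(\gamma) = D_{\mu,\F}(x,y)$ by at most $O\!\left(\diam(\K)^{\beta+1/2}\sqrt{\hausdorff(\F,\G)}\right)$; then $D_{\mu,\G}(x,y) \le D_\mu(\tilde\gamma)$ gives the result. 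Write $\eta \triangleq \hausdorff(\F,\G)$, so that every point of $\F$ is within distance $\eta$ of $\G$.

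The construction: $\gamma$ decomposes into a part lying in $\F$ and countably many maximal open sub-arcs on which $\gamma$ travels along straight line segments outside $\F$ (by the admissibility constraint). The straight segments are unproblematic — a straight segment outside $\F$ is automatically a straight segment outside $\G$ only up to the parts of it that happen to lie in $\G$, so some care is needed, but morally the "bad" region is the portion of $\gamma$ inside $\F\setminus\G$. On that portion we must reroute $\tilde\gamma$ so that it either stays in $\G$ or exits $\G$ along straight lines. The natural idea is: wherever $\gamma(t) \in \F$, project $\gamma(t)$ onto $\G$ to obtain a nearby point $\pi(\gamma(t)) \in \G$ at distance $\le \eta$; using the reparametrization/covering arguments behind Theorem \ref{thm:intro:euclidean_bound}, $\gamma$ has Euclidean length $|\gamma| \lesssim \diam(\K)$ (Corollary \ref{corol:euclidean_bound_uniform}), and one can cover $\gamma$ by $\sim |\gamma|/\eta$ pieces of length $\sim\eta$, replacing each piece by a detour into $\G$ of length $O(\eta)$ and reconnecting with short straight hops of length $O(\eta)$. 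The total added Euclidean length is then $O(|\gamma|) = O(\diam(\K))$, uniformly — but that would only give a bound linear in $\diam(\K)$, not the $\sqrt\eta$ gain. The source of the $\sqrt\eta$ is more delicate: one chooses the granularity of the rerouting as a free parameter $\varepsilon$, so that the detour pieces have length $O(\varepsilon)$ and there are $O(|\gamma|/\varepsilon)$ of them, each contributing an extra cost $\lesssim \diam(\K)^\beta\big((\varepsilon + \eta) \text{ extra length}\big)$; optimizing the trade-off between "$\varepsilon$ too large makes detours expensive" and "$\varepsilon$ too small makes the reconnection hops, which must jump a gap of size $\sim\eta$, numerous" yields $\varepsilon \sim \sqrt{\eta\cdot\diam(\K)}$ and hence the exponent $\beta + 1/2$. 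Throughout, one bounds $d_\mu^\beta \le \diam(\K)^\beta$ on every admissible path via \cref{eq:dtm_uniform_bound}, so the extra cost is always (extra length) $\times \diam(\K)^\beta$.

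The key steps, in order: (1) reduce to bounding one direction by symmetry; (2) take a geodesic $\gamma$ for $\F$ and invoke Corollary \ref{corol:euclidean_bound_uniform} to control $|\gamma| \lesssim \diam(\K)$; (3) partition $[0,1]$ into consecutive arcs of Euclidean length $\sim\varepsilon$ (for a parameter $\varepsilon$ to be chosen), classify each arc as "inside $\G$" (keep it) or "not", and on the latter replace the arc by: a straight hop of length $\le\eta$ onto $\G$, a path of length $O(\varepsilon)$ inside (or straight outside) $\G$ tracking the original arc, and a straight hop of length $\le\eta$ back; (4) check $\tilde\gamma$ is admissible for $\G$ — this requires that whenever $\tilde\gamma$ leaves $\G$ it does so along a single straight line, which holds by construction since all out-of-$\G$ pieces are the straight hops or inherited straight segments of $\gamma$; (5) estimate $D_\mu(\tilde\gamma) - D_\mu(\gamma) \lesssim \diam(\K)^\beta \cdot \big(\#\text{arcs}\big)\cdot\eta \lesssim \diam(\K)^\beta \cdot \frac{\diam(\K)}{\varepsilon}\cdot\eta$, plus the cost of the length-$O(\varepsilon)$ detours which is $\lesssim \diam(\K)^\beta \cdot |\gamma| \lesssim \diam(\K)^{\beta+1}$ — the latter is too crude, so instead one must argue the detour tracks $\gamma$ closely enough that the \emph{difference} in cost, not the absolute cost, is $O(\varepsilon \cdot \#\text{bad arcs})$; (6) set $\varepsilon = \sqrt{\eta\,\diam(\K)}$ and conclude.

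The main obstacle will be step (5): making the rerouting precise enough that the \emph{increase} in \ac{fdtm} cost is genuinely of order $(\text{number of reconnection hops})\times\eta\times\diam(\K)^\beta$ rather than of order $|\gamma|\times\diam(\K)^\beta$. This is exactly where the $\sqrt{\hausdorff(\F,\G)}$ — as opposed to a crude bound linear in $\diam(\K)$ — comes from, and it requires carefully arranging the detour so that on each arc the new path stays within $O(\varepsilon + \eta)$ of the old one, so that the $\beta$-th powers of $d_\mu$ along the two nearly coincide (using the $1$-Lipschitz property \cref{eq:dtm_lip} of $d_\mu$ together with a mean-value bound $|a^\beta - b^\beta| \le \beta\,\diam(\K)^{\beta-1}|a-b|$ on $[0,\diam(\K)]$), and the only irreducible extra cost is the $2\eta$ of straight hops per bad arc.
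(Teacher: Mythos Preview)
Your plan is essentially the paper's own approach: take an $\F$-geodesic $\gamma$, chop it into pieces of a tunable scale $r$, replace each piece by a segment between the nearest $\G$-neighbors of its endpoints, bound the extra cost per piece, and optimize $r\sim\sqrt{\eta\,\diam(\K)}$. Two points where the paper's execution is sharper than your sketch: (i) the replacement path is not a vague ``tracking'' curve but simply the straight segment $[x'_i,x'_{i+1}]$ between projected endpoints, which is automatically in $\Gamma_\G$; the cost of this segment is then compared to $D_{\mu,\F}(x_i,x_{i+1})$ via a lemma showing $D_\mu([x,y])\le D_{\mu,\F}(x,y)+\tfrac{\beta}{2}\|x-y\|^2$ --- this quadratic term is the precise source of the $\sqrt{\eta}$ and replaces your Lipschitz tracking estimate. (ii) Long outer segments of $\gamma$ (straight pieces outside $\F$ of length $\ge r$) cannot be subdivided at interior points since those are not in $\F$ and hence not $\eta$-close to $\G$; the paper handles them as a separate case (type~(a) in its decomposition lemma), using that on such a piece the geodesic already \emph{is} the straight segment, so the perturbed segment $[x'_i,x'_{i+1}]$ can be compared directly via a first-order Lipschitz bound rather than the quadratic one. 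Your remark that these are ``unproblematic'' is correct in spirit but this classification is where the bookkeeping actually lives.
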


Both \cref{thm:intro:fdtm_stab_dtm,thm:intro:fdtm_stab_domain} require no assumption on the regularity of the measures and domains. This is possible due to the definition of admissible paths allowing ``shortcuts'' to override irregularities in the domain.
% When assuming that the domain has positive reach and that paths are constrained within it, the geodesics are shown by \cite{TODO} to also have positive reach. This allows to obtain a better upper bound than in \cref{thm:intro:fdtm_stab_domain}, essentially replacing $\hausdorff(\F,\G)^{1/2}$ with $\hausdorff(\F,\G)$. %TODO -> add later?

\medskip\noindent
\textit{Empirical \ac{fdtm}.}
The previous stability results provide efficient tools to study the estimation of the \ac{fdtm} from random samples. 
Given $n\ge1$ sample points $(X_1,\dots,X_n)$ drawn \ac{iid} from $\mu$, we denote $\XX_n = \{X_1, \dots, X_n\}$ the corresponding point cloud and $\hat\mu_n = \frac1n \sum_{k=1}^n \delta_{X_k}$ the empirical measure.
Estimation results are restricted to the special case where the domain $\F$ is chosen as the support $\supp(\mu)$ of the measure, and we denote $D_\mu = D_{\mu,\supp(\mu)}$ for simplicity.
Indeed, to avoid a strong computation cost depending on the ambient dimension $d$, it is easier to compute paths made up of straight lines between sample points, which are exactly the paths admissible for the domain $\XX_n$.
Since $d_{\hat\mu_n}$ is expected to converge towards $d_\mu$ and $\XX_n = \supp(\hat\mu_n)$ is expected to fill up the whole support $\supp(\mu)$, the \ac{fdtm} $D_\mu$ can be estimated by the empirical \ac{fdtm} $D_{\hat\mu_n} = D_{\hat\mu_n,\XX_n}$, which is the metric of a finite weighted graph.
The weights are the integrals of the \ac{dtm} over edges, hence cannot be reasonably computed in general. In practice, as the \ac{dtm} is $1$-Lipschitz, each integral can be approximated by a discrete sum over a regular subdivision of the underlying edge, using finer subdivisions as $n$ grows to retain convergence properties.
For simplicity we keep the original definition of the weights in our theoretical results. Practical considerations are discussed in \cref{sec:algorithm}.

In order to control the convergence rate of the empirical \ac{fdtm} over $\K$, the offset is decomposed into two terms as
\begin{align*}
    \bvv D_\mu - D_{\hat\mu_n} \bvv_{\infty,\K}
    \le \bvv D_{\mu,\supp(\mu)} - D_{\hat\mu_n,\supp(\mu)} \bvv_{\infty,\K}
        + \bvv D_{\hat\mu_n,\supp(\mu)} - D_{\hat\mu_n,\XX_n} \bvv_{\infty,\K}~.
\end{align*}
Then, \cref{thm:intro:fdtm_stab_domain,thm:intro:fdtm_stab_dtm} are used to upper bound both terms by quantities for which convergence rates can be derived.
To do so, the following assumption is used.
\begin{assumption}
\label{assum}
\hfill
\begin{enumerate}[(i)]
    \item The support of $\mu$ is included in a compact subset of $\RR^d$: $\supp(\mu) \subset \K$.
    \item $\mu$ is an $(a,b)$-standard probability measure for some $a>0$ and $b\ge1$, \ie, for all $x\in\supp(\mu)$ and $0 < r \le a^{-1/b}$,
    \begin{align*}
        \mu\bLp\cB(x,r)\bRp
        \ge ar^b.
    \end{align*}
    \item $\min d_\mu \ge \sigma$ for some $\sigma>0$.
    \item $\supp(\mu)$ is connected.
\end{enumerate}
\end{assumption}
Recall that \cref{assum}.(i) is without loss of generality (see \cref{app:restriction_compact}).
\Cref{assum}.(ii) is the most important assumption and will direct the rate of convergence of the empirical \ac{fdtm}. $b$ may be interpreted as an upper bound on the intrinsic dimension of the support of $\mu$.
Regarding \cref{assum}.(iii), \cref{lemma:dtm_lower_bound} ensures that it is satisfied by any measure without atoms of mass larger than $m$.
\Cref{assum}.(iii) and .(iv) are technical assumptions that do not have a fundamental impact on the results. 
We denote $\cM_{\K,a,b,\sigma}$ the set of measures satisfying \cref{assum} for given $\K$, $a$, $b$ and $\sigma$, and can now state our main result.

\begin{theorem}[Convergence of the empirical \ac{fdtm}]
\label{thm:intro:fdtm_estimate}
Assume that $\mu \in \cM_{\K,a,b,\sigma}$ satisfies \cref{assum} and that $m \le \frac12$.
Then for all $n\ge\frac1m$,
\begin{align*}
    \EE\Lb\bvv D_\mu - D_{\hat\mu_n} \bvv_{\infty,\K}\Rb
    \lesssim d \diam(\K)^{\beta+1} \frac{\log(n)}{n^{\frac{1}{2b}}}
\end{align*}
where $\lesssim$ hides a multiplicative constant depending on $m$, $p$, $\beta$, $a$, $b$ and $\sigma/\diam(\K)$.
\end{theorem}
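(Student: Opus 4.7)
The plan is to follow the two-step decomposition spelled out just above the statement:
\begin{align*}
    \bvv D_\mu - D_{\hat\mu_n} \bvv_{\infty,\K}
    \le \bvv D_{\mu,\supp(\mu)} - D_{\hat\mu_n,\supp(\mu)} \bvv_{\infty,\K}
        + \bvv D_{\hat\mu_n,\supp(\mu)} - D_{\hat\mu_n,\XX_n} \bvv_{\infty,\K},
\end{align*}
and then invoke the two stability theorems already established to reduce the problem to bounding, in expectation, two familiar empirical quantities: the $p$-Wasserstein error $\EE[W_p(\mu,\hat\mu_n)]$ and the Hausdorff error $\EE[\hausdorff(\supp(\mu),\XX_n)]$.

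For the first term I would apply \cref{thm:intro:fdtm_stab_dtm} with the common domain $\supp(\mu)$, getting a bound of order $\diam(\K)^\beta\,W_p(\mu,\hat\mu_n)$, and then plug in a Fournier--Guillin-type rate specialized to $(a,b)$-standard measures. Such measures are effectively $b$-dimensional and thus admit a rate of order $n^{-1/b}$, with a prefactor polynomial in $d$ and $\diam(\K)$ and a constant depending on $a,b,p$. The assumption $m\le 1/2$ keeps us in a regime where the intrinsic dimension $b$ controls the rate and where the constants in the stability bound stay harmless.

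For the second term I would apply \cref{thm:intro:fdtm_stab_domain} with $\F=\supp(\mu)$ and $\G=\XX_n$, yielding a bound of order $\diam(\K)^{\beta+1/2}\sqrt{\hausdorff(\supp(\mu),\XX_n)}$. Since $\XX_n\subset\supp(\mu)$, the Hausdorff distance collapses to the sample covering radius $\sup_{x\in\supp(\mu)}d(x,\XX_n)$, which I would control by the standard covering argument: greedily cover $\supp(\mu)$ with $C(\diam(\K)/r)^b$ balls of radius $r$ centered in $\supp(\mu)$ whose $r/2$-companions are disjoint and hence carry mass at least $a(r/2)^b$ by \cref{assum}.(ii); a union bound then gives $\mathbb{P}(\hausdorff>r)\lesssim r^{-b}\exp(-nar^b)$, and integrating the tail produces $\EE[\hausdorff(\supp(\mu),\XX_n)]\lesssim (\log n/n)^{1/b}$. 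Jensen's inequality on the square root then yields a contribution of order $(\log n/n)^{1/(2b)}$.

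Summing the two contributions, the Hausdorff term dominates; using $(\log n)^{1/(2b)}\le\log n$, valid as soon as $n\ge e$ (which is ensured by $n\ge 1/m$ since $m\le 1/2$), delivers the announced rate with the correct dependencies on $\K$, $d$ and the parameters of \cref{assum}. The main obstacle is producing the two probabilistic ingredients under the single mild $(a,b)$-standard hypothesis: in the Wasserstein step one must argue carefully that the intrinsic dimension $b$ controls the rate while the multiplicative dependence on $d$ stays polynomial (including the delicate regime where $p$ is not much smaller than $b$), and in the Hausdorff step one must build the covering inside $\supp(\mu)$ itself so that the cardinality scales as $r^{-b}$ and not $r^{-d}$. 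Once both ingredients are in place, the assembly is mere bookkeeping of constants.
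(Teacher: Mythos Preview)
Your decomposition and your treatment of the second (Hausdorff) term match the paper exactly. The gap is in the first term. You propose to control it via \cref{thm:intro:fdtm_stab_dtm} and then bound $\EE[W_p(\mu,\hat\mu_n)]$ by a Fournier--Guillin rate ``of order $n^{-1/b}$''. That rate is not correct in general: for an intrinsically $b$-dimensional measure the empirical $W_p$ rate is $\max\bigl(n^{-1/(2p)},\,n^{-1/b}\bigr)$ (this is precisely the bound the paper quotes from \cite{boissard2011mean} in \cref{eq:estimation:speed_wasserstein}). When $p>b$ the $n^{-1/(2p)}$ term dominates, and it is strictly slower than the target $n^{-1/(2b)}$, so your assembly would not deliver the announced rate. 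The paper explicitly discusses and rejects the Wasserstein route for exactly this reason.

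What the paper does instead for the first term is to stop at the intermediate bound in \cref{thm:fdtm_stab_dtm}, namely $\bigl\|D_{\mu,\F}-D_{\nu,\F}\bigr\|_{\infty,\K}\lesssim \diam(\K)\,\|d_\mu^\beta-d_\nu^\beta\|_\infty$, and then control $\|d_\mu^\beta-d_{\hat\mu_n}^\beta\|_\infty$ by $\|d_\mu^p-d_{\hat\mu_n}^p\|_\infty$ via a pointwise Lipschitz comparison of $t\mapsto t^{\beta/p}$. This last step is where \cref{assum}.(iii) enters: when $\beta<p$ the map $t\mapsto t^{\beta/p}$ is not globally Lipschitz on $[0,1]$, and one needs the lower bound $d_\mu\ge\sigma$ to get a Lipschitz constant $\sigma^{\beta-p}$ (see \cref{eq:proof:fdtm_estimateLip}). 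This is the origin of the $\sigma/\diam(\K)$ dependence in the hidden constant, which your outline does not account for. Finally, $\EE\bigl[\|d_\mu^p-d_{\hat\mu_n}^p\|_{\infty,\K}\bigr]$ is bounded at the parametric rate $d\log(n)/\sqrt{n}$ by the results of \cite{chazal2016rates} (\cref{prop:approx_dtm}), which is where \cref{assum}.(iv) (connected support) and the factor $d$ come from. Since $n^{-1/2}\le n^{-1/(2b)}$, this term is always dominated by the Hausdorff contribution, yielding the stated bound.
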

A more detailed statement is provided in \cref{sec:convergence}.
A weaker version may be stated without \cref{assum}.(iii) and (iv), which does not require any lower bound on the \ac{dtm} but features an additional term of order $n^{-1/2p}$, possibly achieving a worse rate of convergence when $p$ is large.

The main strength of \cref{thm:intro:fdtm_estimate} is its generality. The most restrictive assumption is the $(a,b)$-standard assumption which remains light compared to regularity conditions on the density or on the support of the measure.
Compared to the convergence result provided by \cite[Theorem 2.3]{groismanNonhomogeneousEuclideanFirstpassage2022} for the Fermat distance, \cref{thm:intro:fdtm_estimate} is not reduced to the asymptotic regime and does not involve any normalization constant.

Another idea to achieve a similar result in the case where the Fermat distance cannot be defined do to $\mu$ not having a density, is to use the original Fermat distance but with a ``smoothed'' version of~$\mu$, for instance through a convolution with a Gaussian measure, instead of replacing the density with the \ac{dtm} in \cref{eq:fermat}.
However by doing so the intrinsic dimension of the measure would default to the ambient dimension, which would affect negatively the convergence properties of the Fermat distance. In \cref{thm:intro:fdtm_estimate}, the convergence rate does not involve the ambient dimension.

\medskip\noindent
\textit{Minimax lower bound.}
Our final point of interest is to evaluate how close to optimal is our proposed \ac{fdtm} estimator from a minimax viewpoint.
We establish a lower bound on the convergence rate of any estimator of the \ac{fdtm} in its worst case scenario for the measure~$\mu$---see \cref{sec:minimax}.

\begin{theorem}[Minimax lower bound]{minimax}
\label{thm:intro:minimax}
Fix $\K\subset\RR^d$ and $b\ge1$ and assume that $\K \subset \RR^d$ is not a segment.
Then there exist $x,y \in \K$ and constants $a_0$, $\sigma_0$ and $n_0$ depending on $m$, $p$, $\beta$ and $b$ such that for all $a \le a_0$, $\sigma \le \sigma_0$ and $n \ge n_0$,
\begin{align*}
    \min_{\hat D\in\cD_n} \max_{\mu\in\cM_{\K,a,b,\sigma}} \EE\Lb\bv \hat D - D_\mu(x,y) \bv\Rb
    \gtrsim \frac{\|x-y\|^{\beta+1}}{n^{\frac1b\wedge\frac12}}
\end{align*}
where $\cD_n$ is the set of possible estimators of $D_\mu(x,y)$ from $n$ sample points and $\gtrsim$ hides a multiplicative constant depending on $m$, $p$, $\beta$ and $b$.
\end{theorem}

The above lower bound features a convergence rate depending on the inverse of the intrinsic dimension, similarly to the upper bound in \cref{thm:intro:fdtm_estimate}, although a gap subsists when $b\ge2$.
Our estimator is therefore minimax optimal when $b=1$. The minimax optimal convergence rate when $b\ge2$ remains unclear and its precise study is beyond the scope of this paper.

\section{Geodesics and Stability Properties of the FDTM}
\label{sec:geodesic&stability}

This sections details how the existence of geodesics for the \ac{fdtm} is ensured, how to obtain a uniform bound on their Euclidean length, and how this ensures quantitative stability bounds on the \ac{fdtm}.

\subsection{Existence of Geodesics}
\label{sec:geodesic_existence}

Let $\mu$ be a probability measure with support contained in a compact convex set $\K$, $\F\subset\K$ a closed domain and $x,y\in\RR^d$. Recall that these assumptions are without loss of generality (see \cref{app:restriction_compact}).
Without domain restrictions, the existence of geodesics can be derived from compactness arguments (see Proposition 2.5.19 in \cite{buragoCourseMetricGeometry2001}).
In general, the \ac{fdtm} model does not exactly fit the framework for this result due to the fact that the concatenation of two admissible paths may not be admissible if both paths meet outside the domain. In the following we adapt the proof to account for this issue.

\begin{proof}[Proof of \cref{thm:intro:geodesic}]
We first treat the case of endpoints $x,y \in \F$.
Note that the topologies of both the Euclidean and the \ac{fdtm} metrics are equivalent (a rigorous explanation is given in \cref{app:equivalent_topology}).
In particular, since $\F$ is a closed subset of $\K$, $(\F,\|\cdot\|)$ is a compact metric space and so is $(\F,D_{\mu,\F})$.
Moreover, the set of rectifiable curve, \ie, of curves with finite Euclidean length, is the same as the set of curves with finite \ac{fdtm} length.

Let $(\gamma_k)_{k\ge1}$ a sequence of paths in $\Gamma_\F(x,y)$ such that $\lim_{k\to\pinfty} D_\mu(\gamma_k) = D_{\mu,\F}(x,y)$.
This sequence has uniformly bounded \ac{fdtm} length by definition.
Therefore, according to \cite[Theorem 2.5.14]{buragoCourseMetricGeometry2001} and replacing the sequence with a subsequence if needed, $\gamma_k$ converges uniformly to a rectifiable curve $\gamma$.
Moreover,~\cite[Proposition 2.3.4.iv]{buragoCourseMetricGeometry2001} states that $\gamma \mapsto D_\mu(\gamma)$ is lower semi-continuous, which implies that
\begin{align*}
    D_\mu(\gamma) \le \liminf_{k\to\pinfty} D_\mu(\gamma_k) = D_{\mu,\F}(x,y)~.
\end{align*}
It remains to show that $\gamma\in\Gamma_\F(x,y)$. It is immediate from the uniform convergence that $\gamma(0)=x$ and $\gamma(1)=y$.
Now, given any open interval $I$ such that $\gamma(I) \subset \RR^d\setminus \F$, it suffices to show that $\dot\gamma$ has constant direction over all segments $J\subset I$.
First, $\gamma(J)$ is compact and at positive distance away from $\F$, so that by uniform convergence $\gamma_k(J) \subset \RR^d\setminus \F$ for $k$ large enough, hence $\dot\gamma_k(t) = \|\dot\gamma_k(t)\|u_k$ for all $t\in J$ where $u_k$ is a unit vector.
Then for all $s<t\in J$,
\begin{align*}
    \gamma_k(t)-\gamma_k(s)
    = \int_s^t \|\dot\gamma_k(t)\|u_k \d t
    = \int_s^t \|\dot\gamma_k(t)\|\d t \cdot u_k~.
\end{align*}
Due to the uniform convergence of $\gamma_k$ to $\gamma$, $\gamma_k(t)-\gamma_k(s)$ converges to $\gamma(t)-\gamma(s)$.
$u_k$ is a unit vector and therefore
\begin{align*}
    \lim_{k\to\pinfty} u_k
    = \lim_{k\to\pinfty} \frac{\gamma_k(t)-\gamma_k(s)}{\|\gamma_k(t)-\gamma_k(s)\|}
    = \frac{\gamma(t)-\gamma(s)}{\|\gamma(t)-\gamma(s)\|}~.
\end{align*}
Since this is true for arbitrary $s,t\in J$, we get
\begin{align*}
    \dot\gamma(t)
    = \|\dot\gamma(t)\| u
\end{align*}
for all $t$ in the interior of $J$, where $u = \lim_{k\to\pinfty} u_k$.
This concludes the fact that $\gamma\in\Gamma_\F(x,y)$.
Therefore, $\gamma$ is a geodesic.

If $x$ does not belong to $\F$, notice that any path in $\Gamma_\F(x,y)$ is made of a straight line $[x,z]$ going to $\F$ followed by a path in $\Gamma_\F(z,y)$, where $z\in \F$ may be equal to $y$.
Thus
\begin{align*}
    D_{\mu,\F}(x,y)
    = \inf_{z\in \F}\bLp D_\mu([x,z]) + D_{\mu,\F}(z,y) \bRp
\end{align*}
is the infimum of a continuous function of $z$ over the compact set $\F$, hence a minimum.
Therefore, there exists a geodesic.
The same reasoning goes for $y\notin \F$ with the addition of the direct path $[x,y]$ which may not go through $\F$ at all.
This concludes the proof in the general case.
\end{proof}

\subsection{Bounding the Length of Geodesics}
\label{sec:geodesic_length}

In the particular case where the \ac{dtm} is lower bounded by a positive quantity, a bound can be obtained on the length of geodesics depending on the lower bound by using elementary arguments.
In general, the properties of the \ac{dtm} allow to obtain a bound that does not require any assumption on $\mu$, although this requires a more complex reasoning.

\subsubsection{Case of a Lower Bounded DTM}
When $\min d_\mu > 0$, the length of geodesics can easily be upper bounded.
Indeed, the \ac{fdtm} between two endpoints $x$ and $y$ is at most the \ac{fdtm} length of the segment $[x,y]$ and
\begin{align*}
    D_{\mu,\F}(x,y)
    \le D_\mu([x,y])
    \le \|x-y\| \max_{[x,y]}d_\mu^\beta~.
\end{align*}
Then, if $\gamma \in \Gamma_{\mu,\F}^\star(x,y)$, by using the lower bound on the \ac{dtm},
\begin{align*}
    D_{\mu,\F}(x,y)
    = D_\mu(\gamma)
    \ge |\gamma| (\min d_\mu)^\beta~.
\end{align*}
Therefore,
\begin{align*}
    |\gamma|
    \le \Lp\frac{\max_{[x,y]}d_\mu}{\min d_\mu}\Rp^\beta \|x-y\|
\end{align*}
which proves \cref{prop:euclidean_bound_min_dtm}.

\subsubsection{General Case (sketch of proof)}
\label{sec:geodesic_length_general}

We now focus on the general case where the upper bound from \cref{thm:intro:euclidean_bound} can be derived with no dependency on the measure itself nor on the domain.
The study of the Euclidean length of geodesic paths requires to consider the sublevel sets
\[L_{\mu,\delta} \eqdef \{x\in\RR^d : d_\mu(x) < \delta\}\]
of the \ac{dtm}. A key property of the \ac{dtm} is that $d_\mu(x)$ being small implies that a substantial amount of mass from $\mu$ lies nearby $x$. This remark allows to upper bound the packing and covering numbers of $L_{\mu,\delta}$, for which we recall the definitions.
\begin{definition}
Let $A \subset \RR^d$, $\cF$ a finite family of points in $A$ and $r>0$.
\begin{itemize}
    \item $\cF$ is said to be a $r$-\emph{packing} of $A$ if points in $\cF$ are at least $r$ apart from each other, \ie, if $\cB(x,r/2) \cap \cB(x',r/2) = \emptyset$ for any distinct points $x,x' \in \cF$.
    \item $\cF$ is said to be a $r$-\emph{covering} of $A$ if all points in $A$ are at most $r$ apart from a point in $\cF$, \ie, if $A \subset \bigcup_{x\in\cF} \cB(x,r)$.
\end{itemize}
The packing number is then defined as
\[\pack(A,r) = \max\bLa n\ge0~:~\textrm{$A$ admits a $r$-packing of size $n$}\bRa\]
and the covering number as
\[\cov(A,r) = \min\bLa n\ge0~:~\textrm{$A$ admits a $r$-covering of size $n$}\bRa~.\]
\end{definition}

A $r$-packing $\cF$ of $A$ is \emph{maximal} if no point from $A$ can be appended to $\cF$ while preserving the packing property.
A maximal $r$-packing is thus also a $r$-covering. In particular, $\cov(A,r) \le \pack(A,r)$.

\begin{lemma}[Covering property of the \ac{dtm} sublevels]
\label{lemma:dtm_covering}
Let $\mu$ be a probability measure.
Then for all $\delta>0$,
\begin{align*}
    \cov\bLp L_{\mu,\delta},4\delta \bRp
    \le \pack\bLp L_{\mu,\delta},4\delta \bRp
    \le \frac2m~.
\end{align*}
\end{lemma}

\begin{proof}
Let $\delta>0$.
For all $x\in L_{\mu,\delta}$, since $u\mapsto \delta_{\mu,u}(x)$ is non-decreasing,
\begin{align*}
    \delta^p
    > d_\mu(x)^p
    = \frac1m \int_0^m \delta_{\mu,u}(x)^p \d u
    \ge \frac1m \int_{m/2}^m \delta_{\mu,u}(x)^p \d u
    \ge \frac{1}{2} \delta_{\mu,m/2}(x)^p~,
\end{align*}
hence $\delta_{\mu,m/2}(x) < 2^{1/p}\delta \le 2\delta$, which by definition implies that $\mu\bLp\cB(x,2\delta)\bRp \ge \frac m2$.
Consider a maximal $4\delta$-packing of $L_{\mu,\delta}$ by points $(x_i)_{1\le i\le N}$ where $N = \pack\bLp L_{\mu,\delta},4\delta \bRp$. By definition, the balls $\bLp\cB(x_i,2\delta)\bRp_i$ are pairwise disjoint.
Since $\mu$ is a probability measure it follows that
\begin{align*}
    1
    \ge \mu\Lp\bigsqcup_{i=1}^N \cB(x_i,2\delta)\Rp
    = \sum_{i=1}^N \mu\bLp\cB(x_i,2\delta)\bRp
    \ge N\frac m2~,
\end{align*}
therefore $\pack\bLp L_{\mu,\delta},4\delta \bRp \le \frac2m$.
\end{proof}

The upper bound on the Euclidean length of geodesics stated by \cref{thm:intro:euclidean_bound} builds upon 
the Lipschitz property of the \ac{dtm} (\crefnopar{eq:dtm_lip}) and \cref{lemma:dtm_covering}.
Indeed, one can modify the trajectory of a geodesic path within a given sublevel to create an alternative path that avoids long squiggling in areas of low \ac{dtm}. Both the Euclidean and \ac{fdtm} lengths of this alternative path within the sublevel can be upper bounded knowing that the \ac{dtm} is Lipchitz and that the size of the sublevel is proportional to the level itself.

Before giving more details on this reasoning we introduce some notations.
To formalize the decomposition and concatenation of paths, we work in the Abelian free group generated by all paths where the~$+$ operator represents the concatenation in a general sense, that is without assuming that endpoints are shared.
Precisely, we use the quotient group that identifies two general paths when they represent the same overall trajectory, \eg, $[x,y] + [y,z] = [y,z] + [x,y] = [x,y,z]$.
The Euclidean and \ac{fdtm} lengths both naturally extend as morphisms to general paths:
\begin{align*}
    |\gamma + \gamma'|
    = |\gamma| + |\gamma'|
    \qquad\textrm{and}\qquad
    D_\mu(\gamma + \gamma')
    = D_\mu(\gamma) + D_\mu(\gamma')~.
\end{align*}
This is also true for countable concatenations of paths.
Given $\gamma\in\Gamma_\F(x,y)$ and $\delta>0$, we call \emph{sub-$\delta$ sections} of $\gamma$, denoted by $[\gamma]^\delta$, the connected sections of the path located in the sublevel area $L_{\mu,\delta}$ of the \ac{dtm}.
Likewise, the remaining part of the path is called \emph{super-$\delta$ sections} of $\gamma$ and denoted by $[\gamma]_\delta$.
The part of the path located in $L_{\mu,\delta} \setminus L_{\mu,\delta'}$ is denoted by
\begin{align*}
    [\gamma]_{\delta'}^\delta 
    \eqdef [\gamma]^\delta - [\gamma]_{\delta'}. 
\end{align*}
Finally, the connected sections of a path that do not belong to the domain $\F$, which are required to be straight lines by definition, are called \emph{outer sections}. On the other hand, connected sections remaining within the domain are called \emph{inner sections}.

\begin{lemma}[Path Modification]
\label{lemma:geodesic:gamma_modif_main}
Let $\gamma\in\Gamma_\F(x,y)$ and $\delta>0$.
Then there exist constants $c$ and $\rho$ depending on $m$ and $\beta$ and a decomposition of the path $\gamma = \eta + \chi + \omega$ along with a modified path $\tilde\gamma = \tilde\eta + \tilde\chi + \tilde\omega \in \Gamma_\F(x,y)$ such that
\begin{enumerate}[(i)]
    \item $\eta \subset [\gamma]^{\rho\delta}$ and $D_\mu(\tilde\eta) \le c \delta^{\beta+1}$.
    \item $\tilde\chi \subset \chi$ and $\bv[\chi]^{\delta}\bv \le \frac12 \bv[\gamma]_\delta^{\rho\delta}\bv$.
    \item $\tilde\omega \subset \omega \subset [\gamma]_\delta$.
\end{enumerate}
\end{lemma}

\Cref{app:geodesics} provides more insight on the construction of the alternative path of \cref{lemma:geodesic:gamma_modif_main} (see for instance \cref{fig:path_decomposition_no_domain,fig:path_decomposition_with_domain}) along with the complete proof.
Precisely, 
\cref{lemma:geodesic:gamma_modif_main} is first proven in the case where no constraints are put on the domain, \ie, $\F=\RR^d$, in \cref{app:geodesic:details_no_domain}.
The proof in the general case is more technical and is detailed in \cref{app:geodesic:details_domain}.

Given a geodesic $\gamma$, its Euclidean length can then be upper bounded by considering the modifications given by \cref{lemma:geodesic:gamma_modif_main} associated with a well-chosen decreasing sequence of thresholds $(\delta_k)_{k\ge0}$ ranging from $\|d_\mu\|_{\infty,\gamma}$ to $0$.
Indeed, the part of $\gamma$ belonging to $L_{\mu,{\delta_k}} \setminus L_{\mu,{\delta_{k+1}}}$ immediately satisfies
\begin{align*}
    D_\mu\bLp[\gamma]_{\delta_{k+1}}^{\delta_k}\bRp
    \ge \bv [\gamma]_{\delta_{k+1}}^{\delta_k} \bv \cdot \delta_{k+1}^\beta
\end{align*}
and the left-hand side can be upper bounded by comparing it to the \ac{fdtm} length of the modification associated with the threshold $\delta_k$ using the geodesic nature of $\gamma$.
This reasoning is detailed in \cref{app:geodesic:details_no_domain,app:geodesic:details_domain}---in the case of $\F=\RR^d$ then in the general case---and eventually yields \cref{eq:thm:euclidean_bound}.
Finally, \cref{app:euclidean_bound_local} contains the detailed computations to establish \cref{corol:euclidean_bound_local}.

\subsection{Stability of the FDTM}
\label{sec:stability}

The upper bound on the length of geodesics allows to derive stability results in a straightforward manner.

\subsubsection{Stability with respect to the Measure}
\label{sec:stability_dtm}

We state a more precise version of \cref{thm:intro:fdtm_stab_dtm}.
While the Lipschitz stability \ac{wrt} the Wasserstein distance is interesting in itself, an intermediate bound will be used instead when dealing with convergence rate of the empirical \ac{fdtm}.
\cref{thm:fdtm_stab_dtm} is a direct consequence of the upper bound on the length of geodesics from \cref{corol:euclidean_bound_uniform} and of the Wasserstein stability of the \ac{dtm} from \cref{eq:dtm_stab_wasserstein}.

\begin{theorem}
\label{thm:fdtm_stab_dtm}
Let $\mu,\nu \in \cM_\K$ and $\F \subset \K$ a closed domain.
Then
\begin{align*}
    \bvv D_{\mu,\F} - D_{\nu,\F} \bvv_{\infty,\K}
    \le \lambda \diam(\K) \bvv d_\mu^\beta-d_\nu^\beta \bvv_{\infty,\K}
    \le \frac{\beta \lambda}{m^{1/p}} \diam(\K)^\beta~W_p(\mu,\nu)
\end{align*}
where $\lambda$ is the hidden constant in \cref{eq:thm:euclidean_bound}.
\end{theorem}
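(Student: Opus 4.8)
The plan is to establish the two inequalities in \cref{thm:fdtm_stab_dtm} in sequence, the first being the genuinely new content and the second following from elementary algebra plus the known Wasserstein stability of the \ac{dtm} (\cref{eq:dtm_stab_wasserstein}).

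For the first inequality, I would fix $x,y\in\K$ and estimate $D_{\mu,\F}(x,y) - D_{\nu,\F}(x,y)$; by symmetry it suffices to bound this from above. Let $\gamma\in\Gamma_{\mu,\F}^\star(x,y)$ be a minimizing geodesic for $\mu$, which exists by \cref{thm:intro:geodesic}. Since $\gamma$ is admissible for the domain $\F$, it is a valid competitor for $D_{\nu,\F}$, so
\begin{align*}
    D_{\nu,\F}(x,y)
    \le D_\nu(\gamma)
    = \int_0^1 d_\nu(\gamma(t))^\beta \|\dot\gamma(t)\| \d t.
\end{align*}
Subtracting $D_{\mu,\F}(x,y) = D_\mu(\gamma) = \int_0^1 d_\mu(\gamma(t))^\beta \|\dot\gamma(t)\| \d t$, I get
\begin{align*}
    D_{\nu,\F}(x,y) - D_{\mu,\F}(x,y)
    \le \int_0^1 \bLp d_\nu(\gamma(t))^\beta - d_\mu(\gamma(t))^\beta \bRp \|\dot\gamma(t)\| \d t
    \le \bvv d_\mu^\beta - d_\nu^\beta \bvv_\infty \cdot |\gamma|.
\end{align*}
Now \cref{corol:euclidean_bound_uniform} gives $|\gamma| \le \lambda \diam(\K)$ with $\lambda$ depending only on $m$ and $\beta$, yielding $D_{\nu,\F}(x,y) - D_{\mu,\F}(x,y) \le \lambda \diam(\K) \bvv d_\mu^\beta - d_\nu^\beta \bvv_\infty$. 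Swapping the roles of $\mu$ and $\nu$ gives the reverse bound, and taking the supremum over $x,y\in\K$ establishes the first inequality.

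For the second inequality, I would pass from $\bvv d_\mu^\beta - d_\nu^\beta \bvv_\infty$ to $\bvv d_\mu - d_\nu \bvv_\infty$ using the algebraic identity $a^\beta - b^\beta = (a-b)\sum_{j=0}^{\beta-1} a^j b^{\beta-1-j}$ when $\beta$ is an integer, or more generally the mean value bound $|a^\beta - b^\beta| \le \beta \max(a,b)^{\beta-1} |a-b|$ for $a,b\ge0$ and $\beta\ge1$. Applied pointwise with $a = d_\mu(z)$, $b = d_\nu(z)$ for $z\in\K$, and using the uniform bound $\|d_\mu\|_{\infty,\K}, \|d_\nu\|_{\infty,\K} \le \diam(\K)$ from \cref{eq:dtm_uniform_bound}, this gives $\bvv d_\mu^\beta - d_\nu^\beta \bvv_\infty \le \beta \diam(\K)^{\beta-1} \bvv d_\mu - d_\nu \bvv_\infty$. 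Combining with \cref{eq:dtm_stab_wasserstein}, namely $\bvv d_\mu - d_\nu \bvv_\infty \le W_p(\mu,\nu)/m^{1/p}$, and multiplying through by $\lambda \diam(\K)$ from the first inequality produces $\lambda \diam(\K) \cdot \beta \diam(\K)^{\beta-1} \cdot W_p(\mu,\nu)/m^{1/p} = \frac{\beta\lambda}{m^{1/p}} \diam(\K)^\beta W_p(\mu,\nu)$, as claimed.

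I do not expect a serious obstacle here: the whole argument is a short chain of the already-established ingredients (existence of geodesics, the Euclidean length bound, the uniform \ac{dtm} bound, the elementary power inequality, and Wasserstein stability of the \ac{dtm}). The one point demanding a little care is making sure the competitor argument is legitimate — that $\gamma\in\Gamma_{\mu,\F}^\star(x,y)$ is indeed an element of $\Gamma_\F(x,y)$ and hence admissible in the infimum defining $D_{\nu,\F}$ — but this is immediate since $\Gamma_\F(x,y)$ depends only on $\F$ (and $x,y$), not on the measure. A secondary subtlety is that the crude estimate $\int_0^1 (d_\nu^\beta - d_\mu^\beta)(\gamma(t))\|\dot\gamma(t)\|\d t \le \bvv d_\mu^\beta - d_\nu^\beta\bvv_\infty |\gamma|$ silently uses that $\gamma$ is rectifiable with $|\gamma| < \infty$, which holds because $\gamma$ is a geodesic (so \cref{corol:euclidean_bound_uniform} applies) — worth stating explicitly but not an actual difficulty.
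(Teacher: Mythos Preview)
Your proposal is correct and follows essentially the same argument as the paper: take a geodesic for $\mu$, use it as a competitor for $\nu$, bound the integrated difference by $|\gamma|\cdot\|d_\mu^\beta-d_\nu^\beta\|_\infty$, invoke \cref{corol:euclidean_bound_uniform}, then symmetrize; the second inequality is obtained by the $\beta\diam(\K)^{\beta-1}$-Lipschitz property of $t\mapsto t^\beta$ on $[0,\diam(\K)]$ together with \cref{eq:dtm_stab_wasserstein}, exactly as you describe.
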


\begin{proof}
Consider two endpoints $x$ and $y$ in $\K$ and let $\gamma\in\Gamma_{\mu,\F}^\star(x,y)$ be a geodesic.
Since by definition $D_{\mu,\F}(x,y) = D_\mu(\gamma)$ and $D_{\nu,\F}(x,y) \le D_\nu(\gamma)$,
\begin{align*}
    D_{\nu,\F}(x,y) - D_{\mu,\F}(x,y)
    \le \int_\gamma |d_\nu^\beta-d_\mu^\beta|
    \le |\gamma| \bvv d_\nu^\beta-d_\mu^\beta\bvv_{\infty,\K}
    \le \lambda \diam(\K) \bvv d_\nu^\beta-d_\mu^\beta\bvv_{\infty,\K}
\end{align*}
where $\lambda>0$ is the hidden constant in \cref{corol:euclidean_bound_uniform}.
Switching the roles of $\mu$ and $\nu$ gives the same bound for the opposite quantity.
Then, using the $\beta\diam(\K)^{\beta-1}$-Lipschitz property of $t\in[0,\diam(\K)] \mapsto t^\beta$ along with \cref{eq:dtm_uniform_bound,eq:dtm_stab_wasserstein} yields 
\begin{align*}
    \bv D_{\mu,\F}(x,y) - D_{\nu,\F}(x,y)\bv
    \le \beta\lambda \diam(\K)^\beta \bvv d_\mu-d_\nu\bvv_\infty
    \le \frac{\beta \lambda}{m^{1/p}} \diam(\K)^\beta~W_p(\mu,\nu)~,
\end{align*}
which concludes \cref{thm:fdtm_stab_dtm}.
\end{proof}

\subsubsection{Stability with respect to the Domain (sketch of proof)}
\label{sec:stability_domain}

We state an intermediate version of \cref{thm:intro:fdtm_stab_domain}.
\begin{theorem}
\label{thm:fdtm_stab_domain_intermediate}
Let $\mu \in \cM_\K$ and $\F,\G \subset \K$ two closed domains.
Assume that any point in $\F$ is at distance from $\G$ at most
\[\|d(\cdot,\G)\|_{\infty,\F} \le \frac{\diam(\K)}{25\beta}~.\]
Then for all $x,y\in\K$,
\begin{align*}
    D_{\mu,\G}(x,y)
    \le D_{\mu,\F}(x,y)
        + c\diam(\K)^{\beta+\frac12} \sqrt{\|d(\cdot,\G)\|_{\infty,\F}}
\end{align*}
where $c = 10\sqrt{\beta}\lambda$ and $\lambda$ is the hidden constant in \cref{corol:euclidean_bound_uniform}.
\end{theorem}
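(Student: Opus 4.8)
The plan is to prove the one-sided inequality $D_{\mu,\G}(x,y) \le D_{\mu,\F}(x,y) + c\diam(\K)^{\beta+\frac12}\sqrt{\max_{x\in\F}d(x,\G)}$ by taking a geodesic for the domain $\F$ and explicitly constructing from it an \emph{admissible} path for the domain $\G$ whose \ac{fdtm} is not much larger. Write $h \triangleq \max_{x\in\F}d(x,\G)$ and assume WLOG $\diam(\K)=1$, so the hypothesis reads $h \le \tfrac{1}{25\beta}$. Let $\gamma\in\Gamma_{\mu,\F}^\star(x,y)$ be a minimizing geodesic; by \cref{corol:euclidean_bound_uniform} its Euclidean length satisfies $|\gamma|\lesssim 1$, and $\|d_\mu\|_{\infty,\gamma}\le 1$ by \cref{eq:dtm_uniform_bound}. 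The obstruction is that $\gamma$ traverses $\F$ and its inner sections may avoid $\G$ entirely, so $\gamma$ need not be admissible for $\G$; the outer sections of $\gamma$ (straight lines in $\RR^d\setminus\F$) are harmless since they remain straight, and what we must fix are the pieces of $\gamma$ lying in $\F\setminus\G$.

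\medskip
The construction is a local ``snap-to-$\G$'' surgery. Partition the parameter interval $[0,1]$ into small arcs each of Euclidean length on the order of $\sqrt{h}$ (this scale balances two competing error terms, which is why the final exponent is $\beta+\tfrac12$). On each such arc with endpoints $\gamma(t_i),\gamma(t_{i+1})$, if the arc already lies in $\G$ we keep it; otherwise we replace it by the polygonal detour $[\gamma(t_i), q_i, q_{i+1}, \gamma(t_{i+1})]$ where $q_i\in\G$ is a nearest point to $\gamma(t_i)$, so $\|q_i-\gamma(t_i)\|\le h$. The short connector segments $[\gamma(t_i),q_i]$ leave $\F$ only as straight lines and so are admissible; the ``main'' segment $[q_i,q_{i+1}]$ has its endpoints in $\G$. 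One must check admissibility: any maximal sub-interval of the new path lying outside $\F$ is a concatenation of collinear-per-piece straight segments, but \emph{different} pieces may have different directions, so care is needed — one either argues that the connectors are so short ($\le h$) that the relevant sub-intervals outside $\F$ are single segments, or one absorbs this by noting $q_i\in\G\subset\K$ and re-checking the definition of $\Gamma_\G$ directly. The new path lies in a $2h$-neighbourhood of $\gamma$, hence in $\K$ (taking WLOG a slightly enlarged $\K$, or using that $h$ is tiny and $\K$ convex with the relevant points interior — this minor point is the sort of thing handled in \cref{app:restriction_compact}).

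\medskip
The cost estimate has two parts. First, the number of arcs is $N\asymp |\gamma|/\sqrt{h}\lesssim 1/\sqrt{h}$. Each connector $[\gamma(t_i),q_i]$ contributes at most $\|q_i-\gamma(t_i)\|\cdot\|d_\mu\|_\infty^\beta\le h$ to the \ac{fdtm} (using \cref{eq:dtm_uniform_bound}), so all $2N$ connectors together cost $\lesssim N h \lesssim \sqrt{h}$. Second, the main segment $[q_i,q_{i+1}]$ must be compared to the original arc $\gamma|_{[t_i,t_{i+1}]}$: by the triangle inequality in $\RR^d$, $\|q_i - q_{i+1}\| \le \|\gamma(t_i)-\gamma(t_{i+1})\| + 2h \le (\text{arc length}) + 2h$, and along $[q_i,q_{i+1}]$ the \ac{dtm} is at most $\|d_\mu\|_{\infty,\gamma} + O(\sqrt h)$ by the $1$-Lipschitz property \cref{eq:dtm_lip} (every point of $[q_i,q_{i+1}]$ is within $\sqrt h + 2h$ of the arc). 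Using the $\beta$-Lipschitz-on-$[0,1]$ bound $|s^\beta - t^\beta|\le \beta|s-t|$ for $t\in $ segment length vs. integrated $d_\mu^\beta$ over the arc, the excess from each main segment is $\lesssim \beta\sqrt{h}\cdot(\text{arc length}) + \beta\cdot h$; summing over $i$ gives $\lesssim \beta\sqrt h\,|\gamma| + N\beta h \lesssim \beta\sqrt h$. Adding the two contributions, $D_\mu(\tilde\gamma) \le D_\mu(\gamma) + O(\beta)\sqrt h = D_{\mu,\F}(x,y) + O(\beta)\sqrt h$; since $\tilde\gamma\in\Gamma_\G(x,y)$ we conclude $D_{\mu,\G}(x,y)\le D_{\mu,\F}(x,y) + c\sqrt h$ with $c=O(\sqrt\beta\,\lambda)$ after restoring $\diam(\K)$ by homogeneity (each $d_\mu$ scales like $\diam(\K)$, contributing the $\diam(\K)^\beta$, and the one extra length factor gives $\diam(\K)^{\beta+\frac12}$ paired with $\sqrt{h/\diam(\K)}$). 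The main obstacle is the bookkeeping around admissibility of the surgically modified path near $\partial\F$ — ensuring the detours genuinely lie in $\Gamma_\G(x,y)$ — together with choosing the arc-length scale $\sqrt h$ correctly so the connector cost ($\propto h/\sqrt h$) and the Lipschitz-excess cost ($\propto \sqrt h$) match; the constraint $h\le\frac{1}{25\beta}$ is exactly what keeps these perturbations within the regime where \cref{corol:euclidean_bound_uniform} and the Lipschitz bounds are applied cleanly.
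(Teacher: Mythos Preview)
Your overall strategy matches the paper's --- take an $\F$-geodesic $\gamma$, subdivide at scale $\asymp\sqrt h$, snap breakpoints to $\G$, and join by segments --- but there is a real gap. You assert $\|q_i-\gamma(t_i)\|\le h$, which requires $\gamma(t_i)\in\F$; a uniform subdivision of $\gamma$ can place $\gamma(t_i)$ deep inside a long outer section (in $\RR^d\setminus\F$), where no such bound is available. Saying outer sections are ``harmless since they remain straight'' does not fix this: the issue is whether the \emph{breakpoint} can be snapped to $\G$, not whether the segment itself is straight. The paper resolves this with a dedicated decomposition lemma (\cref{lemma:geodesic_decomposition}) that forces every breakpoint to lie in $\F\cup\{x,y\}$: outer sections of length $\ge r$ are kept whole (``type (a)''), and the rest is chopped into arcs of length in $[\tfrac r2,2r]$ with endpoints in $\F$. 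The approximating path is then simply the polygonal path $[x'_0,\dots,x'_N]$ through the snapped points --- no connectors back to $\gamma(t_i)$ --- which is trivially in $\Gamma_\G(x,y)$ because each vertex lies in $\G\cup\{x,y\}$. Your construction, by contrast, can produce a there-and-back $q_{i+1}\to\gamma(t_{i+1})\to q_{i+1}$ with $\gamma(t_{i+1})\notin\G$ when two consecutive arcs are both replaced, violating the constant-direction constraint (and your admissibility paragraph checks ``outside $\F$'' when it must check ``outside $\G$'').

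Two smaller points. First, to obtain the stated constant $c=10\sqrt\beta\,\lambda$ rather than $O(\beta\lambda)$, the paper takes $r=\sqrt{h/\beta}$, not $\sqrt h$; with your scale the error terms give $O(\beta)\sqrt h$, and your jump to $O(\sqrt\beta\,\lambda)$ at the end is unjustified. Second, for each piece the paper does not compare $D_\mu([x'_i,x'_{i+1}])$ to the raw integral along the arc of $\gamma$ but to $D_{\mu,\F}(x_i,x_{i+1})$, via the preliminary bounds of \cref{lemma:fdtm_upper_bound_segment,lemma:fdtm_upper_bounds} (which carry a quadratic remainder $\tfrac\beta2(\|x_i-x_{i+1}\|+2\epsilon)^2$); this is what makes the sum over $i$ bounded by $D_\mu(\gamma)=D_{\mu,\F}(x,y)$, since the restriction of $\gamma$ to each sub-interval is admissible for $\F$ between endpoints in $\F$.
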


It is straightforward that \cref{thm:fdtm_stab_domain_intermediate} directly implies \cref{thm:intro:fdtm_stab_domain} when the Hausdorff distance $\hausdorff(\F,\G)$ is smaller than the given threshold.
In the other case, the constant $\lambda$ given in \cref{app:geodesic:details_domain} is larger than one so that being above the threshold implies that
\[\diam(\K) \le 25\beta~\hausdorff(\F,\G) \le 100\beta\lambda^2~\hausdorff(\F,\G)~.\]
Then, considering the straight path $[x,y] \in \Gamma_\G(x,y)$ gives the desired upper bound on the \ac{fdtm} by using \cref{eq:dtm_uniform_bound}:
\begin{align*}
    D_{\mu,\G}(x,y)
    \le \|x-y\| \bLp\max_\K d_\mu\bRp^\beta
    \le \diam(\K)^{\beta+1}
    \le 10\sqrt{\beta}\lambda \diam(\K)^{\beta+\frac12} \sqrt{\hausdorff(\F,\G)}~.
\end{align*}

We now describe the main reasoning behind \cref{thm:fdtm_stab_domain_intermediate}.
Given a geodesic $\gamma$ \ac{wrt} the domain $\F$, $\gamma$ is approximated by another path which is admissible for the domain $\G$.
This is done by cutting $\gamma$ into small bits, identifying the nearest neighbor in $\G$ at each step and drawing a polygonal path between the resulting points.
The presence of long outer sections where points are not in $\F$ requires some caution. Indeed, the points outside $\F$ may not be approximated efficiently by points in $\G$.
In this case however, since the outer section must draw a straight line between two points in $\F$ by definition, the segment connecting their nearest neighbors in $\G$ still provides a good approximation of the geodesic.
This analysis is detailed in \cref{app:stability_domain} and eventually shows that if bits are chosen with a typical length $r$---barring long outer sections---then the resulting \ac{fdtm} offset is at most proportional to $r+\frac1r\max_{x\in \F} d(x,\G)$.
Therefore, $r$ is best chosen to scale as the square root of $\max_{x\in \F} d(x,\G)$, which eventually yields \cref{thm:fdtm_stab_domain_intermediate}.

\section{Estimating the FDTM}
\label{sec:estimation}

In this section, we study the special case where the domain is chosen as the support of the measure and the \ac{fdtm} $D_\mu = D_{\mu,\supp(\mu)}$ is estimated by sampling a set $\XX_n$ of $n$ points \ac{iid} according to $\mu$ and considering the empirical \ac{fdtm} $D_{\hat\mu_n} = D_{\hat\mu_n,\XX_n}$.

\subsection{Theoretical Convergence Rate}
\label{sec:convergence}

Applying the previous stability results to compare $D_{\hat\mu_n,\XX_n}$ to $D_{\mu,\supp(\mu)}$ allows to obtain explicit convergence rate of the empirical \ac{fdtm} under \cref{assum}.
In \cref{assum}.(ii), $b$ may be interpreted as an upper bound on the intrinsic dimension of the support of $\mu$. This is true in the sense of the Minkowski dimension, which coincides with standard notion of dimension if $\supp(\mu)$ is a sub-manifold of $\RR^d$.
Conversely, if $\mu$ is supported on a compact manifold of dimension $b$ with density bounded by below, then $\mu$ is $(a,b)$-standard where $a$ depends on the density lower bound and on the geometry of the manifold~\cite{niyogiFindingHomologySubmanifolds2008}.
Finally, note that if a measure $\mu$ is $(a,b)$-standard, then for all $b'\ge b$ it is also $(a',b')$-standard for small enough $a'$.\footnote{By letting $a' = a^{b'/b}$, one has $(a')^{-1/b'} = a^{-1/b}$ and $ar^b \ge a'r^{b'}$ for $0 < r \le (a')^{-1/b'}$.}
In order to control the convergence rate of the empirical \ac{fdtm} over $\K$, we decompose the offset into two terms as
\begin{align}
    \label{eq:estimation:decomposition}
    \bvv D_\mu - D_{\hat\mu_n} \bvv_{\infty,\K}
    \le \bvv D_{\mu,\supp(\mu)} - D_{\hat\mu_n,\supp(\mu)} \bvv_{\infty,\K}
        + \bvv D_{\hat\mu_n,\supp(\mu)} - D_{\hat\mu_n,\XX_n} \bvv_{\infty,\K}~.
\end{align}
Both terms can be further upper bounded using our stability results \cref{thm:fdtm_stab_dtm,thm:intro:fdtm_stab_domain}, involving $\|d_\mu^p-d_{\hat\mu_n}^p\|_\infty$ and $\hausdorff(\supp(\mu),\XX_n)$ respectively.
These two quantities then converge with known rate under \cref{assum}.(ii), namely
\begin{align}
    \label{eq:estimation:speeds}
    \EE\Lb\bvv d^p_\mu-d^p_{\hat\mu_n} \bvv_{\infty,\K}\Rb
    \lesssim \frac{\log(n)}{\sqrt{n}}
    \qquad\textrm{and}\qquad
    \EE\bLb\hausdorff(\supp(\mu),\XX_n)\bRb
    \lesssim \Lp\frac{\log(n)}{n}\Rp^{\frac{1}{b}}~.
\end{align}
The former is directly deduced from \cite{chazalRatesConvergenceRobust2016} whereas the latter is a standard result.
Note that the conditions (iii) and (iv) in \cref{assum} are required only to obtain the convergence rate on the empirical \ac{dtm}.
Without these assumptions, one could instead study the convergence of the empirical measure in Wasserstein distance.
Indeed, \cite{boissardMeanSpeedConvergence2014} showed that, assuming that $\mu$ is $(a,b)$-standard,
\begin{align*}
    \EE\bLb W_p(\mu,\hat\mu_n)\bRb
    \lesssim \max\Lp \frac{1}{n^{\frac{1}{2p}}},\frac{1}{n^{\frac{1}{b}}}\Rp~.
\end{align*}
This would however lead to a term of order $n^{-1/2p}$ in the final bound, which is slower than the statement below if $p>b$.
\Cref{eq:estimation:decomposition,eq:estimation:speeds} put together with \cref{thm:fdtm_stab_dtm,thm:intro:fdtm_stab_domain} allow for the following convergence rate which is a slightly more detailed version of \cref{thm:intro:fdtm_estimate}.

\begin{restatable}{theorem}{fdtmEstimate}
\label{thm:fdtm_estimate}
Assume that $\mu \in \cM_{\K,a,b,\sigma}$ satisfies \cref{assum} and $m\le\frac12$.
Then for all $n\ge\frac1m$,
\begin{align*}
    \EE\Lb\bvv D_\mu - D_{\hat\mu_n} \bvv_{\infty,\K}\Rb
    \lesssim \diam(\K)^{\beta+1} \Lp \Lp\frac{\diam(\K)}{\sigma}\Rp^{(p-\beta)\vee0} \frac{d~\log(n)}{\sqrt{n}} + \Lp\frac{\log(n)}{n}\Rp^{\frac{1}{2b}} \Rp
\end{align*}
where $\lesssim$ hides a multiplicative constant depending on $m$, $p$, $\beta$, $a$ and $b$.
\end{restatable}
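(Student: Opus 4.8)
The plan is to follow the two-term decomposition already recorded in \cref{eq:estimation:decomposition}, bounding $\bvv D_\mu - D_{\hat\mu_n} \bvv_{\infty,\K}$ by the sum of a \emph{change-of-measure} term $\bvv D_{\mu,\supp(\mu)} - D_{\hat\mu_n,\supp(\mu)} \bvv_{\infty,\K}$ (the domain $\supp(\mu)$ is kept fixed while the measure passes from $\mu$ to $\hat\mu_n$) and a \emph{change-of-domain} term $\bvv D_{\hat\mu_n,\supp(\mu)} - D_{\hat\mu_n,\XX_n} \bvv_{\infty,\K}$ (the measure $\hat\mu_n$ is kept fixed while the domain passes from $\supp(\mu)$ to $\XX_n = \supp(\hat\mu_n)$), then estimating each in expectation and adding them. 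All objects involved lie in $\K$ by \cref{assum}.(i), so the stability results of \cref{sec:stability} apply with constants depending only on $m$ and $\beta$.

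For the change-of-measure term I would invoke \cref{thm:fdtm_stab_dtm} with $\F = \supp(\mu)$, which gives $\bvv D_{\mu,\supp(\mu)} - D_{\hat\mu_n,\supp(\mu)} \bvv_{\infty,\K} \le \lambda\,\diam(\K)\,\bvv d_\mu^\beta - d_{\hat\mu_n}^\beta \bvv_\infty$, and then convert the $\beta$-th power into the $p$-th power handled by the rate \cref{eq:estimation:speed_dtm}. The elementary ingredient is that for $0 \le t < s \le \diam(\K)$ with $s \ge \sigma$, setting $u = t/s \in [0,1)$,
\[
  s^\beta - t^\beta
  = s^{\beta-p}\,\frac{1-u^\beta}{1-u^p}\,(s^p - t^p)
  \le \frac{\beta\vee p}{p}\,\diam(\K)^{(\beta-p)\vee0}\,\sigma^{(\beta-p)\wedge0}\,(s^p - t^p),
\]
where $s^{\beta-p}$ is bounded using $\sigma \le s \le \diam(\K)$ and $\frac{1-u^\beta}{1-u^p}$ is bounded by $1\vee\frac{\beta}{p}$ — the latter following from the fact that $g(u) = \beta u^p - p u^\beta - (\beta-p)$ satisfies $g(1) = 0$ and has $g'(u) = \beta p\, u^{p-1}(1 - u^{\beta-p})$ of constant sign on $[0,1]$. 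Applying this pointwise with $s = \max(d_\mu(x),d_{\hat\mu_n}(x))$ and $t = \min(d_\mu(x),d_{\hat\mu_n}(x))$ — legitimate because $s \ge d_\mu(x) \ge \sigma$ by \cref{assum}.(iii) — gives $\bvv d_\mu^\beta - d_{\hat\mu_n}^\beta \bvv_\infty \lesssim \diam(\K)^{(\beta-p)\vee0}\sigma^{(\beta-p)\wedge0} \bvv d_\mu^p - d_{\hat\mu_n}^p \bvv_\infty$. Taking expectations, plugging in the quantitative form of \cref{eq:estimation:speed_dtm} (namely $\EE[\bvv d_\mu^p - d_{\hat\mu_n}^p \bvv_{\infty,\K}] \lesssim \diam(\K)^p\, \frac{d\log(n)}{\sqrt n}$, valid under \cref{assum}.(iii)--(iv), $m \le \frac12$ and $n \ge \frac1m$, as established in \cref{app:estimation_dtm}), and collecting the powers of $\diam(\K)$ and $\sigma$, yields the first summand $\diam(\K)^{\beta+1}(\diam(\K)/\sigma)^{(p-\beta)\vee0}\, \frac{d\log(n)}{\sqrt n}$.

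For the change-of-domain term I would apply \cref{thm:intro:fdtm_stab_domain} to the measure $\hat\mu_n \in \cM_\K$ with the domains $\supp(\mu)$ and $\XX_n$, getting $\bvv D_{\hat\mu_n,\supp(\mu)} - D_{\hat\mu_n,\XX_n} \bvv_{\infty,\K} \lesssim \diam(\K)^{\beta+\frac12}\sqrt{\hausdorff(\supp(\mu),\XX_n)}$, where $\hausdorff(\supp(\mu),\XX_n) = \sup_{x\in\supp(\mu)} d(x,\XX_n)$ since $\XX_n \subset \supp(\mu)$. Taking expectations, applying Jensen's inequality $\EE[\sqrt Z] \le \sqrt{\EE Z}$, and inserting the Hausdorff convergence rate \cref{eq:estimation:speed_hausdorff} — which relies on the $(a,b)$-standard property \cref{assum}.(ii) and is proved in \cref{app:estimation_support} — yields the second summand $\diam(\K)^{\beta+1}\sqrt{\log n}\, n^{-1/(2b)}$. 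Summing the two summands gives exactly the announced bound.

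The only step that goes beyond bookkeeping is the power-conversion inequality in the change-of-measure term, the difficulty being that $\beta$ and $p$ are arbitrary reals and may come in either order; routing through the ratio $s^{\beta-p}\frac{1-u^\beta}{1-u^p}$ and the auxiliary estimate $\sup_{u\in[0,1)}\frac{1-u^\beta}{1-u^p} = 1\vee\frac{\beta}{p}$ handles this cleanly. I also expect to have to check that the quantitative DTM rate imported from \cite{chazal2016rates} indeed carries the $\diam(\K)^p$ and $d\log(n)$ factors used above, since it is this input — not anything new here — that fixes the $d\log(n)/\sqrt n$ shape of the first summand; everything else is a mechanical combination of \cref{thm:fdtm_stab_dtm,thm:intro:fdtm_stab_domain} with \cref{eq:estimation:speed_dtm,eq:estimation:speed_hausdorff}. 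Dropping \cref{assum}.(iii)--(iv), the same scheme applied with the Wasserstein form of \cref{thm:fdtm_stab_dtm} and the rate \cref{eq:estimation:speed_wasserstein} produces the weaker variant carrying an extra $n^{-1/2p}$ term.
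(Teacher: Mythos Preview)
Your proposal is correct and follows essentially the same approach as the paper: the same two-term decomposition \cref{eq:estimation:decomposition}, the same stability theorems \cref{thm:fdtm_stab_dtm,thm:intro:fdtm_stab_domain}, Jensen for the domain term, and the same imported rates from \cref{app:estimation_dtm,app:estimation_support}. The only cosmetic difference is the power-conversion step: you route uniformly through the identity $s^\beta-t^\beta=s^{\beta-p}\frac{1-u^\beta}{1-u^p}(s^p-t^p)$ and the bound $\sup_{u\in[0,1)}\frac{1-u^\beta}{1-u^p}=1\vee\frac{\beta}{p}$, whereas the paper treats the cases $\beta\ge p$ (Lipschitzness of $t\mapsto t^{\beta/p}$ on $[0,1]$) and $\beta<p$ (the inequality $|t^{\beta/p}-s^{\beta/p}|\le t^{\beta/p-1}|t-s|$ applied with $t=d_\mu^p$) separately; both yield exactly $\max(\frac{\beta}{p},\sigma^{\beta-p})$ when $\diam(\K)=1$.
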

Details on the above discussion are provided in \cref{app:estimation}.

\subsection{Minimax Lower Bound}
\label{sec:minimax}

In this section we describe the reasoning to get the minimax lower bound stated in \cref{thm:intro:minimax}, which is based on Le Cam's Lemma (see for instance \cite{yuAssouadFanoCam}).
This lemma essentially states that if two measures $\mu_1,\mu_2 \in \cM_{\K,a,b,\sigma}$ are sufficiently close in total variation distance while maintaining a significant \ac{fdtm} offset, then any estimator of the \ac{fdtm} cannot distinguish between both \acp{fdtm} up to a certain amount of samples.
This provides a worst case lower bound on the convergence rate of any estimator of the \ac{fdtm} for the class of measure $\cM_{\K,a,b,\sigma}$.
We now describe an example of such measures such that Le Cam's Lemma yields the lower bound of order $n^{-1/b}$ stated in \cref{thm:intro:minimax}.

\begin{example}
\label{ex:lecam_b}
Let $x$ and $y$ be the basis vectors of $\RR^2$.
Given $\alpha\in(0,1)$, $r>0$ and $0<\epsilon<(1-\alpha)^{1/b}$, let
\begin{align*}
    \mu
    &= m\alpha\delta_{-ry} + m(1-\alpha)\delta_{ry} + (1-m)\rho,\\
    \nu
    &= \mu - m\epsilon^b\delta_{ry} + m\lambda~,
\end{align*}
where $\rho$ is the uniform probability measure over $[3ry,4ry]$ and $\lambda$ has density $z \mapsto b\|z-(r-\epsilon)y\|^{b-1}$ with regard to the Lebesgue measure on $[(r-\epsilon)y, ry]$, amounting to a total mass $\lambda\bLp[(r-\epsilon)y, ry]\bRp = \epsilon^b$.
Then, there exists a choice of $\alpha\le\frac12$ and $r\le\frac14$ depending on $p$ and $\beta$ along with positive constants $a$, $c$ and $C$ depending on $m$, $p$ and $\beta$ such that for all $0 < \epsilon \le c$, $\mu$ and $\nu$ are both $(a,b)$-standard, satisfy $\dtv(\mu,\nu) = m\epsilon^b$ and
\begin{align*}
    D_\mu(-x,x) - D_\nu(-x,x)
    \ge C\epsilon~.
\end{align*}
\end{example}

\begin{figure}
    \centering
    \begin{tikzpicture}[scale=2]
        \def\r{0.8}
        \def\t{0.25}
        
        \coordinate (X) at (-1.5,0);
        \coordinate (Y) at (1.5,0);
        \coordinate (Z) at (0,\r);
        \coordinate (W) at (0,-\r);
        \coordinate (Z') at (0,\r-\t);

        % Nodes
        \draw[thick] (X) ++(-0.05,-0.05) -- ++(0.1,0.1);
        \draw[thick] (X) ++(-0.05,0.05) -- ++(0.1,-0.1);
        \draw[thick] (Y) ++(-0.05,-0.05) -- ++(0.1,0.1);
        \draw[thick] (Y) ++(-0.05,0.05) -- ++(0.1,-0.1);
        \draw[fill] (Z) circle (0.8pt);
        \draw[fill] (W) circle (0.8pt);
        
        % Dotted line
        \draw[dotted, thick] (Z) -- (Z') node[midway, right, xshift=-3pt, yshift=-1pt] {$\lambda$};
        
        % Paths from X to Y
        \tikzset{midarrow/.style={decoration={markings, mark=at position 0.5 with {\arrow{>}}}, postaction={decorate}}}
        \draw[midarrow] (X) -- (Y);
        \draw[midarrow] (X) -- (Z);
        \draw[midarrow] (Z) -- (Y);
        \draw[midarrow, thick] (X) -- (Z');
        \draw[midarrow, thick] (Z') -- (Y);\draw[midarrow] (X) -- (W);\draw[midarrow] (W) -- (Y);
        
        % Labels
        \node[above left] at (X) {$-x$};
        \node[above right] at (Y) {$x$};
        \node[above] at (Z) {$ry$ (with mass $1-\alpha$)};
        \node[below] at (W) {$-ry$ (with mass $\alpha$)};
    \end{tikzpicture}
    \caption{\cref{ex:lecam_b} in the case $m=1$ with the main admissible paths. The dotted line represents the addition of a small density in $\nu$ which allows for a new shorter path (drawn thicker) that is shorter which is the reason for the significant offset in \ac{fdtm}.}
    \label{fig:lecam_b}
\end{figure}

\Cref{fig:lecam_b} illustrates the reasoning behind \cref{ex:lecam_b}, which is to add the smallest amount of mass allowed by the $(a,b)$-standard assumption to enable a shorter path that drastically changes the \ac{fdtm}.
The presence of a small atom of mass $\alpha$ is necessary to ensure that the shorter path admissible only for $\nu$ is indeed shorter \ac{fdtm}-wise.
The $\rho$ part of the measure does not play a significant role besides simply holding the rest of the mass far away from the setup of the example so that it doesn't interfere with the reasoning. It is not defined as an atom to ensure the assumption that there are no atoms of mass greater than $m$. 

Notice that the measures in \cref{ex:lecam_b} do not have connected support.
However, a curve of small enough density may be added to connect all components of the support, taking sufficiently long detours away from the origin so that it does not enable shorter paths. Provided that this curve has a mass small enough that it changes the \ac{dtm} by a negligible amount, the result still holds with different constants.
The technical computations required to establish the inequality in \cref{ex:lecam_b} are omitted here, as they do not involve any particularly deep arguments.

The assumption that $\K$ is not a segment in \cref{thm:intro:minimax} ensures due to convexity that one can find a $2$-dimensional disc within $\K$ in which one can fit the example measures from \cref{ex:lecam_b}.
Moreover, the dependency in $\|x-y\|^{\beta+1}$ comes naturally from the scaling behavior of the \ac{fdtm} as discussed in \cref{app:restriction_compact}.
\Cref{thm:intro:minimax} is then obtained using Le Cam's Lemma with \cref{ex:lecam_b} and letting $\epsilon$ be of order $n^{-1/b}$ so that $\dtv(\mu_1,\mu_2)$ is of order $n^{-1}$.
In the case where $b=1$, the lower bound of order $n^{-1/2}$ can be obtained by considering two atoms on a line with a uniform density on the segment between them, and creating a second measure by shifting a small amount of mass from one atom to another.

\subsection{Practical Computations and Heuristics}
\label{sec:algorithm}

Regarding the computational complexity of the estimator, notice that the restriction of $D_{\hat\mu_n}$ to the sample points $\XX_n$ is the metric of a complete weighted graph with vertices $\XX_n$ and weights
\begin{align*}
    w_{x,y}
    = \int_{[x,y]} d_{\hat\mu_n}
    = \|x-y\| \int_0^1 d_{\hat\mu_n}\bLp(1-t)x + ty\bRp^\beta \d t~,
\end{align*}
where, letting $k = \lfloor mn\rfloor$ and $d^i(z,\XX_n)$ be the distance from $z$ to its $i$-th neighbor in $\XX_n$,
\begin{align*}
    d_{\hat\mu_n}(z)^p
    = \frac1m \int_0^m \delta_u(z)^p \d u
    = \frac1m \Lp\sum_{i=1}^{k} \frac1n \bLp d^i(z,\XX_n)\bRp^p + \frac{mn-k}{n} \bLp d^{k+1}(z,\XX_n)\bRp^p\Rp~.
\end{align*}
While theoretically possible, computing such metric is unreasonable in practice, hence we propose a few methods to reduce time complexity while ensuring that the limiting object remains the same.

\medskip\noindent
\textit{Selecting edges.}
When considering the complete graph over $\XX_n$, the quadratic amount of edges affects both the computation time of weights and the shortest path algorithm.
In the context of classic Fermat distance, the graph edges may be restricted to a subset of order $n\log(n)$ by pruning the graph, connecting each vertex to its $\log(n)$ closest neighbors.
It is indeed guaranteed that with high probability such procedure doesn't affect the Fermat distance as $n$ grows larger \cite[Proposition 2.16]{groismanNonhomogeneousEuclideanFirstpassage2022}.
This trick however cannot be performed in the context of \ac{fdtm} without altering the limiting object as it would effectively restrict paths to remain constrained within the support, which may exclude geodesics. If it is known that no geodesic exits the support, \eg, when the latter is convex, such method may be applied.

A similar method that can be used in general is to use a graph spanner that restrict the edges while preserving some edges to span all directions. This is done by Yao graphs \cite{yaoConstructingMinimumSpanning1982}, which divide the space from each vertex into a set of cones and keeps the shortest edges in each cone, effectively keeping one edge in each possible direction.
Building a $\log(n)$-nearest neighbors graph has complexity $\cO(n\log(n))$, whereas building a $\log(n)$-directions Yao graph has complexity $\cO(n^2)$ in general, but can be computed in $\cO(n\log(n))$ in dimension $d=2$ \cite{funkeEfficientYaoGraph2023}.

Both the pruned graph and the Yao graph are sub-graphs with $\cO(n\log(n))$ edges.
Yao graphs are an efficient method when $d=2$ as it is fast and do not restrict the paths to the support.
However, it does not scale well with dimension, hence a nearest neighbors method may be wiser in higher dimension, at the cost of potentially altering the limit.

\medskip\noindent
\textit{Computing weights.}
The exact formula for $w_{x,y}$ can be first simplified by rounding $m$ to $\lfloor mn\rfloor/n$ and replacing the integral with the discrete approximation of the segment
\begin{align*}
    w_{x,y}
    = \frac{\|x-y\|}{r} \sum_{t=1}^r \Lp\frac1k \sum_{i=1}^k d^i(x_t,\XX_n)^p \Rp^{\frac{\beta}{p}}
\end{align*}
where $k = \lfloor mn \rfloor$ and $x_1, \dots, x_r$ is a regular subdivision of the segment $[x,y]$.
By choosing $r=\log(n)$ to increase accuracy with $n$, each weight has a complexity of $n\log(n)$ to compute.
A faster approximation would be to first compute the \ac{dtm} on each sample points, which has a complexity of $\cO(n^2)$ but in practice does not appear to be the computational bottleneck for reasonably sized datasets. Then $w_{x,y}$ may be approximated by the average \ac{dtm} value $\frac12 \bLp d_\mu(x)^\beta + d_\mu(y)^\beta \bRp$.
While this approximation is not accurate on longer edges, it is on shorter ones as the \ac{dtm} is Lipschitz, which is sufficient if using a nearest neighbors approach which only selects shorter edges.

\medskip\noindent
\textit{Computing geodesics.}
Computing all geodesics from one source has a complexity of order $\cO((|E|+n)\log(n))$ using Dijkstra algorithm, where $|E|$ is the amount of edges.
Using the previous edge selection methods, this yields a complexity of $\cO(n\log(n)^2)$.

\section{Numerical Illustrations}
\label{sec:simulations}

Finally, we display a few simulations of the \ac{fdtm} on different setups to illustrate its general behavior.
Computations were made using the \textsc{Gudhi} library (see \url{https://gudhi.inria.fr/}), which provides an efficient way to compute the \ac{dtm}.

\subsection{Convergence of the Empirical FDTM on the Unit Circle}

\begin{figure}
    \centering
    \begin{minipage}{0.3\textwidth}
        \centering
        \includegraphics[width=\linewidth]{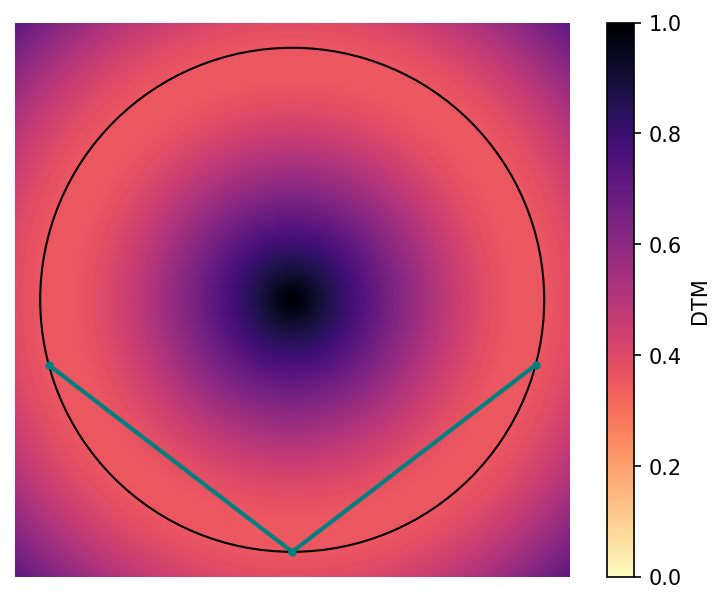}
        \subcaption{$m=0.2$, $\beta=1$}
    \end{minipage}
    \begin{minipage}{0.3\textwidth}
        \centering
        \includegraphics[width=\linewidth]{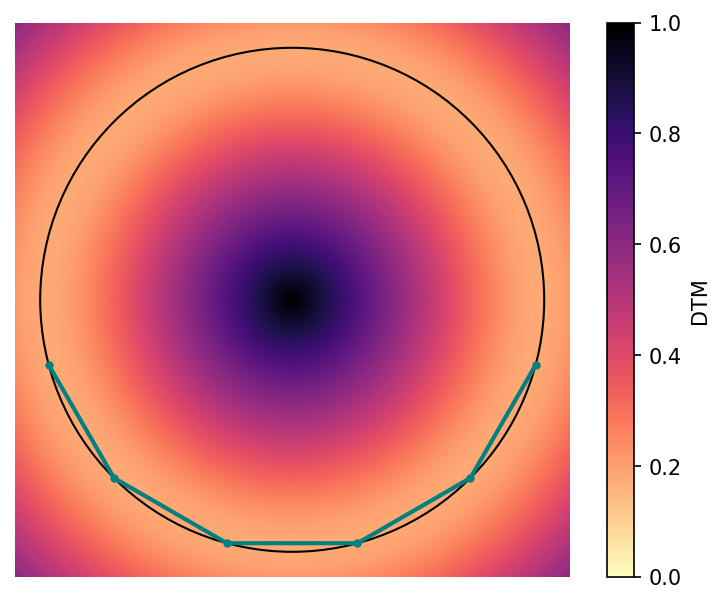}
        \subcaption{$m=0.1$, $\beta=1$}
    \end{minipage}
    \begin{minipage}{0.3\textwidth}
        \centering
        \includegraphics[width=\linewidth]{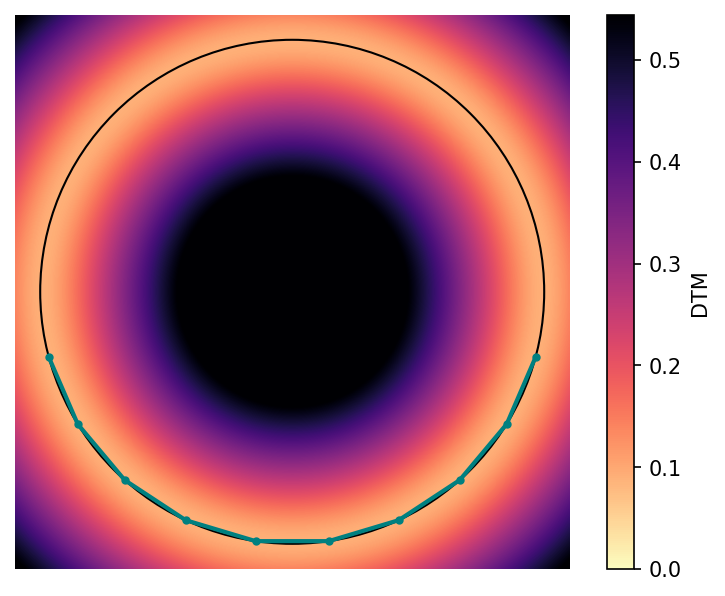}
        \subcaption{$m=0.05$, $\beta=1$}
    \end{minipage}
    
    \vspace{0.3cm}
    
    \begin{minipage}{0.3\textwidth}
        \centering
        \includegraphics[width=\linewidth]{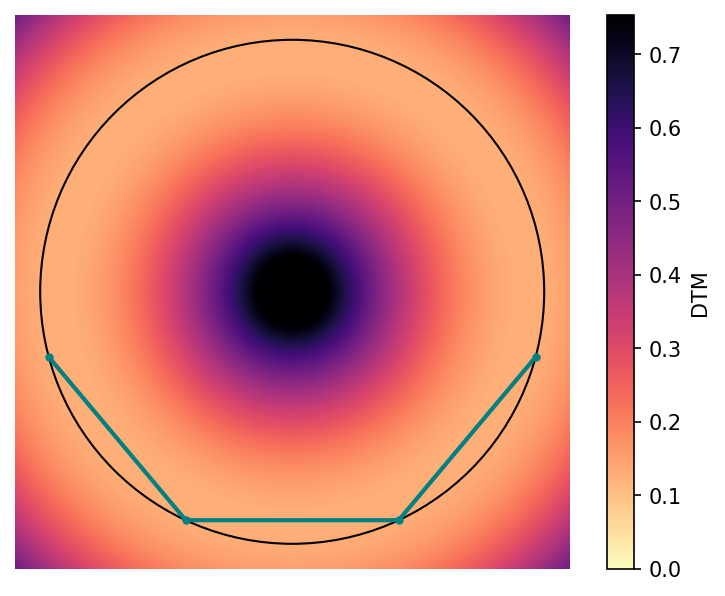}
        \subcaption{$m=0.2$, $\beta=2$}
    \end{minipage}
    \begin{minipage}{0.3\textwidth}
        \centering
        \includegraphics[width=\linewidth]{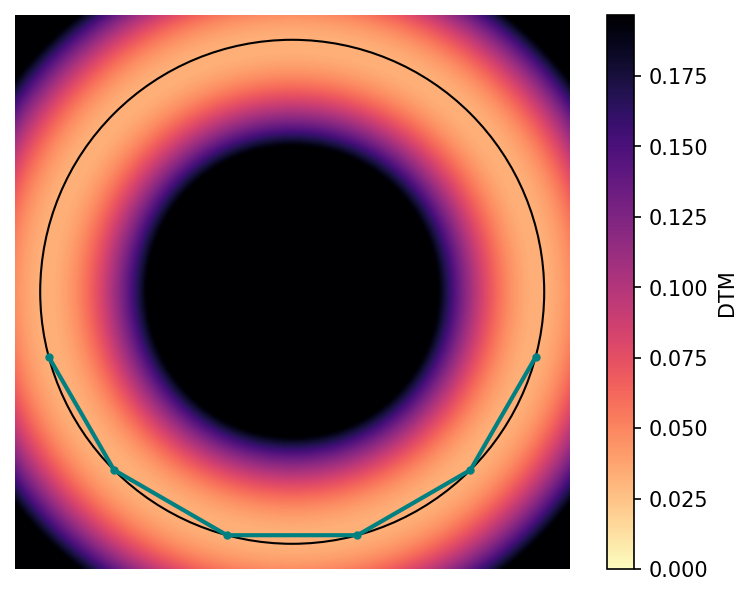}
        \subcaption{$m=0.1$, $\beta=2$}
    \end{minipage}
    \begin{minipage}{0.3\textwidth}
        \centering
        \includegraphics[width=\linewidth]{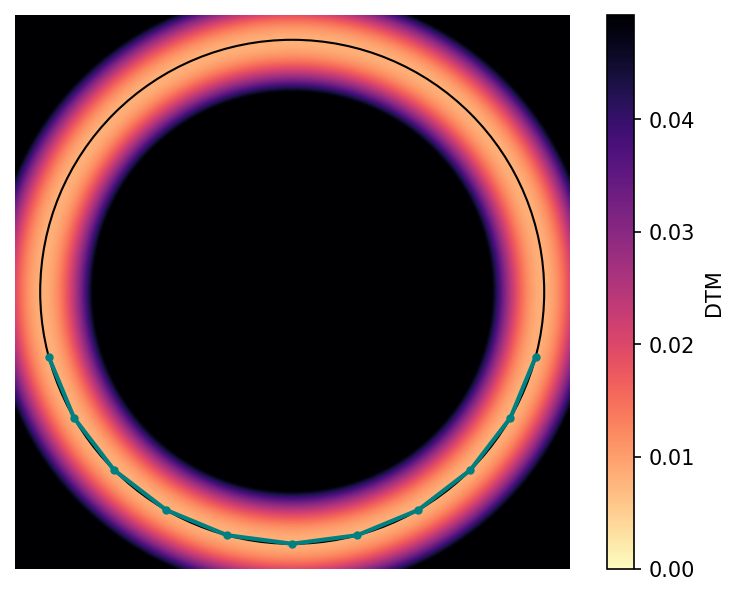}
        \subcaption{$m=0.05$, $\beta=2$}
    \end{minipage}
    
    \caption{An example geodesic on the unit circle for $p=2$ and various values of $m$ and $\beta$.
    Background colors represent the values of the \ac{dtm} cropped to a relevant range.
    The smaller $m$ or the higher $\beta$, the closer geodesics stay to the support circle.}
    \label{fig:circle_geodesics}
\end{figure}

\begin{figure}
    \centering
    \includegraphics[width=0.62\linewidth]{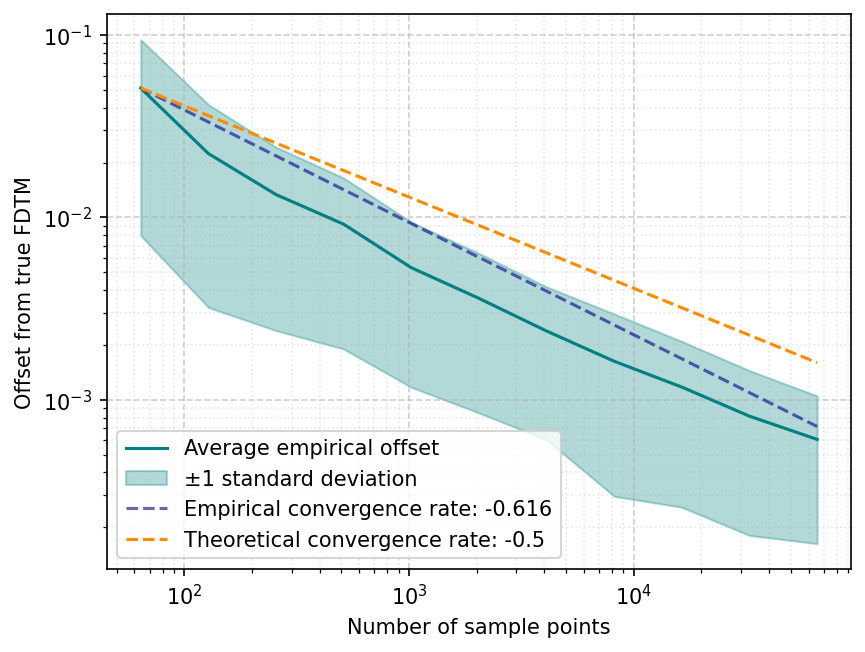}
    \caption{Convergence of the empirical \ac{fdtm} on the unit circle compared to the theoretical rate $n^{-0.5}$.
    The \ac{fdtm} parameters are $m=0.1$, $p=2$ and $\beta=2$. The empirical \ac{fdtm} is averaged over $500$ iterations of sampling the points.}
    \label{fig:circle_convergence}
\end{figure}

Consider $\mu$ the uniform distribution on the unit circle in $\RR^2$.
In this specific case, the \ac{fdtm} can be explicitly computed and geodesics between two endpoints on the circle are always made up of finite number of equally sized cords---see \cref{fig:circle_geodesics}.
We see  in \cref{fig:circle_convergence} that the theoretical upper bound from \cref{thm:intro:fdtm_estimate} (in this case with $b=1$) is coherent with the experiment.

\subsection{Comparison with the Fermat Distance}

\begin{figure}[ht]
    \centering
    \begin{minipage}{0.32\textwidth}
        \centering
        \includegraphics[width=\linewidth]{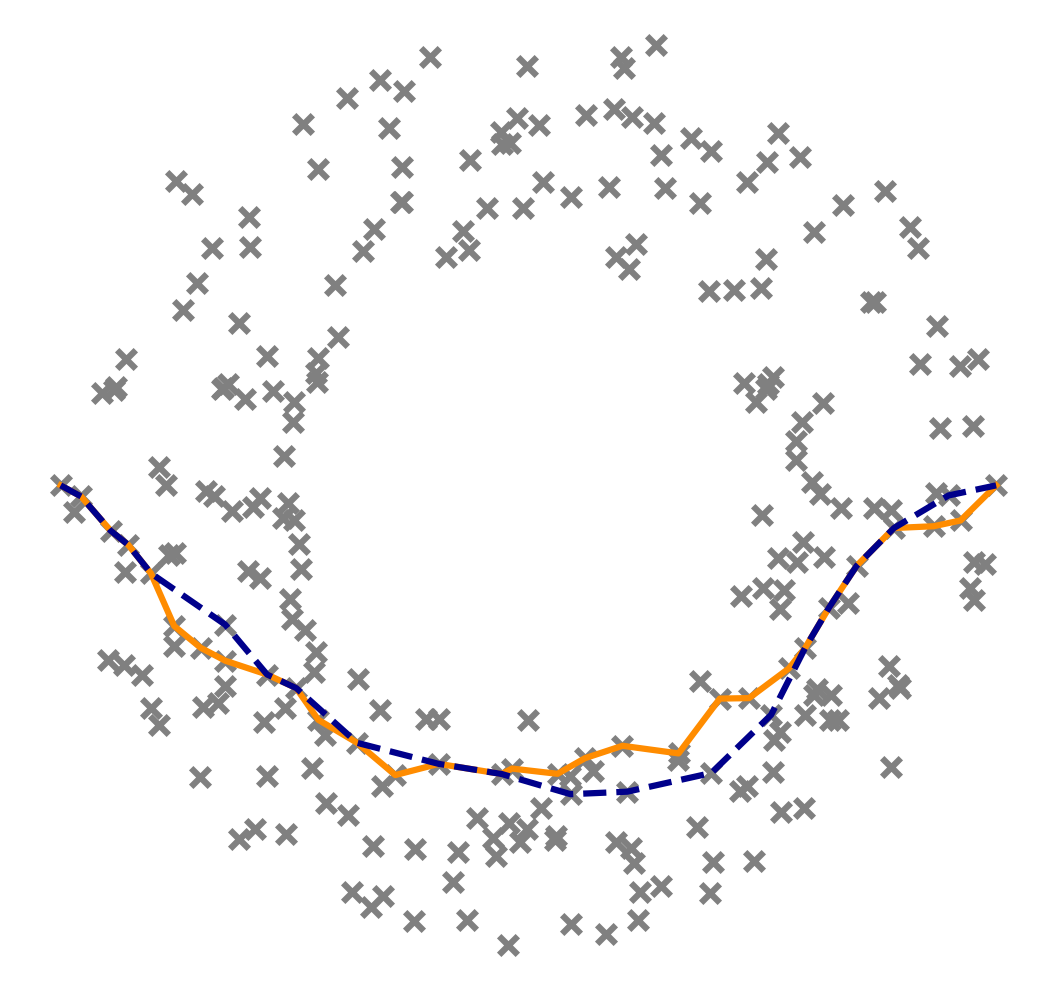}
        \subcaption*{$256$ sample points}
    \end{minipage}
    \begin{minipage}{0.32\textwidth}
        \centering
        \includegraphics[width=\linewidth]{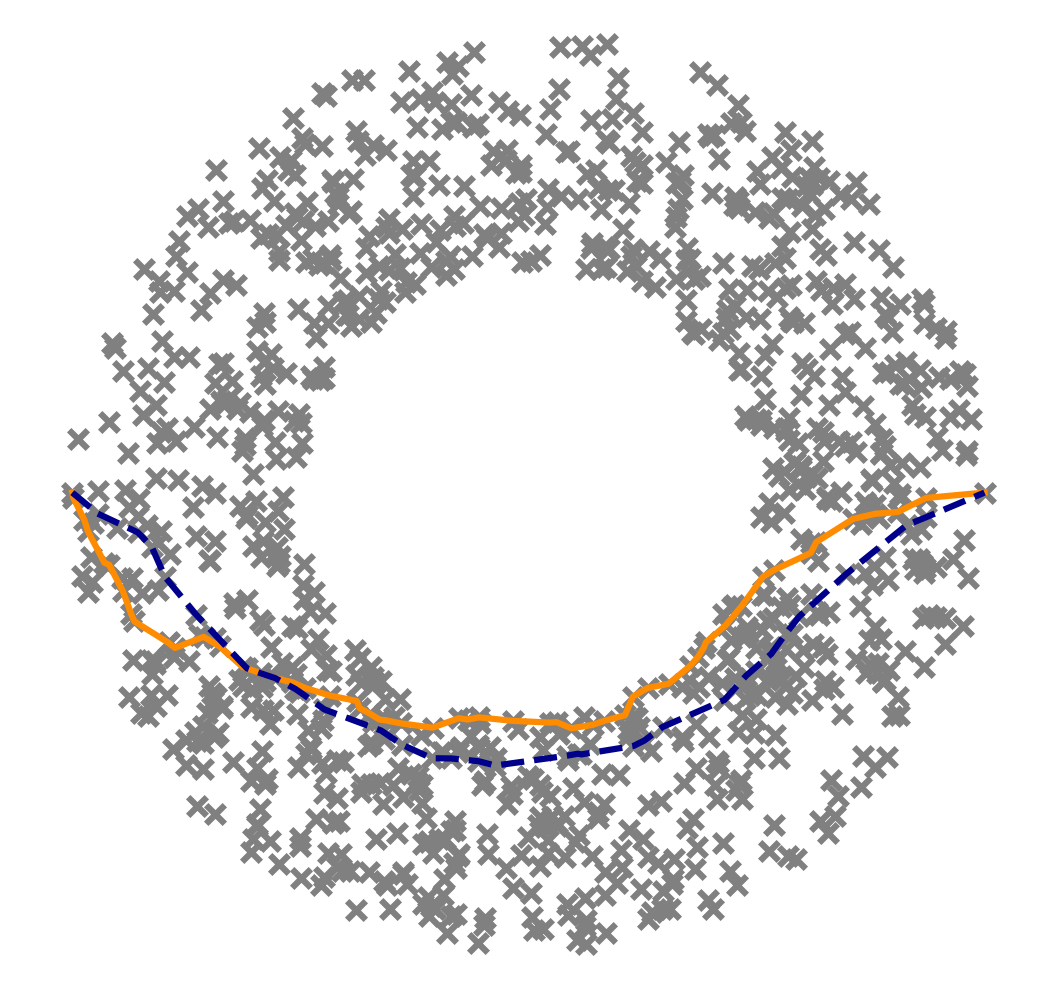}
        \subcaption*{$1024$ sample points}
    \end{minipage}
    \begin{minipage}{0.32\textwidth}
        \centering
        \includegraphics[width=\linewidth]{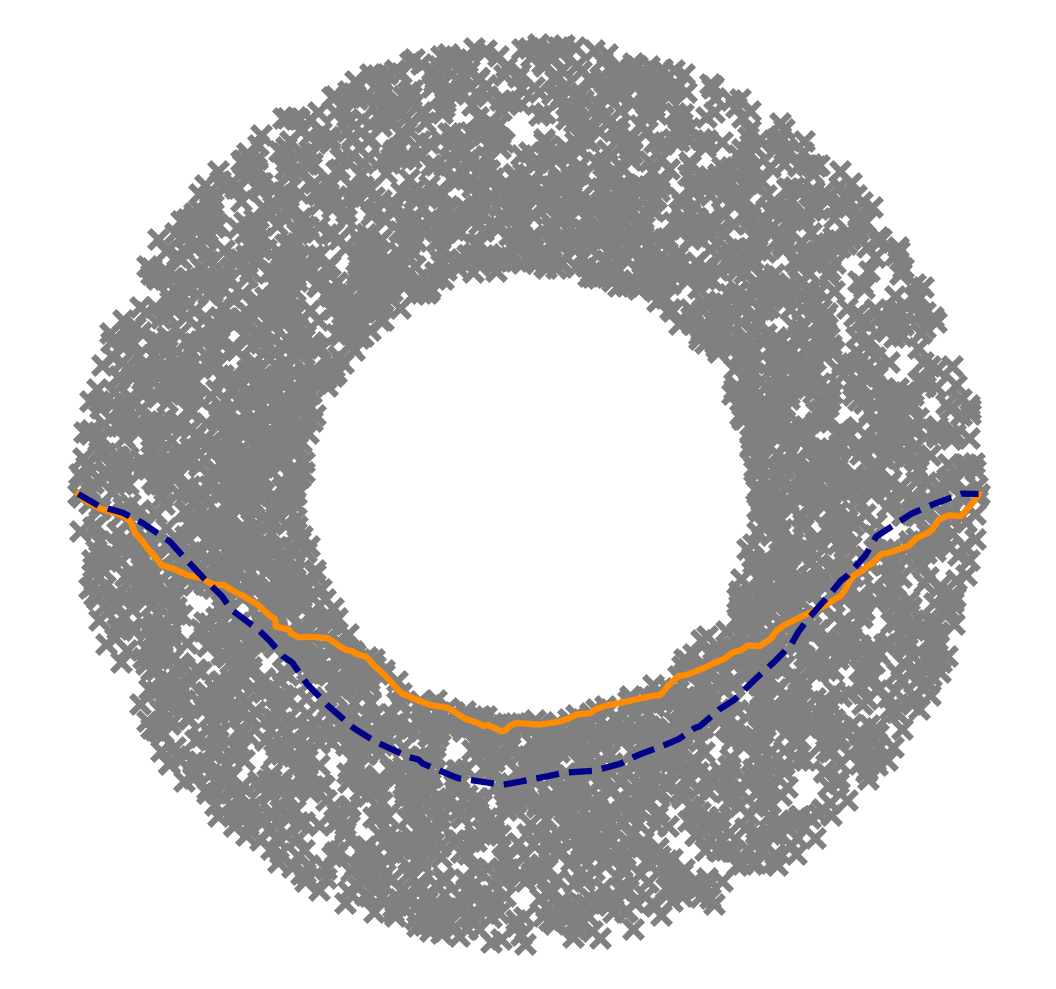}
        \subcaption*{$4096$ sample points}
    \end{minipage}
    
    \vspace{0.25cm}
    
    \begin{minipage}{0.32\textwidth}
        \centering
        \includegraphics[width=\linewidth]{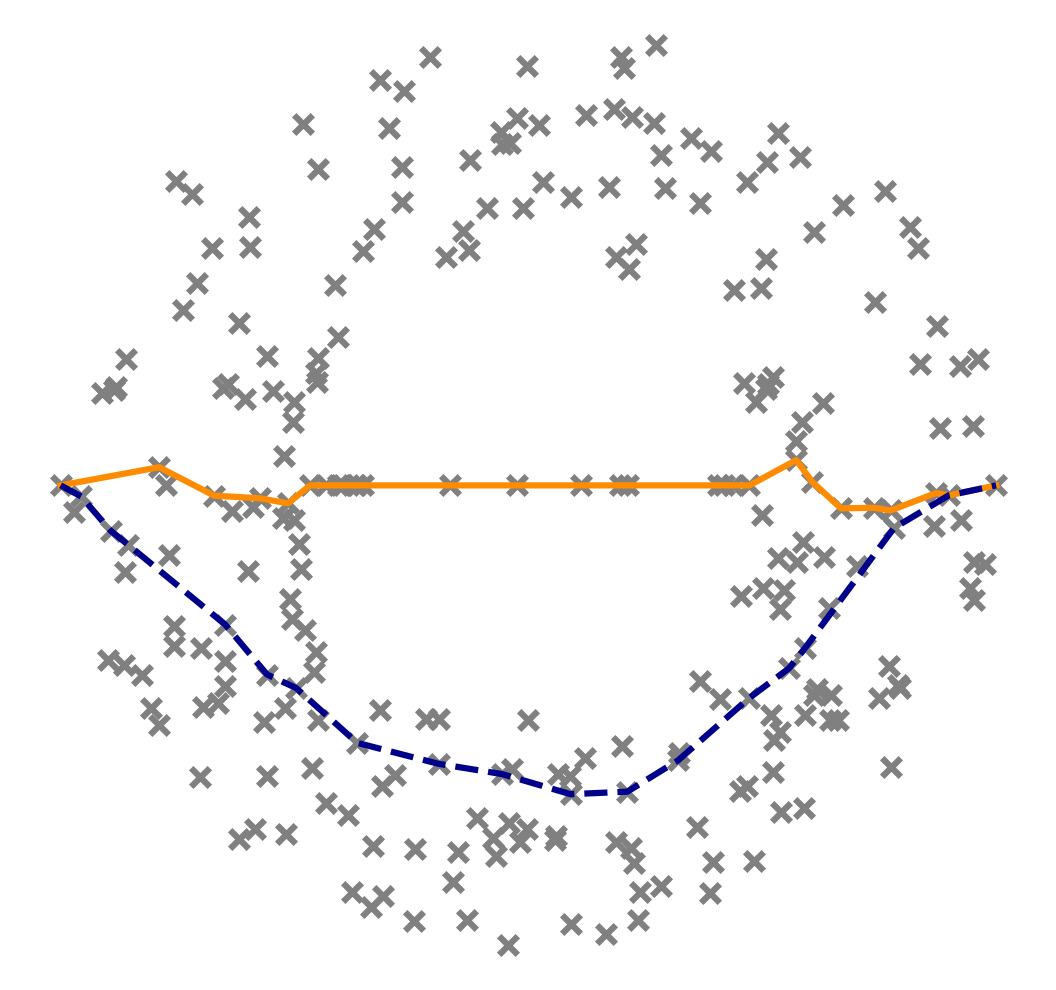}
    \end{minipage}
    \begin{minipage}{0.32\textwidth}
        \centering
        \includegraphics[width=\linewidth]{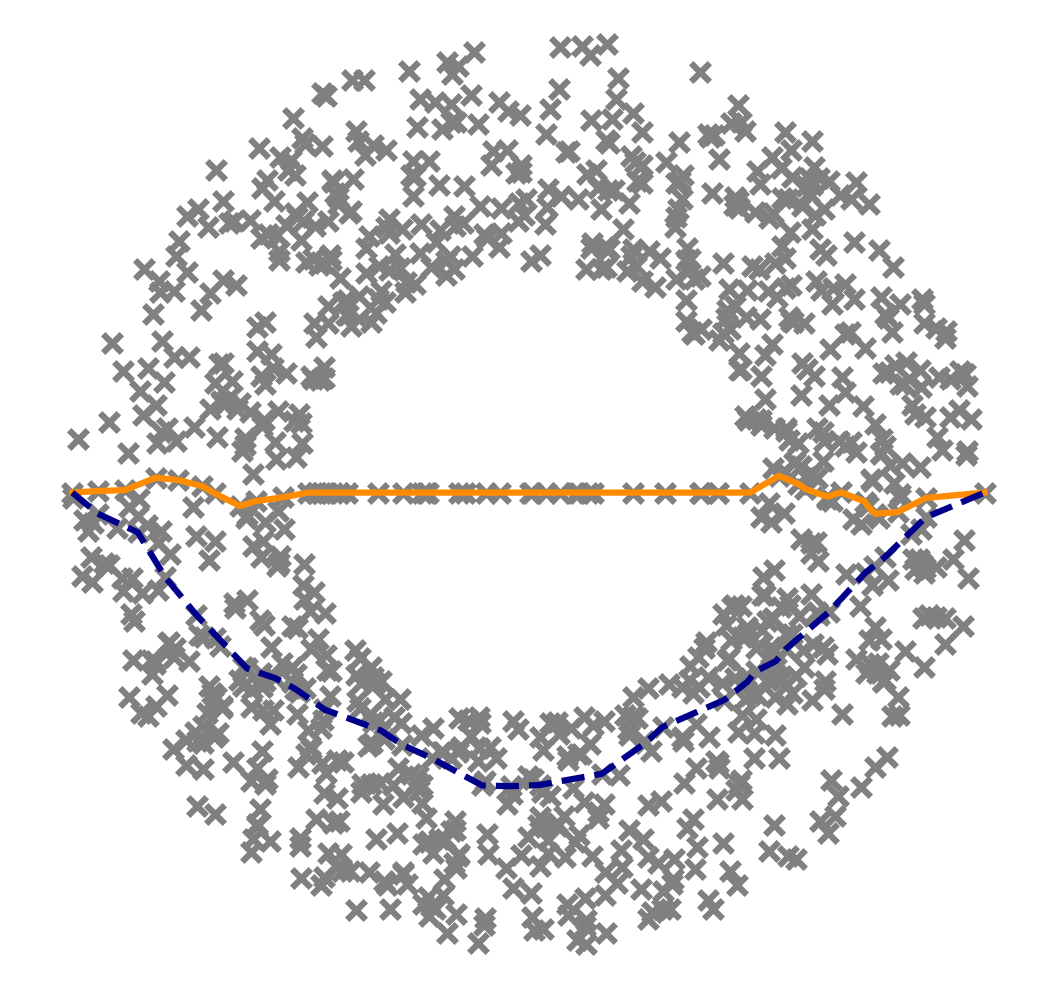}
    \end{minipage}
    \begin{minipage}{0.32\textwidth}
        \centering
        \includegraphics[width=\linewidth]{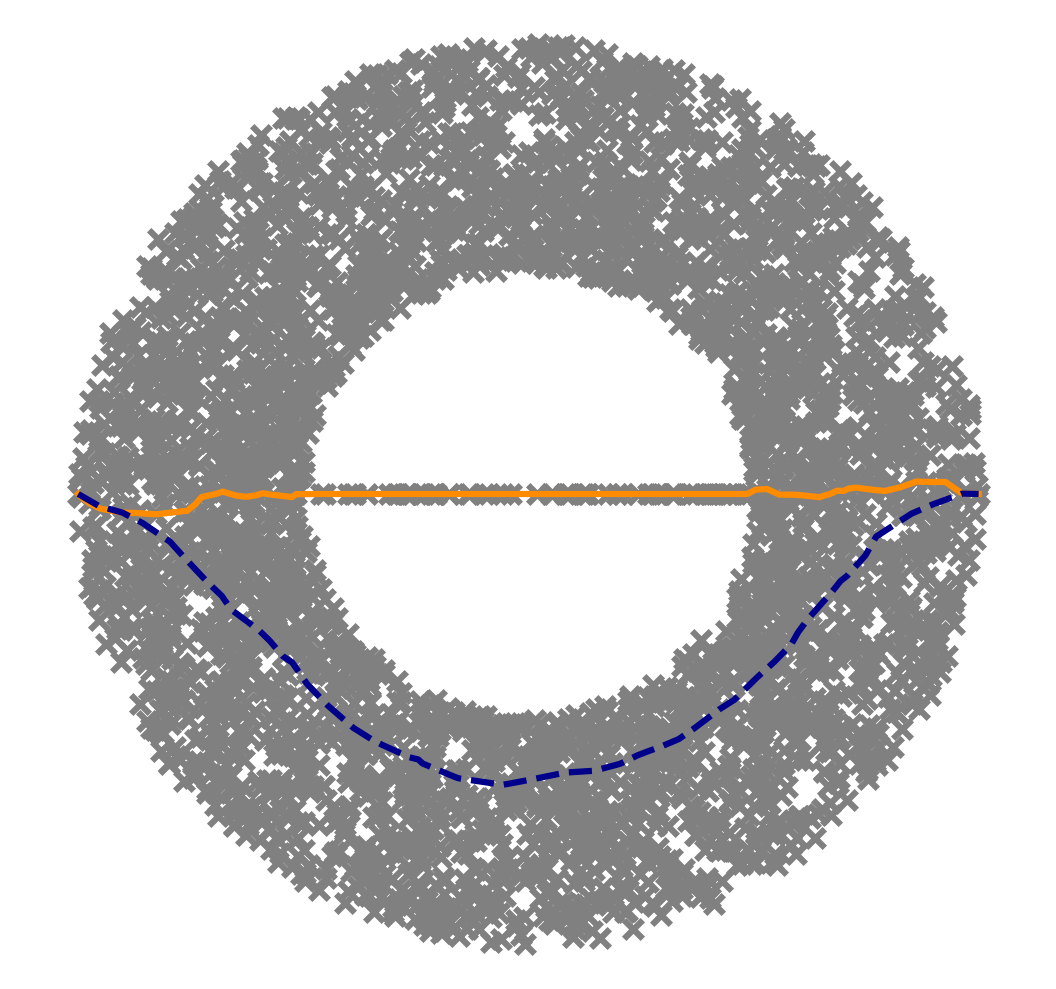}
    \end{minipage}
    
    \caption{Sample Fermat (orange solid line) and \ac{fdtm} (blue dashed line) geodesics.
    The sample Fermat parameter is $\alpha=1.1$ and the \ac{fdtm} parameters are $m=0.1$, $p=2$, $\beta=2$.}
    \label{fig:ring_geodesics}
\end{figure}

\begin{figure}
    \centering
    \includegraphics[width=0.5\linewidth]{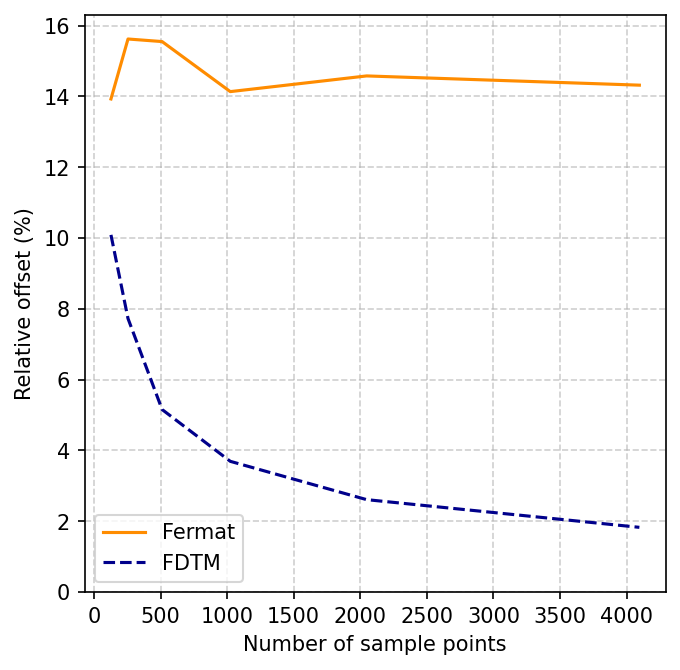}
    \caption{Relative offset in distance due to the addition of a shortcut.
    Each point is obtained by averaging the offset over $200$ random datasets.}
    \label{fig:ring_offset}
\end{figure}

Given $\alpha>1$, the sample Fermat distance is the complete graph metric with vertices the sample points and edges weights $w_{x,y} = \|x-y\|^\alpha$.
It has been shown \cite[Theorem 2.3]{groismanNonhomogeneousEuclideanFirstpassage2022} that the sample Fermat distance converges under appropriate normalization and regularity assumptions on the measure towards the (continuous) Fermat distance defined by \cref{eq:fermat}.
One shortcoming of the Fermat distance is its requirement to have a density supported on a manifold of fixed dimension. As a consequence, while the sample Fermat distance can be defined for any point cloud, there are no guarantees on its convergence if the underlying measure is a mixture of measures of different dimensions.

\Cref{fig:ring_geodesics} provides an example of a measure that can be modified to ``trick'' the sample Fermat distance by adding a shortcut of negligible mass but lower dimensionality, allowing for a more efficient path.
As the \ac{fdtm} is based on mass and not density, it is not affected by this shortcut, which highlights its robustness.
In each column, the bottom point cloud is obtained by shifting approximately $\sqrt{n}$ points (where $n$ is the total amount of points) among the top point cloud to create a shortcut in the middle. This means that the Total Variation distance between both point clouds vanishes as $n$ grows larger, but the typical distance between points is the same on the shortcut as on the ring so that the sample Fermat distance favors the shortcut.

\Cref{fig:ring_offset} plots the relative offset of the distance caused by the addition of the shortcut, \ie, $\frac{|l-l'|}{l}$ where $l$ is the distance for the point cloud without shortcut and $l'$ the distance with shortcut.
We see that the \ac{fdtm} offset vanishes as the number of points increases, whereas the Fermat offset remains consistent despite the Total Variation distance between both measures decreasing.

\section{Conclusion and Future Work}
\label{sec:conclusion}

In this work we introduced a new type of density-based metric inspired by the Fermat distance and derived quantitative properties of stability and estimation.
The \ac{fdtm} metric is defined without any restriction on the subject measure and our results hold for minimal assumptions on this measure.
Moreover, our proposed estimator of the \ac{fdtm} is naturally defined as the \ac{fdtm} of the empirical measure and its convergence is not influenced by the ambient dimension of the data.

Future work may investigate the design of an efficient estimation algorithm for practical applications based on the ideas developed in \cref{sec:algorithm}.
Theoretical questions like the optimal minimax bounds for the convergence of the empirical \ac{fdtm} or the generalization to non-Euclidean spaces are also of interest.
Moreover, in the optic of further adapting results coming from the Fermat literature, objects such as the Fermat graph Laplacians studied in \cite{garciatrillosFermatDistancesMetric2024} may be adapted to the \ac{fdtm} model, hopefully with analogous results.
Finally, as of now the link between Fermat distance and \ac{fdtm} is mainly reduced to the intuition that they should behave similarly.
The relationship between both metrics may be studied more thoroughly, building upon previous work such as \cite{biauWeightedKnearestNeighbor2011}, especially in the case where $m$ is chosen close to $0$.

\paragraph{Acknowledgements}
This work has been supported by the ANR TopAI chair in Artificial Intelligence (ANR-19-CHIA-0001) and by École Normale Supérieure de Paris. The authors also thank Laure Ferraris for contributing to a preliminary weaker stability result for the \ac{fdtm}.

\printbibliography

\newpage
\appendix

\section{Complementary Discussion}

The first part of this section details the discussion about the assumption of considering all objects within a large convex compact subset $\K\subset\RR^d$.
The second part details the fact that the \ac{fdtm} and Euclidean metrics share the same topology over the domain $\F$.

\subsection{Restriction to a Compact Set}
\label{app:restriction_compact}

As long as we restrict the study of the \ac{fdtm} to endpoints that belong to a compact set~$A$, the fact that $d_\mu$ is a Lipschitz and proper function ensures that geodesic paths remain in a greater compact set $B$.
Indeed, for all $x,y\in A$,
\[D_{\mu,\F}(x,y)
\le D_\mu([x,y])
\le \diam(A) \cdot \max_A d_\mu^\beta\]
and any path leaving a compact set $B$ chosen large enough would achieve a greater \ac{fdtm} length, making it impossible to be a geodesic.
Therefore, replacing $\F$ with the cropped domain $\F\cap B$ does not affect the \ac{fdtm} within $A$.
Moreover, the compact subset $\K$ defined as the convex hull of
\begin{align*}
    \bigcup_{x\in B} \ball\bLp x,\delta_{\mu,m}(x) \bRp
\end{align*}
contains all points that are relevant \ac{wrt} the \ac{dtm} in $B$, so that replacing $\mu$ with the push-forward measure $(\pi_\K)_*\mu$ where $\pi_\K$ is the projection over $\K$ does not affect the \ac{dtm} within $B$, nor does it affect the \ac{fdtm} within $A$ by extension.
Therefore, the assumption that all objects are included in a great compact convex set $\K$ is without loss of generality.

Now, notice that scaling through $h : x \mapsto \diam(K)^{-1} x$ sends $\K$ to a subset of unit diameter and that all objects considered behave coherently with this scaling: all distances are multiplied by $\diam(K)^{-1}$, including the \ac{dtm}. Therefore, for all $x,y\in\K$,
\begin{itemize}
    \item $\Gamma_{h(\F)}(h(x),h(y)) = \{h\circ\gamma, \gamma \in \Gamma_\F(x,y)\}$.
    \item $\forall\gamma\in\Gamma_\F(x,y),~ D_{h_*\mu}(h\circ\gamma) = \diam(K)^{-(\beta+1)} D_\mu(\gamma)$.
    \item $\Gamma_{h_*\mu,h(\F)}^\star(x,y) = \{h\circ\gamma, \gamma \in \Gamma_{\mu,\F}^\star(x,y)\}$.
    \item $D_{h_*\mu,h(\F)}(h(x),h(y)) = \diam(K)^{-(\beta+1)} D_{\mu,\F}(x,y)$.
\end{itemize}
As a consequence, any result can be proven with the assumption that $\diam(\K)=1$, then extended to the general case by scaling accordingly with $\diam(\K)$.

Finally, let us detail \cref{eq:dtm_uniform_bound}:
Since $\supp(\mu)\subset\K$, all the mass lies entirely within a radius of at most $\diam(\K)$ from any $x\in\K$, hence $\delta_{\mu,u}(x) \le \diam(\K)$ for any $u \in (0,1)$ and thus $d_\mu(x) \le \diam(\K)$.
As a consequence, since admissible paths for $x,y\in\K$ are included in the convex hull of $\F\cup\{x,y\}$ and therefore in $\K$, it follows that $d_\mu \le \diam(\K)$ over any path $\gamma\in\Gamma_\F(x,y)$.

\subsection{Equivalent Topologies}
\label{app:equivalent_topology}

Here we detail the following statement from \cref{sec:geodesic_existence}.

\begin{lemma}
\label{lemma:equivalent_topology}
$D_{\mu,\F}$ defines a metric over $\F$ and its topology is equivalent to the Euclidean topology.
\end{lemma}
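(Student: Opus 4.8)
The plan is to verify the three metric axioms and then prove the topological equivalence by sandwiching $D_{\mu,\F}$ between two multiples of the Euclidean metric on $\F$, using the bounds already established in the paper. Throughout I work on $\F$ with endpoints in $\F$ so that the triangle inequality is available (as noted in the excerpt, it may fail for general endpoints outside $\F$).

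\emph{Metric axioms.} Symmetry is immediate: reversing the parameterization of a path $\gamma\in\Gamma_\F(x,y)$ yields a path in $\Gamma_\F(y,x)$ with the same \ac{fdtm}, since $D_\mu(\gamma)=\int_0^1 d_\mu(\gamma(t))^\beta\|\dot\gamma(t)\|\,\d t$ is invariant under $t\mapsto 1-t$. Non-negativity is clear, and $D_{\mu,\F}(x,x)=0$ via the constant path. For separation, suppose $x\neq y$ with $x,y\in\F$. First I would invoke \cref{lemma:dtm_lower_bound}; however, since that lemma needs $\mu$ to have no atom of mass $\ge m$, I would instead argue directly: for any admissible path $\gamma\in\Gamma_\F(x,y)$ one has $|\gamma|\ge\|x-y\|>0$, and I claim $d_\mu$ is bounded below by a positive constant on a neighborhood of the segment (or more robustly, on any path of bounded Euclidean length staying in $\K$) — indeed $d_\mu$ is continuous and positive except at atoms of mass $\ge m$, of which there are only finitely many, and a geodesic can be perturbed to avoid them. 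A cleaner route: by \cref{corol:euclidean_bound_local}, any geodesic $\gamma$ has $|\gamma|\lesssim\|x-y\|$, so it stays in a fixed compact set $B$; on $B$, $d_\mu$ attains a positive infimum $\sigma_B$ away from the (finitely many) bad atoms, and since $\gamma$ can be taken to avoid them we get $D_{\mu,\F}(x,y)=D_\mu(\gamma)\ge \sigma_B^\beta\,|\gamma|\ge\sigma_B^\beta\,\|x-y\|>0$. The triangle inequality follows by concatenation: given admissible paths from $x$ to $y$ and from $y$ to $z$ with $x,y,z\in\F$, their concatenation is admissible (the junction at $y\in\F$ imposes no straight-line constraint), and $D_\mu$ is additive under concatenation, so $D_{\mu,\F}(x,z)\le D_{\mu,\F}(x,y)+D_{\mu,\F}(y,z)$.

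\emph{Topological equivalence.} I would establish the two-sided comparison
\begin{align*}
    \sigma_\K^\beta\,\|x-y\|
    \;\le\; D_{\mu,\F}(x,y)
    \;\le\; \diam(\K)^\beta\,\|x-y\|
\end{align*}
for all $x,y\in\F$, where $\sigma_\K>0$ is a uniform lower bound for $d_\mu$ on $\K$ away from its finitely many heavy atoms — or, in the atom-free case, simply $\min_\K d_\mu>0$. The upper bound is immediate from the straight segment: $D_{\mu,\F}(x,y)\le D_\mu([x,y])=\|x-y\|\int_0^1 d_\mu((1-t)x+ty)^\beta\,\d t\le \diam(\K)^\beta\|x-y\|$ by \cref{eq:dtm_uniform_bound}. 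The lower bound follows as above from $D_\mu(\gamma)\ge\sigma_\K^\beta\,|\gamma|\ge\sigma_\K^\beta\,\|x-y\|$ for the (or any) geodesic $\gamma$. These inequalities show $\mathrm{id}:(\F,\|\cdot\|)\to(\F,D_{\mu,\F})$ is bi-Lipschitz, hence a homeomorphism, which gives the claim.

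\emph{Main obstacle.} The delicate point is the positive lower bound on $D_{\mu,\F}(x,y)$ when $\mu$ may have atoms of mass $\ge m$, since then $d_\mu$ can vanish and $\min_\K d_\mu = 0$. The fix is that such atoms are finite in number (at most $\lceil 1/m\rceil$) and a geodesic — whose existence and bounded Euclidean length are guaranteed by \cref{thm:intro:geodesic} and \cref{corol:euclidean_bound_local} — can always be chosen to avoid these isolated zeros, along which a uniform positive bound on $d_\mu$ then holds; alternatively, if the authors intend the lemma only under the standing no-heavy-atom hypothesis, \cref{lemma:dtm_lower_bound} directly supplies $\sigma_\K=\min d_\mu>0$ and the argument is clean. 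I would state the lemma in whichever of these two forms matches the surrounding text and carry out the bi-Lipschitz comparison accordingly.
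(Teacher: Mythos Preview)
Your proposal has two genuine gaps. First, it is circular: you invoke \cref{thm:intro:geodesic} and \cref{corol:euclidean_bound_local}, but the paper's proof of \cref{thm:intro:geodesic} explicitly uses \cref{lemma:equivalent_topology} to transfer compactness of $\F$ from the Euclidean to the \ac{fdtm} topology, so those results are not yet available here. Second, the bi-Lipschitz comparison you aim for is \emph{false} in general. If $\mu$ has an atom of mass $\ge m$ at some $z\in\F$ then $d_\mu(z)=0$, and the $1$-Lipschitz property of $d_\mu$ gives
\[
D_{\mu,\F}(z,y)\le D_\mu([z,y])\le\int_0^{\|z-y\|} s^\beta\,\d s=\frac{\|z-y\|^{\beta+1}}{\beta+1},
\]
so $D_{\mu,\F}(z,y)/\|z-y\|\to 0$ as $y\to z$: the metrics are only topologically equivalent, not Lipschitz equivalent. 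Your proposed fix --- choosing or perturbing the geodesic to avoid the isolated zeros --- cannot help: when $z$ itself is an endpoint no path avoids it, and in any case a lower bound on one perturbed path says nothing about the infimum over \emph{all} admissible paths.

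The paper sidesteps both issues by working directly with arbitrary admissible paths (no geodesics needed) and aiming only for topological equivalence. Since the zeros of $d_\mu$ form a finite set (at most $\lfloor 1/m\rfloor$ heavy atoms), for each $x$ and all small enough $r>0$ the sphere $\cS(x,r)$ contains none of them, so $\delta:=\min_{\cS(x,r)}d_\mu>0$ by compactness. Every $\gamma\in\Gamma_\F(x,y)$ with $\|x-y\|>r$ crosses $\cS(x,r)$, and the $1$-Lipschitz property of $d_\mu$ then lower bounds $D_\mu(\gamma)$ by a positive $\epsilon=\epsilon(r,\delta,\beta)$ coming from the portion of $\gamma$ near the crossing point. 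This simultaneously yields separation and shows every Euclidean ball contains an \ac{fdtm} ball; the reverse inclusion follows from the upper bound $D_{\mu,\F}(x,y)\le\diam(\K)^\beta\|x-y\|$ that you already have.
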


\begin{proof}
$D_\mu$ is immediately non-negative and symmetric.
Provided that $x,y,z\in \F$, if $\gamma\in\Gamma_\F(x,y)$ and $\gamma'\in\Gamma_\F(y,z)$ then the concatenation of both paths belongs to $\Gamma_\F(x,z)$. The triangular inequality follows from considering the infimum over $\gamma$ and $\gamma'$.

Now, let us show that in $\RR^d$, any ball for the Euclidean metric contains a ball for the \ac{fdtm} metric.
First, the \ac{dtm} may be equal to $0$ only over a finite set, as $d_\mu(x)=0$ if and only if $\mu$ has an atom of mass at least $m$ at $x$.
Then, for all $x\in\RR^d$ and small enough $r>0$, $d_\mu$ is lower bounded over the sphere $\cS(x,r)$ by some $\delta>0$.
Given any point $y$ at Euclidean distance larger than $r$ from $x$, any path $\gamma\in\Gamma_\F(x,y)$ must intersect $\cS(x,r)$ and its \ac{fdtm} length can be lower bounded by some $\epsilon = f(r,\delta,\beta) > 0$ by Lipchitz property of the \ac{dtm}.
Therefore, for all small enough $r>0$ there exists a \ac{fdtm} ball of radius $\epsilon$ included in the Euclidean ball of radius $r$.
This implies that the \ac{fdtm} between two distinct points is positive---hence the \ac{fdtm} is a metric---and coincidentally shows that the \ac{fdtm} topology is finer than the Euclidean topology.

Conversely, the Euclidean topology is finer than the \ac{fdtm} as a direct consequence of the continuity of $D_\mu$ \ac{wrt} the Euclidean topology.
This concludes that both topologies are equivalent.
\end{proof}

\section{Upper Bound on the Euclidean Length of Geodesics}
\label{app:geodesics}

This section provides the missing proofs of \cref{sec:geodesic_length_general} needed to obtain \cref{thm:intro:euclidean_bound}.
We first consider the simpler case where the domain $F$ is the entire space $\RR^d$ in \cref{app:geodesic:details_no_domain} to showcase the main reasoning without too many technicalities.
\Cref{app:geodesic:details_domain} then generalizes the reasoning to work with any $F$.
\Cref{app:euclidean_bound_local} finally details the computations required to obtain \cref{corol:euclidean_bound_local}.

\subsection{Details without Domain Constraints}
\label{app:geodesic:details_no_domain}

We provide in this section the details omitted in \cref{sec:geodesic_length_general} in the case where $\F = \RR^d$, \ie, disregarding any constraint on the domain.
The resulting bound on the length of geodesics is smaller than in the general case.
The first step is \cref{lemma:geodesic:gamma_modif_main}, which we rewrite below in the specific case where no domain constraints are considered.

\begin{lemma}
\label{lemma:geodesic:gamma_modif_no_domain}
Let $\gamma\in\Gamma(x,y)$ and $\delta>0$.
Then there exists a modified path $\tilde\gamma = \tilde\eta + \tilde\omega \in \Gamma(x,y)$ such that
\begin{enumerate}[(i)]
    \item $\tilde\eta$ is a modification of the sub-$\delta$ sections $[\gamma]^\delta$ and satisfies
    \begin{align}
        \label{eq:modification1_eta}
        D_\mu(\tilde\eta)
        \le c \delta^{\beta+1}
    \end{align}
    where $c = \frac{16\cdot5^\beta}{m}$.
    \item $\tilde\omega \subset [\gamma]_\delta$ is a subset of the super-$\delta$ sections of $\gamma$.
\end{enumerate}
\end{lemma}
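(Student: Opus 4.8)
The plan is to prove Lemma~\ref{lemma:geodesic:gamma_modif_no_domain} by working sub-level by sub-level on the sub-$\delta$ sections of $\gamma$ and exploiting the covering property of $L_{\mu,\delta}$ from Lemma~\ref{lemma:dtm_covering}. First I would observe that, by definition, $[\gamma]^\delta$ consists of (possibly countably many) connected arcs of $\gamma$ entirely contained in the sub-level set $L_{\mu,\delta}$. By Lemma~\ref{lemma:dtm_covering}, this set can be covered by at most $\frac{2}{m}$ balls of radius $4\delta$. Since $L_{\mu,\delta}$ is a sub-level set of the $1$-Lipschitz function $d_\mu$ (cf.~\cref{eq:dtm_lip}), it is in particular connected-by-bounded-detours in a controlled way: any two of its points that lie in a common connected component can be joined inside $L_{\mu,2\delta}$ (or a slightly larger sub-level, which is where the constant $\rho$ and factor $5^\beta$ will come from) by a path of Euclidean length bounded by a constant times $\delta$, because the union of the covering balls has diameter $O(\delta/m)$.

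The key construction step is then to replace the whole collection of sub-$\delta$ arcs by a single short detour. Concretely: let $a$ be the first point at which $\gamma$ enters $L_{\mu,\delta}$ and $b$ the last point at which it leaves; I would define $\tilde\eta$ to be a path from $a$ to $b$ built by chaining together straight segments between centers of the covering balls (a ``traveling-salesman''-style tour through at most $\frac{2}{m}$ points, each pair at distance $\le$ a constant multiple of $\delta$). This gives $|\tilde\eta| \le c_1\delta$ with $c_1 = \frac{16}{m}$ after accounting for the number of balls and the radius-plus-diameter bookkeeping. For the \ac{fdtm} estimate, note that $\tilde\eta$ stays within a $\delta$-neighborhood of $L_{\mu,\delta}$, hence within $L_{\mu,\rho\delta}$ for a suitable absolute $\rho$ (something like $\rho = 5$), so $d_\mu \le \rho\delta$ along $\tilde\eta$ and therefore $D_\mu(\tilde\eta) = \int_{\tilde\eta} d_\mu^\beta \le (\rho\delta)^\beta |\tilde\eta| \le \rho^\beta \delta^\beta \cdot c_1 \delta = c_2 \delta^{\beta+1}$ with $c_2 = \frac{16\cdot 5^\beta}{m}$. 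The complementary part $\tilde\omega$ is simply what remains of $\gamma$ after excising all of $[\gamma]^\delta$ — i.e. the super-$\delta$ sections $[\gamma]_\delta$ — reconnected at the points $a$ and $b$ (and any intermediate re-entries) by the detours just built; by construction $\tilde\omega \subset [\gamma]_\delta$, and $\tilde\gamma = \tilde\eta + \tilde\omega$ is still a path from $x$ to $y$, hence in $\Gamma(x,y)$ since there is no domain constraint to worry about.

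I would be slightly careful about two bookkeeping points. First, $[\gamma]^\delta$ may have infinitely many components and may re-enter $L_{\mu,\delta}$ many times; but since the total mass argument bounds only the number of $4\delta$-separated points, I should argue that all re-entries happen within the same $O(\delta/m)$-diameter cluster (or a bounded number of such clusters), so that a single bounded detour suffices for each cluster and the constants $c_1, c_2$ do not degrade — this is where one has to be honest that the ``$\frac{2}{m}$ balls'' cover \emph{all} of $L_{\mu,\delta}$, so the relevant cluster the geodesic visits has the claimed diameter bound. Second, I need the detour path to actually be rectifiable and to have its velocity controlled; parameterizing by constant Euclidean speed as in the paper's convention handles this. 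The main obstacle, I expect, is getting the geometry of $L_{\mu,\delta}$ exactly right: Lemma~\ref{lemma:dtm_covering} controls the \emph{packing/covering number at scale $4\delta$} but not directly the \emph{diameter} of connected components, so one must derive that a connected subset covered by $\frac{2}{m}$ balls of radius $4\delta$ has diameter $\le \frac{16}{m}\delta$ (roughly: chain through overlapping balls), and then confirm that the short detour stays inside the slightly inflated sub-level $L_{\mu,\rho\delta}$ — tracking the precise value of $\rho$ and the resulting $c_2$ is the delicate part, and is presumably why the full argument is deferred to the appendix.
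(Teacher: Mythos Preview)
Your overall strategy---use the covering of $L_{\mu,\delta}$ by at most $2/m$ balls of radius $4\delta$ from \cref{lemma:dtm_covering}, replace the sub-$\delta$ arcs by polygonal chains through the ball centers, and bound $d_\mu\le 5\delta$ along the detour via the Lipschitz property---is exactly the paper's, and your derivation of $c_2 = 16\cdot 5^\beta/m$ is the right computation.

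There is, however, a genuine gap in the construction. Your ``let $a$ be the first entry and $b$ the last exit, build one detour'' step tacitly assumes that $a$ and $b$ can be joined by a short chain of overlapping covering balls. But $L_{\mu,\delta}$ may be disconnected with components arbitrarily far apart: the covering lemma bounds the \emph{number} of balls, not their mutual distances, so your assertion that ``each pair [of centers is] at distance $\le$ a constant multiple of $\delta$'' is false in general. Between $a$ and $b$ the original path $\gamma$ may make long super-$\delta$ excursions connecting distant components of $L_{\mu,\delta}$, and no detour inside $L_{\mu,5\delta}$ can bridge them. Your cluster fallback in the bookkeeping paragraph does not rescue this either: entries into different clusters can interleave (e.g.\ $\gamma$ visits clusters in order $A,B,A,B$), so ``one detour per cluster'' does not assemble into a connected path from $x$ to $y$.

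The missing idea, which is how the paper proceeds, is this: replace \emph{each} atomic sub-$\delta$ arc $\eta_i\in\Gamma(x_i,y_i)$ separately by a polygonal chain $\overline\eta_i$ through covering centers---this is legitimate because $x_i$ and $y_i$ lie in the same connected component of $L_{\mu,\delta}$, so overlapping balls do chain them---concatenate with $[\gamma]_\delta$ to obtain $\overline\gamma\in\Gamma(x,y)$, and then \emph{delete all loops occurring at the covering centers}. After loop removal each of the $\le 2/m$ centers is visited at most once, so $\tilde\eta$ consists of at most $2|\cF|$ segments of length $\le 4\delta$, giving $|\tilde\eta|\le 16\delta/m$ independently of the (possibly infinite) number of arcs. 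The loop removal also excises some super-$\delta$ material, which is precisely why the statement only asks for $\tilde\omega\subset[\gamma]_\delta$ rather than equality.
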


\begin{figure}
    \centering
    \begin{minipage}{0.45\textwidth}
        \centering
        \includegraphics[width=\linewidth]{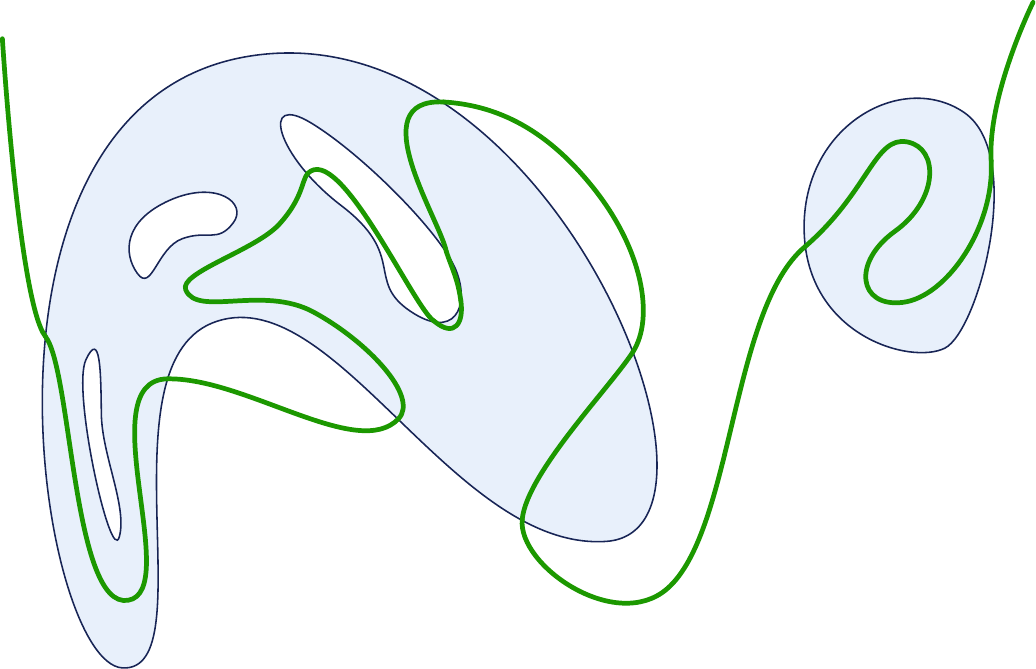}
        \subcaption*{Original path}
    \end{minipage}
    \begin{minipage}{0.45\textwidth}
        \centering
        \includegraphics[width=\linewidth]{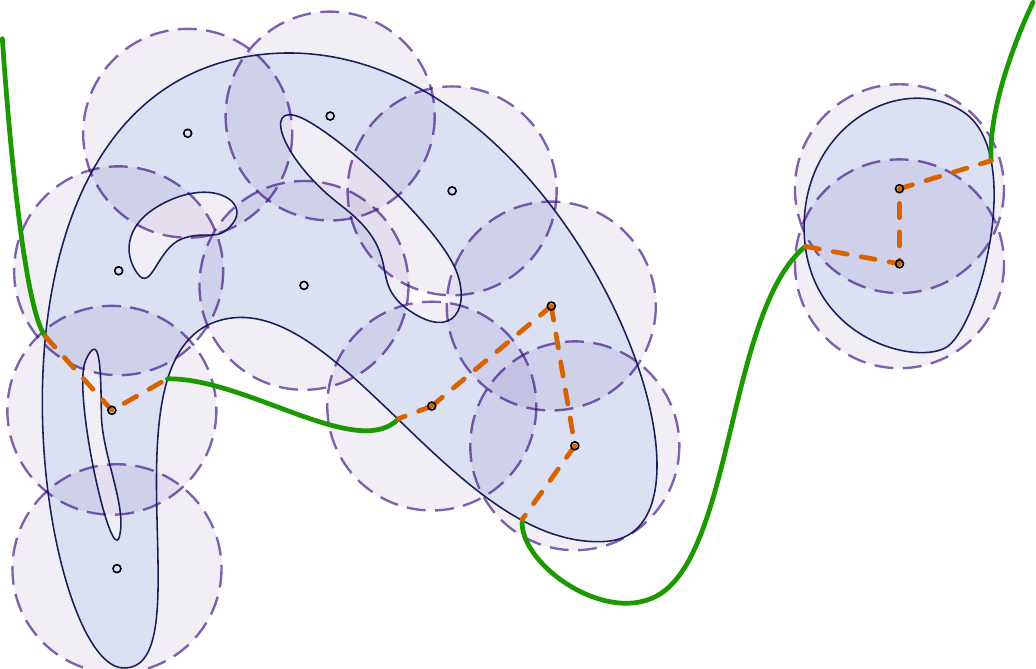}
        \subcaption*{Modified path}
    \end{minipage}
    \caption{Example of the modification process of \cref{lemma:geodesic:gamma_modif_no_domain}.
    The blue area represents a sublevel and the dashed circles on the right figure represent a covering of this sublevel.
    The sublevel sections of the original path are replaced with new sections making up $\tilde\eta$ (in orange) on the right figure.
    Some of the superlevel sections of the original path are removed due to creating loops and the remaining parts make up $\tilde\omega$ (in green).}
    \label{fig:path_decomposition_no_domain}
\end{figure}

\begin{proof}
Denote $\eta_1\in\Gamma(x_1,y_1)$, \dots, $\eta_i\in\Gamma(x_i,y_i)$, \dots the connected paths making up $[\gamma]^\delta$.
Consider a minimal $4\delta$-covering $\cF$ of the sublevel $L_{\mu,\delta}$. Then according to \cref{lemma:dtm_covering}, $|\cF| \le \frac2m$.
For a given $i\ge0$, $x_i$ and $y_i$ belong to the same connected component of $L_{\mu,\delta}$ since they are connected by~$\eta_i$, and therefore can be joined by ``hopping'' between points in $\cF$: There exist pairwise distinct points $z_{i,1},\dots,z_{i,k_i}\in\cF$ such that $\|x_i - z_{i,1}\| \le 4\delta$, $\|y_i - z_{i,k_i}\| \le 4\delta$ and for all $1\le k\le k_i-1$, $\|z_{i,k} - z_{i,k+1}\| \le 8\delta$.
We then define for all $i$, $\overline\eta_i = [x_i, z_{i,1}, z_{i,2}, \dots, z_{i,k_i-1}, z_{i,k_i}, y_i] \in \Gamma_\F(x_i,y_i)$ the corresponding polygonal path from $x_i$ to $y_i$, along with
\begin{align*}
    \overline\gamma
    = [\gamma]_\delta + \sum_i\overline\eta_i
    = \gamma + \sum_i\Lp\overline\eta_i-\eta_i\Rp
    \in \Gamma(x,y)
\end{align*}
a new path replacing subsections of $\gamma$ with these new paths.
Finally, we define $\tilde\gamma$ as the result of deleting all loops from $\overline\gamma$ occurring in $\cF$, so that each point $z\in\cF$ is visited at most once by $\tilde\gamma$.
Moreover, we write $\tilde\gamma = \tilde\eta + \tilde\omega$ where $\tilde\eta \subset \sum_i\overline\eta_i$ and $\tilde\omega \subset [\gamma]_\delta$ are the remaining parts after the removal of the loops. This concludes the construction of the modified path $\tilde\gamma \in \Gamma(x,y)$.

It remains to upper bound $D_\mu(\tilde\eta)$. Breaking down the straight lines $[z_{i,k},z_{i,k+1}]$ into two halves in the definition of the paths $\overline\eta_i$ shows that $\tilde\eta$ is the sum of at most $2|\cF|$ paths of the form $[w,z]$ or $[z,w]$ where $z\in\cF \subset L_{\mu,\delta}$ and $w \in \ball(z,4\delta)$ is either an endpoint $x_i$ or $y_i$, or a point halfway to another $z'\in\cF$.
It follows immediately that $|\tilde\eta| \le 8|\cF|\delta$. Moreover, $\tilde\eta$ remains at any point at distance at most $4\delta$ from $\cF \subset L_{\mu,\delta}$, hence $\tilde\eta \subset L_{\mu,5\delta}$ by Lipschitz property of $d_\mu$. Thus,
\begin{align*}
    D_\mu(\tilde\eta)
    \le |\tilde\eta| \cdot (5\delta)^\beta
    \le \frac{16\cdot5^\beta}{m} \delta^{\beta+1}~,
\end{align*}
which concludes the proof.
\end{proof}

We are now able to bound the length of any geodesic.

\begin{proof}[Proof of \cref{thm:intro:euclidean_bound}]
Let $\gamma \in \Gamma_\mu^\star(x,y)$ be a geodesic and $\delta_0>0$.
For all $k\ge0$ let $\delta_k = \rho^{-k} \delta_0$ with $\rho = 2^{1/\beta}$.
Consider $\tilde\gamma_k = \tilde\eta_k + \tilde\omega_k$ the modified path associated with the sub-$\delta_k$ level given by \cref{lemma:geodesic:gamma_modif_no_domain}.
Then
\begin{align*}
    \delta_{k+1}^\beta \bv[\gamma]^{\delta_k}_{\delta_{k+1}}\bv
    &\le D_\mu\bLp[\gamma]^{\delta_k}_{\delta_{k+1}}\bRp
        && \textrm{($d_\mu^\beta \ge \delta_{k+1}^\beta$ over $[\gamma]^{\delta_k}_{\delta_{k+1}}$)}\\
    &\le D_\mu(\gamma) - D_\mu\bLp[\gamma]_{\delta_k}\bRp\\
    &\le D_\mu(\tilde\gamma_k) - D_\mu(\tilde\omega_k)
        && \textrm{($\gamma$ is optimal and $\tilde\omega_k\subset[\gamma]_{\delta_k}$)}\\
    &= D_\mu(\tilde\eta_k)\\
    &\le c \delta_k^{\beta+1}~.
        && \textrm{(\crefnopar{eq:modification1_eta})}
\end{align*}
Therefore,
\begin{align*}
    \bv[\gamma]^{\delta_0}\bv
    = \sum_{k=0}^\pinfty \bv[\gamma]^{\delta_k}_{\delta_{k+1}}\bv
    \le c \sum_{k=0}^\pinfty \frac{\delta_k^{\beta+1}}{\delta_{k+1}^\beta}
    % = c \sum_{k=0}^\pinfty \rho^{-(\beta+1)k+\beta(k+1)} \delta_0
    = c\rho^\beta \sum_{k=0}^\pinfty \rho^{-k} \delta_0
    = c \frac{2^{1+1/\beta}}{2^{1/\beta}-1} \delta_0
    \le \frac{4c\beta}{\log(2)} \delta_0~.
\end{align*}
The inequality $(2^{1/\beta}-1)^{-1} \le \beta/\log(2)$ was used at the last step.
In particular, choosing $\delta_0 = \|d_\mu\|_{\infty,\gamma}$ provides an upper bound on the length of the entire geodesic and concludes the proof.
\end{proof}

\subsection{Details with Domain Constraints}
\label{app:geodesic:details_domain}

We now provide the details omitted in \cref{sec:geodesic_length_general} in the general case.
First, we state an intermediate result to upper bound the length of any segment within a sublevel of the \ac{dtm}.

\begin{lemma}
\label{lemma:segment_in_covering}
Let $A$ be a measurable subset of $\RR^d$ and $x,y \in \RR^d$.
Then for all $r>0$,
\begin{align*}
    \bv[x,y] \cap A\bv
    \le \cov(A,r) \cdot 2r~.
\end{align*}
\end{lemma}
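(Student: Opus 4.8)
The plan is to cover the set $A$ by $\cov(A,r)$ balls of radius $r$ and observe that the part of the segment $[x,y]$ lying in $A$ is contained in the union of these balls; it then suffices to control the length of the intersection of a line with a single ball of radius $r$. First I would fix a minimal $r$-covering $\cF$ of $A$, so $|\cF| = \cov(A,r)$ and $A \subset \bigcup_{z\in\cF} \cB(z,r)$. Intersecting with the segment gives
\begin{align*}
    [x,y]\cap A \subset \bigcup_{z\in\cF} \bLp [x,y]\cap\cB(z,r)\bRp,
\end{align*}
hence by subadditivity of length (here the one-dimensional Lebesgue/Hausdorff measure along the line supporting $[x,y]$),
\begin{align*}
    \bv[x,y]\cap A\bv \le \sum_{z\in\cF} \bv[x,y]\cap\cB(z,r)\bv.
\end{align*}

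The key geometric fact is that the intersection of any affine line with a ball of radius $r$ is either empty or a chord, whose length is at most the diameter $2r$. So each term in the sum is bounded by $2r$, and summing over the $\cov(A,r)$ elements of $\cF$ yields $\bv[x,y]\cap A\bv \le \cov(A,r)\cdot 2r$, as claimed. One small technical point worth a line: $[x,y]\cap A$ is measurable since $A$ is, so its length is well-defined; and one should note that $[x,y]\cap\cB(z,r)$ is a (possibly degenerate) sub-segment of $[x,y]$, so its Euclidean length equals the length of the chord it spans, which is what the diameter bound controls.

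I do not expect any real obstacle here — the statement is elementary. The only mild care needed is bookkeeping: making sure ``length'' is interpreted consistently as the $1$-dimensional measure of the subset of the line through $x$ and $y$ (the segment is parametrized with constant velocity, so this matches $\int \|\dot\gamma\|$ restricted to the relevant parameter set), and invoking subadditivity over the finite cover rather than trying to partition $A$ disjointly. The chord-length-at-most-diameter bound is the crux and is immediate from the triangle inequality: any two points of $\cB(z,r)$ are within $2r$ of each other.
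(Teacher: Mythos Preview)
Your proposal is correct and follows essentially the same approach as the paper: take a minimal $r$-covering $\cF$ of $A$, use $[x,y]\cap A \subset \bigcup_{z\in\cF}[x,y]\cap\cB(z,r)$, and bound each intersection by $2r$. The paper phrases the last step via a unit-speed parametrization (bounding $\sup-\inf$ of the parameter set hitting each ball), whereas you invoke the chord-in-a-ball argument directly, but this is the same geometric observation.
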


\begin{proof}
Denote $l = \|x-y\|$ and $\gamma:[0,l] \to \RR^d$ the arc length parameterization of $[x,y]$.
Let $\cF$ be a minimal $r$-covering of $A$. Then
\begin{align*}
    \bv[x,y] \cap A\bv
    = \int_0^l \1_A(\gamma(t)) \d t
    \le \int_0^l \1_{\bLp\bigcup_{a\in\cF}\cB(a,r)\bRp}(\gamma(t)) \d t
    \le \sum_{a\in\cF} \int_0^l \1_{\cB(a,r)}(\gamma(t)) \d t~.
\end{align*}
If $a\in\cF$ and $\gamma(t),\gamma(s) \in \cB(a,r)$, then $t-s = \|\gamma(t)-\gamma(s)\| \le 2r$. Therefore,
\begin{align*}
    \bv[x,y] \cap A\bv
    \le \sum_{a\in\cF} \bLp\sup\{t:\gamma(t)\in\cB(a,r)\} - \inf\{t:\gamma(t)\in\cB(a,r)\}\bRp
    \le \cov(A,r) \cdot 2r~.
\end{align*}
\end{proof}

Together, \cref{lemma:segment_in_covering,lemma:dtm_covering} yield the following result.

\begin{corollary}
\label{corol:segment_in_sublevel}
Let $\delta>0$ and $x,y \in \RR^d$.
Then
\begin{align*}
    \bv[x,y] \cap L_{\mu,\delta}\bv
    \le \frac{16}{m} \delta~.
\end{align*}
\end{corollary}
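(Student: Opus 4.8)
The plan is to simply combine the two lemmas that immediately precede the statement, so there is essentially no obstacle here — this corollary is a one-line consequence of \cref{lemma:segment_in_covering,lemma:dtm_covering}.

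First I would invoke \cref{lemma:segment_in_covering} with the measurable set $A = L_{\mu,\delta}$ and the scale $r = 4\delta$, which gives
\begin{align*}
    \bv[x,y] \cap L_{\mu,\delta}\bv
    \le \cov\bLp L_{\mu,\delta}, 4\delta \bRp \cdot 8\delta.
\end{align*}
Then I would plug in the covering bound from \cref{lemma:dtm_covering}, namely $\cov\bLp L_{\mu,\delta}, 4\delta \bRp \le \frac{2}{m}$, to conclude
\begin{align*}
    \bv[x,y] \cap L_{\mu,\delta}\bv
    \le \frac{2}{m} \cdot 8\delta
    = \frac{16}{m}\delta.
\end{align*}

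The only thing worth double-checking is that the hypotheses of \cref{lemma:segment_in_covering} are met — but that lemma holds for an arbitrary measurable subset and arbitrary $r > 0$, and $L_{\mu,\delta} = \{x : d_\mu(x) < \delta\}$ is open (hence measurable) by continuity of $d_\mu$, so there is nothing to verify. No regularity of $\mu$ beyond it being a probability measure is needed, since \cref{lemma:dtm_covering} already has that full generality.
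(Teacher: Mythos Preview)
Your proof is correct and matches the paper's approach exactly: the paper simply states that the corollary follows from \cref{lemma:segment_in_covering,lemma:dtm_covering}, and your instantiation with $A = L_{\mu,\delta}$ and $r = 4\delta$ is precisely the intended combination.
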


Recall \cref{lemma:geodesic:gamma_modif_main} in the general case, here with precise constants.

\begin{lemma}
\label{lemma:geodesic:gamma_modif_domain}
Let $\gamma\in\Gamma_\F(x,y)$ and $\delta>0$.
Then there exists a decomposition of the path $\gamma = \eta + \chi + \omega$ along with a modified path $\tilde\gamma = \tilde\eta + \tilde\chi + \tilde\omega \in \Gamma_\F(x,y)$ such that
\begin{enumerate}[(i)]
    \item $\eta \subset [\gamma]^{\rho\delta}$ contains the inner sections of $[\gamma]^\delta$ and the outer sections of $\gamma$ that intersect $L_{\mu,\delta}$ while not being too long outside this sublevel, with $\rho = 1 + \frac{32}{m}$.
    $\tilde\eta$ is a modification of $\eta$ satisfying
    \begin{align}
        \label{eq:modification2_eta}
        D_\mu(\tilde\eta)
        \le c \delta^{\beta+1}
    \end{align}
    where $c = \frac{16\cdot5^\beta+1152\cdot165^\beta}{m^{2+\beta}}$.
    \item $\chi$ contains the remaining outer sections of $\gamma$ intersecting $L_{\mu,\delta}$ and satisfies
    \begin{align}
        \label{eq:modification2_chi}
        \bv[\chi]^{\delta}\bv
        \le \frac12 \bv[\gamma]_\delta^{\rho\delta}\bv~.
    \end{align}
    $\tilde\chi \subset \chi$ is a subset of these outer sections.
    \item $\omega$ contains the rest of $\gamma$ and $\tilde\omega \subset \omega \subset [\gamma]_\delta$ is a subset of this remaining part.
\end{enumerate}
\end{lemma}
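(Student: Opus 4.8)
The plan is to build the decomposition $\gamma = \eta + \chi + \omega$ by carefully classifying the pieces of $\gamma$ according to whether they are inner (inside $\F$) or outer (outside $\F$), and whether they meet the low-\ac{dtm} region $L_{\mu,\delta}$, then construct the replacement $\tilde\gamma$ by a covering argument analogous to the proof of \cref{lemma:geodesic:gamma_modif_no_domain}. Concretely, I would first isolate the outer sections of $\gamma$: each is a straight segment $[a,b]$ with $a,b\in\F$. \Cref{corol:segment_in_sublevel} bounds the length $\bv[a,b]\cap L_{\mu,\delta}\bv \le \tfrac{16}{m}\delta$ that any such segment spends in the sublevel set, and by the $1$-Lipschitz property of $d_\mu$ (\cref{eq:dtm_lip}) the portion of the segment within distance $\tfrac{16}{m}\delta$ of $L_{\mu,\delta}$ lies inside $L_{\mu,\rho\delta}$ with $\rho = 1 + \tfrac{32}{m}$. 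This is exactly the source of the factor $\rho$: an outer segment that touches $L_{\mu,\delta}$ cannot immediately climb above level $\rho\delta$ on either side of its intersection with $L_{\mu,\delta}$.

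Next I would assign pieces to the three groups. Into $\eta$ I would put: (a) the inner sections of $[\gamma]^\delta$ (the parts of $\gamma$ inside $\F$ and inside $L_{\mu,\delta}$), and (b) for each outer segment meeting $L_{\mu,\delta}$, the ``central'' portion consisting of its intersection with $L_{\mu,\delta}$ together with the two flanking stretches of length $\le \tfrac{16}{m}\delta$ on each side — all of which lies in $L_{\mu,\rho\delta}$, giving $\eta \subset [\gamma]^{\rho\delta}$. Into $\chi$ I would put the remaining ``tails'' of those outer segments, i.e. the parts that have already left $L_{\mu,\rho\delta}$; these are super-$\delta$ and moreover super-$\rho\delta$ at their far ends. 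Into $\omega$ goes everything else, which is contained in $[\gamma]_\delta$. For the modification: $\tilde\eta$ is constructed exactly as in \cref{lemma:geodesic:gamma_modif_no_domain} — take a minimal $4\delta$-covering $\cF$ of $L_{\mu,\rho\delta}$, whose cardinality is bounded via \cref{lemma:dtm_covering} (applied at level $\rho\delta$, so $|\cF|\le\tfrac{2}{m}$ if we cover $L_{\mu,\rho\delta}$ with $4\rho\delta$-balls, or we adjust the covering radius), connect consecutive endpoints through chains of covering centres, and delete loops. The bookkeeping then produces $|\tilde\eta|\le c_1\delta$ and $D_\mu(\tilde\eta)\le c_2\delta^{\beta+1}$ with the stated (admittedly large) constants coming from the product of the number of outer segments one must handle, the covering size $\tfrac{2}{m}$, the radius inflation $\rho\sim\tfrac{32}{m}$, and the level bound $\rho\delta$ raised to the $\beta$. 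We set $\tilde\chi\subset\chi$ and $\tilde\omega\subset\omega$ to be whatever survives loop deletion; since no pieces are added to $\chi$ or $\omega$, the inclusions $\tilde\chi\subset\chi$, $\tilde\omega\subset\omega\subset[\gamma]_\delta$ are automatic, and admissibility $\tilde\gamma\in\Gamma_\F(x,y)$ holds because each replacement chain is polygonal through covering centres inside $L_{\mu,\rho\delta}$, hence inside $\K$, and the untouched outer tails remain straight.

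The remaining and most delicate point is \cref{eq:modification2_chi}, the claim $\bv[\chi]^\delta\bv \le \tfrac12\bv[\gamma]_\delta^{\rho\delta}\bv$. The set $[\chi]^\delta$ is the part of the outer tails lying in $L_{\mu,\rho\delta}\setminus L_{\mu,\delta}$ — wait, by construction $\chi$ is already super-$\delta$, so $[\chi]^\delta$ means the part of $\chi$ in $L_{\mu,\rho\delta}$ (between levels $\delta$ and $\rho\delta$). I would bound this by noting that an outer tail in this annular band is a straight segment, so \cref{lemma:segment_in_covering} applied to $L_{\mu,\rho\delta}$ bounds its length there by $\cov(L_{\mu,\rho\delta},4\rho\delta)\cdot 8\rho\delta \le \tfrac{16\rho}{m}\delta$ per segment; the hard part is relating this to $\bv[\gamma]_\delta^{\rho\delta}\bv$, the total length $\gamma$ itself spends between levels $\delta$ and $\rho\delta$. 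The key observation should be that a super-$\delta$ outer tail contributing to $[\chi]^\delta$ must be preceded (along $\gamma$, outside the sublevel) by a genuine excursion of $\gamma$ through the band $L_{\mu,\rho\delta}\setminus L_{\mu,\delta}$ of length at least comparable to the band width $(\rho-1)\delta = \tfrac{32}{m}\delta$ — since $\gamma$ re-enters $L_{\mu,\delta}$ at the start of the next outer segment's central part and $d_\mu$ is $1$-Lipschitz — so the ``charge'' from each tail can be paid for by a disjoint sub-section of $[\gamma]_\delta^{\rho\delta}$, with the factor $\tfrac12$ absorbed in the choice $\rho = 1 + \tfrac{32}{m}$ making the band wide enough. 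I expect this charging/disjointness argument — carefully pairing each $[\chi]^\delta$ piece with a distinct stretch of $[\gamma]_\delta^{\rho\delta}$ and verifying the constants close — to be the main obstacle; the rest is the same covering machinery as the domain-free case, just with more pieces to track.
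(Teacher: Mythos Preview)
Your decomposition does not match the one the lemma asks for, and more importantly it cannot produce an admissible $\tilde\gamma$. The statement says $\eta$ contains \emph{the outer sections of $\gamma$} that intersect $L_{\mu,\delta}$ while not being too long outside it --- whole outer segments, classified by a length criterion --- whereas you slice each outer segment $[a,b]$ into a ``central'' piece (put in $\eta$) and two ``tails'' (put in $\chi$). The cut points where central meets tail are interior points of an outer section, hence lie in $\RR^d\setminus\F$. When you then replace the central piece by a polygonal chain through covering centres and glue it back to the untouched tails, the resulting $\tilde\gamma$ changes direction at those cut points, which are outside $\F$; this violates the defining constraint of $\Gamma_\F(x,y)$ that the path may leave $\F$ only along straight lines. (Your covering $\cF$ of $L_{\mu,\rho\delta}$ also need not have its centres in $\F$, compounding the problem.) The paper avoids this by never breaking an outer segment: it sets $\theta=\tfrac{32}{m}\delta$, puts the outer segments $[a_j,b_j]$ with $\bigl|[a_j,b_j]_\delta\bigr|\le\theta$ \emph{entirely} into $\eta$, and puts those with $\bigl|[a_j,b_j]_\delta\bigr|>\theta$ \emph{entirely} into $\chi$. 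The replacement uses two coverings whose centres lie in $\F$: one of $L_{\mu,\delta}\cap\F$ (for the inner $\eta$-pieces) and one of the endpoint set $\{a_j,b_j\}\subset\F$ (for the short outer $\eta$-segments), so every vertex of the replacement polygon is in $\F$ and admissibility is preserved.

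This also explains your difficulty with \cref{eq:modification2_chi}. In your decomposition $\chi$ is already super-$\delta$, so $[\chi]^\delta$ is empty and the inequality is vacuous --- a sign that the decomposition is not the intended one. In the paper's decomposition each $\chi$-segment $\overline\chi_k$ does meet $L_{\mu,\delta}$, and the argument is direct: $\bigl|[\overline\chi_k]^\delta\bigr|\le\tfrac{16}{m}\delta=\tfrac12\theta$ by \cref{corol:segment_in_sublevel}, while $\bigl|[\overline\chi_k]_\delta^{\rho\delta}\bigr|\ge\theta$ because either all of $[\overline\chi_k]_\delta$ lies below level $\delta+\theta=\rho\delta$ (and has length $>\theta$ by the classification rule), or the segment must traverse the Lipschitz band between levels $\delta$ and $\rho\delta$, which has width $\theta$. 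Summing over $k$ gives the inequality with no charging argument needed.
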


\begin{figure}
    \centering
    \begin{minipage}{0.45\textwidth}
        \centering
        \includegraphics[width=\linewidth]{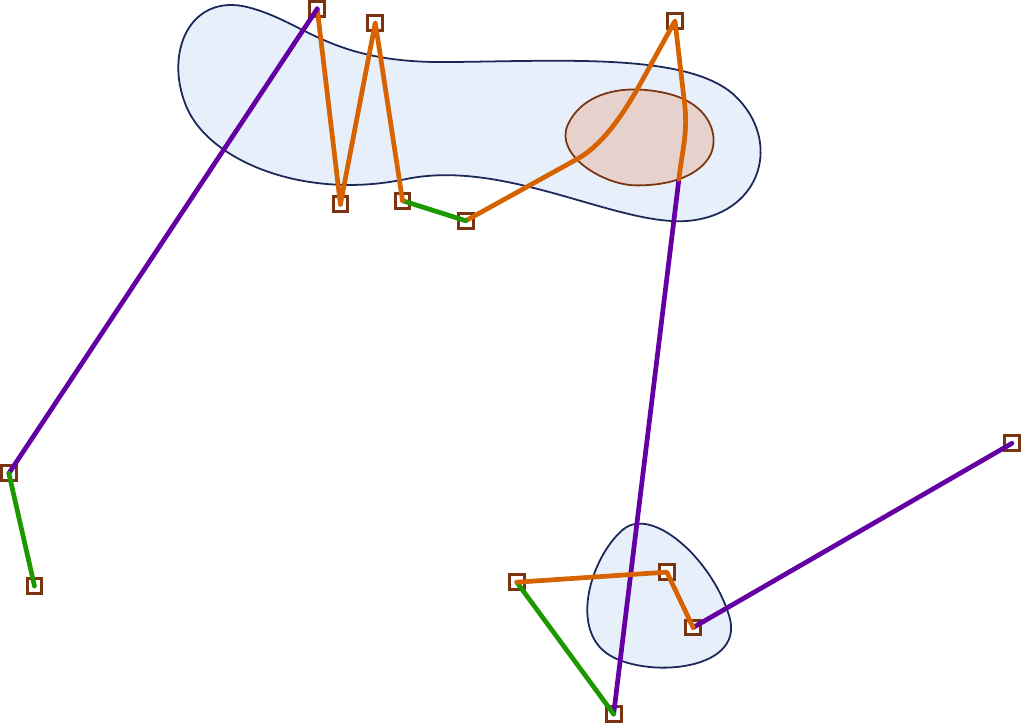}
        \subcaption*{Original path}
    \end{minipage}
    \begin{minipage}{0.45\textwidth}
        \centering
        \includegraphics[width=\linewidth]{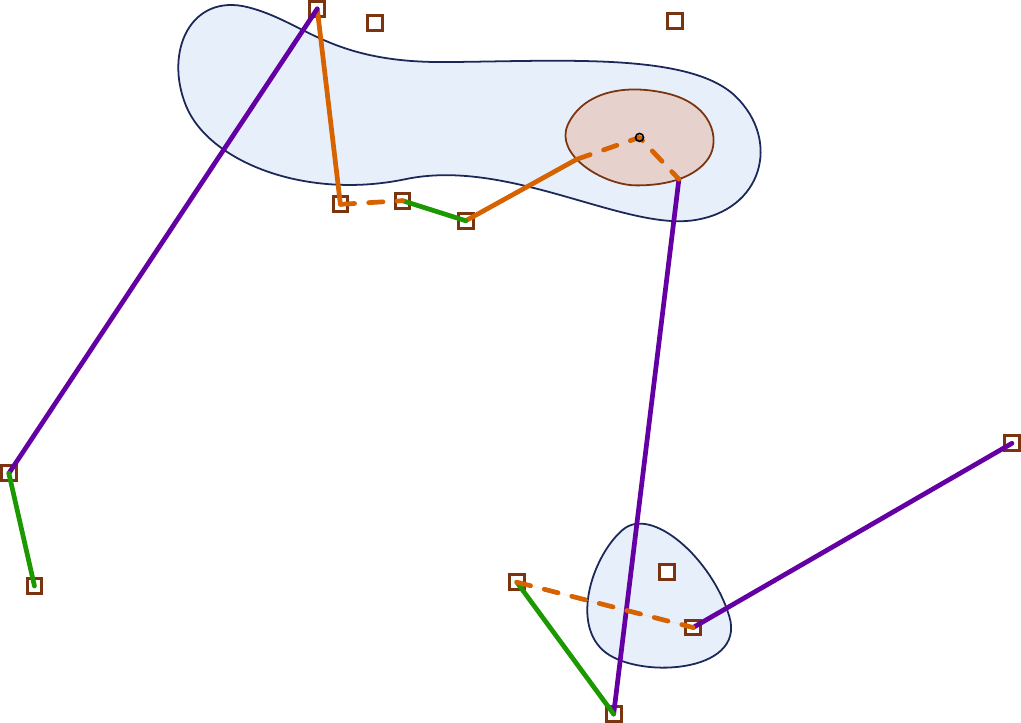}
        \subcaption*{Modified path}
    \end{minipage}
    \caption{Example of the modification process of \cref{lemma:geodesic:gamma_modif_domain}.
    The blue area represents a sublevel, the brown area and squares represent the domain.
    The $\eta$ sections of the original path (left figure, orange) are partially replaced (right figures, orange dashed lines) to make up $\tilde\eta$ (right figure, orange).
    The $\chi$ and $\omega$ sections (purple and green respectively) are left untouched in this example as no loop is removed.}
    \label{fig:path_decomposition_with_domain}
\end{figure}

When dealing with the domain constraints to modify the sublevel of a path, the main issue is that if a long outer section crosses the sublevel, then it is not possible to modify only the sublevel part of this outer section. Indeed, doing so would break the segment at a point that is outside the domain $\F$ and the new path would no longer belong to $\Gamma_\F(x,y)$.
To account for this issue, we choose to modify the ``problematic'' outer sections when their length outside the sublevel is small enough similarly as we do with inner sections of the sublevel. All these sections make up $\eta$.
The remaining outer sections make up $\chi$ and have the property that their length inside the sublevel is smaller than a fraction of their length outside, which allows to bound the overall length by other means.

\Cref{fig:path_decomposition_with_domain} illustrates the process described in \cref{lemma:geodesic:gamma_modif_domain}.
Notice that a loop subsists in the modified path, which cannot be removed as it occurs at a point outside the domain. This example illustrates how it not always possible to modify the path with limited length within the sublevel proportional to its covering number, as $\chi$ sections may not be reliably modified due to their arbitrarily large length, even when they create loops.

\begin{proof}
Denote $\eta_1\in\Gamma_\F(x_1,y_1)$, \dots, $\eta_i\in\Gamma_\F(x_i,y_i)$, \dots the connected paths making up $[\gamma]^\delta \cap \F$ and $[a_1,b_1]$, \dots, $[a_j,b_j]$, \dots the outer sections of $\gamma$ that intersect with the sublevel $L_{\mu,\delta}$ and such that any part of the segment outside $L_{\mu,\delta}$ is of length at most $\theta = \frac{32}{m} \delta$, which in particular implies that all $a_j$ and $b_j$ belong to $L_{\mu,\delta+\theta}$ by Lipschitz property of $d_\mu$.
Denote $\eta = \sum_i \eta_i + \sum_j [a_j,b_j]$, $\chi$ the remaining outer sections of $\gamma$ intersecting with $L_{\mu,\delta}$ and finally $\omega = \gamma - \eta - \chi \subset [\gamma]_\delta$ the rest of the path.

Consider a minimal $4\delta$-covering $\cF$ of $L_{\mu,\delta}\cap \F$.
According to \cref{lemma:dtm_covering}, $|\cF| \le \frac2m$.
For a given $i\ge0$, $x_i$ and $y_i$ belong to the same connected component of $L_{\mu,\delta}\cap \F$ since they are connected by $\eta_i$.
Therefore, there exist pairwise distinct points $z_{i,1},\dots,z_{i,k_i}\in\cF$ such that $\|x_i - z_{i,1}\| \le 4\delta$, $\|y_i - z_{i,k_i}\| \le 4\delta$ and for all $1\le k\le k_i-1$, $\|z_{i,k} - z_{i,k+1}\| \le 8\delta$.
We then define for all $i$, $\overline\eta^1_i = [x_i, z_{i,1}, z_{i,2}, \dots, z_{i,k_i-1}, z_{i,k_i}, y_i] \in \Gamma_\F(x_i,y_i)$ the corresponding polygonal path from $x_i$ to $y_i$.

Consider a minimal $4(\delta+\theta)$-covering $\cG$ of $\{a_1,b_1,\dots,a_j,b_j,\dots\} \subset L_{\mu,\delta+\theta}$.
According to \cref{lemma:dtm_covering}, $|\cG| \le \frac2m$.
For all $j\ge0$, define $\overline\eta^2_j = [a_j,z'_{j,1},z'_{j,2},b_j]$ where $z'_{j,1}$ and $z'_{j,2}$ are respectively the closest neighbors of $a_j$ and $b_j$ in $\cG$, and finally let
\begin{align*}
    \overline\gamma
    = \sum_i\overline\eta^1_i + \sum_j\overline\eta^2_j + \chi + \omega
    = \gamma + \sum_i\Lp\overline\eta^1_i-\eta_i\Rp + \sum_j\Lp\overline\eta^2_j-[a_j,b_j]\Rp~.
\end{align*}
Since $\overline\gamma$ is the result of replacing some sections of $\gamma$ with straight lines between points in~$\F$, $\overline\gamma \in \Gamma_\F(x,y)$.
Finally, we define $\tilde\gamma$ as the result of deleting all loops from $\overline\gamma$ occurring at points in $\cF\cup\cG \subset \F$, so that $\tilde\gamma \in \Gamma_\F(x,y)$ and each point $z\in\cF\cup\cG$ is visited at most once by $\tilde\gamma$.
Moreover, we write $\tilde\gamma = \tilde\eta + \tilde\chi + \tilde\omega$ where $\tilde\eta = \tilde\eta^1 + \tilde\eta^2$ and $\tilde\eta^1 \subset \sum_i\overline\eta^1_i$, $\tilde\eta^2 \subset \sum_j\overline\eta^2_j$, $\tilde\chi \subset \chi$ and $\tilde\omega \subset \omega \subset [\gamma]_\delta$ are the remaining parts after the removal of the loops.
This concludes the construction of the modified path $\tilde\gamma$.

It remains to establish \cref{eq:modification2_eta,eq:modification2_chi}.
With the exact same reasoning as for \cref{eq:modification1_eta}, $|\tilde\eta^1| \le 8|\cF|\delta$, $\tilde\eta^1 \subset L_{\mu,5\delta}$ and thus
\begin{align}
    \label{eq:proof:modification_general1}
    D_\mu(\tilde\eta^1)
    \le |\tilde\eta^1| \cdot (5\delta)^\beta
    \le \frac{16\cdot5^\beta}{m} \delta^{\beta+1}~.
\end{align}
On the other hand, $\tilde\eta^2$ is the sum of at most $|\cG|$ paths $\overline\eta^2_j$ (either whole or just part of them).
For all $j$, $[a_j,b_j]$ has length at most $\theta$ outside $L_{\mu,\delta}$ by assumption and at most $\frac{16}{m} \delta$ inside $L_{\mu,\delta}$ due to \cref{corol:segment_in_sublevel}. Moreover, by definition of $\cG$, $\|a_j-z'_{j,1}\| \le 4(\delta+\theta)$, $\|b_j-z'_{j,2}\| \le 4(\delta+\theta)$ and
\begin{align*}
    \|z'_{j,1}-z'_{j,2}\|
    \le \|z'_{j,1}-a_j\| + \|a_j-b_j\| + \|b_j-z'_{j,2}\|
    \le 8(\delta+\theta) + \theta + \frac{16}{m} \delta~,
\end{align*}
so that
\begin{align*}
    |\overline\eta^2_j|
    = \|a_j-z'_{j,1}\| + \|z'_{j,1}-z'_{j,2}\| + \|z'_{j,2}-b_j\|
    \le 16(\delta+\theta) + \theta + \frac{16}{m} \delta
    \le \frac{32}{m}\delta + 17\theta
    = 18\theta
\end{align*}
and finally
\begin{align*}
    |\tilde\eta^2|
    \le |\cG| \cdot 18\theta
    \le \frac{1152}{m^2} \delta~.
\end{align*}
Then, for all $j$, any point of $\overline\eta^2_j$ is at distance at most $4(\delta+\theta)$ from $[a_j,b_j] \subset L_{\mu,\delta+\theta}$, hence $\tilde\eta^2 \subset L_{\mu,5(\delta+\theta)}$ and
\begin{align}
    \label{eq:proof:modification_general2}
    D_\mu(\tilde\eta^2)
    \le |\tilde\eta^2| \cdot (5(\delta+\theta))^\beta
    \le \frac{1152}{m^2} \Lp\frac{160}{m}+5\Rp^\beta \delta^{\beta+1}
    \le \frac{1152\cdot165^\beta}{m^{2+\beta}}\delta^{\beta+1}~.
\end{align}
Put together, \cref{eq:proof:modification_general1,eq:proof:modification_general2} yield
\begin{align*}
    D_\mu(\tilde\eta)
    = D_\mu(\tilde\eta^1) + D_\mu(\tilde\eta^2)
    \le \frac{16\cdot5^\beta+1152\cdot165^\beta}{m^{2+\beta}} \delta^{\beta+1}~.
\end{align*}
Now, denote $(\overline\chi_k)_k$ the different segments making up $\chi$. For each segment $\overline\chi_k$, according to \cref{corol:segment_in_sublevel},
\begin{align}
    \label{eq:proof:modification_general5}
    \bv[\overline\chi_k]^\delta\bv
    \le \frac{16}{m} \delta
    = \frac12 \theta
    \le \frac12 \bv[\overline\chi_k]^{\delta+\theta}_\delta\bv~.
\end{align}
The last inequality is due to the following:
If $[\overline\chi_k]_\delta \subset L_{\mu,\delta+\theta}$ then it follows immediately from the assumption that $\bv[\overline\chi_k]_\delta\bv > \theta$.
Else, $\overline\chi_k$ intersects both $L_{\mu,\delta}$ and $\RR^d\setminus L_{\mu,\delta+\theta}$, which are separated by a distance of at least $\theta$ by Lipschitz property of $d_\mu$, so that $\bv[\overline\chi_k]^{\delta+\theta}_\delta\bv \ge \theta$.
Then, summing \cref{eq:proof:modification_general5} over all $\overline\chi_k$ yields
\begin{align*}
    \bv[\chi]^\delta\bv
    \le \frac12 \bv[\chi]^{\rho\delta}_\delta\bv
    \le \frac12 \bv[\gamma]^{\rho\delta}_\delta\bv
\end{align*}
where $\rho = \frac{\delta+\theta}{\delta} = 1+\frac{32}{m}$, which concludes the proof.
\end{proof}

We are now able to bound the length of any geodesic.

\begin{proof}[Proof of \cref{thm:intro:euclidean_bound}]
Let $\gamma \in \Gamma_{\mu,\F}^\star(x,y)$ be a geodesic and $\delta_0>0$.
For all $k\ge0$ let $\delta_k = \rho^{-k} \delta_0$ with $\rho = 1 + \frac{32}{m}$.
Consider $\gamma = \eta_k + \chi_k + \omega_k$ and $\tilde\gamma_k = \tilde\eta_k + \tilde\chi_k + \tilde\omega_k$ the decomposition and modified path associated with the sub-$\delta_k$ level given by \cref{lemma:geodesic:gamma_modif_domain}.
Recall that by construction $[\gamma]^{\delta_k} = \eta_k + [\chi_k]^{\delta_k}$, thus
\begin{align}
    \label{eq:proof:optimal_length_general2}
    \bv[\gamma]^{\delta_0}\bv
    = \sum_{k=0}^\pinfty \bv[\gamma]^{\delta_k}_{\delta_{k+1}}\bv
    = \sum_{k=0}^\pinfty \bv[\eta_k]^{\delta_k}_{\delta_{k+1}} + [\chi_k]^{\delta_k}_{\delta_{k+1}}\bv
    \le \sum_{k=0}^\pinfty \bv[\eta_k]_{\delta_{k+1}}\bv + \sum_{k=0}^\pinfty  \bv[\chi_k]^{\delta_k}\bv~.
\end{align}
On one hand, $\bv[\chi_k]^{\delta_k}\bv \le \frac12 \bv[\gamma]^{\delta_{k-1}}_{\delta_k}\bv$ for all $k\ge0$ by \cref{eq:modification2_chi}, hence
\begin{align}
    \label{eq:proof:optimal_length_general3}
    \sum_{k=0}^\pinfty  \bv[\chi_k]^{\delta_k}\bv
    \le \frac12 \sum_{k=0}^\pinfty  \bv[\gamma]^{\delta_{k-1}}_{\delta_k}\bv
    = \frac12 \bv[\gamma]^{\rho\delta_0}\bv
\end{align}
On the other hand, for all $k\ge0$,
\begin{align*}
    \delta_{k+1}^\beta \bv[\eta_k]_{\delta_{k+1}}\bv
    &\le D_\mu\bLp[\eta_k]_{\delta_{k+1}}\bRp
        && \textrm{($d_\mu^\beta \ge \delta_{k+1}^\beta$ over $[\eta_k]_{\delta_{k+1}}$)}\\
    &\le D_\mu(\eta_k)\\
    &= D_\mu(\gamma) - D_\mu(\chi_k+\omega_k)\\
    &\le D_\mu(\tilde\gamma_k) - D_\mu(\tilde\chi_k+\tilde\omega_k)
        && \textrm{($\gamma$ is optimal and $\tilde\chi_k+\tilde\omega_k \subset \chi_k+\omega_k$)}\\
    &= D_\mu(\tilde\eta_k)\\
    &\le c \delta_k^{\beta+1}~.
        && \textrm{(\crefnopar{eq:modification2_eta})}
\end{align*}
Therefore
\begin{align}
    \label{eq:proof:optimal_length_general4}
    \sum_{k=0}^\pinfty \bv[\eta_k]_{\delta_{k+1}}\bv
    \le c \sum_{k=0}^\pinfty \frac{\delta_k^{\beta+1}}{\delta_{k+1}^\beta}
    = c\rho^\beta \sum_{k=0}^\pinfty \rho^{-k} \delta_0
    = c \frac{\rho^{\beta+1}}{\rho-1} \delta_0~.
\end{align}
It follows from \crefrange{eq:proof:optimal_length_general2}{eq:proof:optimal_length_general4} that
\begin{align*}
    \bv[\gamma]^{\delta_0}\bv
    \le c \frac{\rho^{\beta+1}}{\rho-1} \delta_0
        + \frac12 \bv[\gamma]^{\rho\delta_0}\bv~
\end{align*}
and then
\begin{align*}
    \bv[\gamma]^{\delta_0}\bv
    \le 2c \frac{\rho^{\beta+1}}{\rho-1} \delta_0 + \bv[\gamma]^{\rho\delta_0}_{\delta_0}\bv~.
\end{align*}
In particular, choosing $\delta_0 = \|d_\mu\|_{\infty,\gamma}$ provides an upper bound on the length of the entire geodesic and concludes the proof with
\begin{align*}
    |\gamma|
    \le 2c \frac{\rho^{\beta+1}}{\rho-1} \|d_\mu\|_{\infty,\gamma}~.
\end{align*}
\end{proof}

\subsection{Local Bound}
\label{app:euclidean_bound_local}

Finally, we provide the proof of \cref{corol:euclidean_bound_local}.
The first step is to upper bound the \ac{dtm} over a geodesic using its values at the endpoints and the Euclidean distance between them.

\begin{lemma}
\label{lemma:geodesic_domain_delta}
Let $x,y\in\RR^d$ and $\gamma\in\Gamma_{\mu,\F}^\star(x,y)$ be a geodesic path.
Then
\begin{align*}
    \|d_\mu\|_{\infty,\gamma}
    \le \max\bLp d_\mu(x),d_\mu(y)\bRp + \|x-y\|~.
\end{align*}
\end{lemma}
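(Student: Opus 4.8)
The plan is to sandwich $D_\mu(\gamma)$ between a comparison bound and a ``climbing'' bound. Write $M=\|d_\mu\|_{\infty,\gamma}$, $a=\max(d_\mu(x),d_\mu(y))$ and $\ell=\|x-y\|$. Since $\gamma$ is continuous (being rectifiable) and $d_\mu$ is continuous, $M$ is attained at some point $z=\gamma(t^\star)$. If $M\le a+\tfrac12\ell$ the inequality is immediate, so I may assume $M>a+\tfrac12\ell$; in particular $M>a$, which forces $t^\star\in(0,1)$ since $d_\mu(x),d_\mu(y)\le a<M$.

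For the upper estimate I would use that the straight segment $[x,y]$ belongs to $\Gamma_\F(x,y)$, so that $D_\mu(\gamma)=D_{\mu,\F}(x,y)\le D_\mu([x,y])$. On $[x,y]$, the $1$-Lipschitz property \cref{eq:dtm_lip} gives, for $w_t=(1-t)x+ty$, both $d_\mu(w_t)\le d_\mu(x)+t\ell$ and $d_\mu(w_t)\le d_\mu(y)+(1-t)\ell$, hence $d_\mu(w_t)\le a+\min(t,1-t)\ell\le a+\tfrac12\ell$; integrating yields $D_\mu([x,y])\le \ell\,(a+\tfrac12\ell)^\beta$. Note that the cruder bound $d_\mu\le a+\ell$ on the segment would not be strong enough here — the factor $\tfrac12$ is essential.

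For the lower estimate, fix $c\in(a,M)$. On $[0,t^\star]$ the continuous function $t\mapsto d_\mu(\gamma(t))$ goes from $d_\mu(x)\le a<c$ up to $M>c$; letting $s_1=\sup\{t\le t^\star:\ d_\mu(\gamma(t))\le c\}$, a short continuity argument gives $s_1<t^\star$, $d_\mu(\gamma(s_1))=c$ and $d_\mu(\gamma(t))\ge c$ for all $t\in[s_1,t^\star]$. By $1$-Lipschitzness the Euclidean length of $\gamma|_{[s_1,t^\star]}$ is at least $\|z-\gamma(s_1)\|\ge M-c$. Running the mirror argument on $[t^\star,1]$ produces $s_2>t^\star$ with $d_\mu(\gamma(t))\ge c$ on $[t^\star,s_2]$ and Euclidean length at least $M-c$ there. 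Since $[s_1,t^\star]$ and $[t^\star,s_2]$ meet only at $t^\star$, the portion of $\gamma$ on which $d_\mu\ge c$ has Euclidean length at least $2(M-c)$, so $D_\mu(\gamma)\ge 2c^\beta(M-c)$.

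Putting the two bounds together, $2c^\beta(M-c)\le \ell\,(a+\tfrac12\ell)^\beta$ for every $c\in(a,M)$. Plugging in $c=M-\tfrac12\ell$ — which lies in $(a,M)$ exactly because we reduced to $M>a+\tfrac12\ell$ — gives $\ell\,(M-\tfrac12\ell)^\beta\le \ell\,(a+\tfrac12\ell)^\beta$, hence $M-\tfrac12\ell\le a+\tfrac12\ell$, i.e.\ $M\le a+\ell$, as wanted. The only genuinely delicate points are (i) recognizing that one must combine the sharp $a+\tfrac12\ell$ bound on the segment with the tuned choice $c=M-\tfrac12\ell$, and (ii) making the ``last crossing of level $c$'' argument rigorous and checking that the two climbing subarcs are essentially disjoint so that their Euclidean lengths add.
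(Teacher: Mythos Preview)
Your proof is correct and follows the same overall strategy as the paper: sandwich $D_\mu(\gamma)$ between an upper bound coming from the segment $[x,y]$ and a lower bound coming from the climb of $d_\mu$ on either side of the maximizing point. The technical implementation differs, however. The paper integrates the Lipschitz profile exactly on both sides: on the segment it bounds $d_\mu((1-t)x+ty)\le \delta_0+tr$ and computes $D_\mu([x,y])\le \frac{1}{\beta+1}\big((\delta_0+r)^{\beta+1}-\delta_0^{\beta+1}\big)$; on the geodesic it integrates $(\delta-t)^\beta$ from the peak down to each endpoint to obtain $D_\mu(\gamma)\ge \frac{2}{\beta+1}\big(\delta^{\beta+1}-\delta_0^{\beta+1}\big)$, and concludes via $2\delta^{\beta+1}\le (\delta_0+r)^{\beta+1}+\delta_0^{\beta+1}$. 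You instead use constant bounds on the integrands: the midpoint bound $d_\mu\le a+\tfrac12\ell$ on the segment (in fact sharper than the paper's one-sided estimate, by convexity of $t\mapsto t^{\beta+1}$), and a single level-set threshold $c$ on the geodesic, with the free parameter then tuned to $c=M-\tfrac12\ell$. Your version avoids the explicit antiderivatives and the final algebraic step is cleaner; the paper's version extracts slightly more from the lower bound (the full integral rather than a rectangle) but does not need it here. Both routes reach the same conclusion with equal rigor.
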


\begin{proof}
Denote $r = \|x-y\|$ and $\delta_0 = \max\bLp d_\mu(x),d_\mu(y)\bRp$.
For all $t\in[0,1]$, by Lipschitz property of $d_\mu$, $d_\mu\bLp (1-t)x + ty\bRp
\le d_\mu(x) + t\|x-y\| \le \delta_0 + tr$, so that
\begin{align}
    \label{eq:lemma:geodesic_domain_delta:1}
    D_\mu([x,y])
    \le r\int_0^1 \bLp \delta_0 + tr\bRp^\beta \d t
    = \frac{1}{\beta+1} \Lp \bLp \delta_0+r\bRp^{\beta+1}-\delta_0^{\beta+1}\Rp~.
\end{align}
Let $\gamma\in\Gamma_{\mu,\F}^\star(x,y)$ be an arc length parameterization, that is, $\gamma : [0,|\gamma|] \to \RR^d$ with $\|\dot\gamma(t)\|=1$ almost everywhere.
Denote $\|d_\mu\|_{\infty,\gamma} = \delta$. If $\delta<\delta_0$ then the result is immediate.
Else, assume that $\delta \ge \delta_0$ and denote $\tau\in[0,|\gamma|]$ such that $d_\mu(\gamma(\tau)) = \delta$.
The Lipschitz property of $d_\mu$ ensures that $\delta - d_\mu(x) \le \tau$ and the \ac{fdtm} length of the section of $\gamma$ between times $\tau-\delta+d_\mu(x)$ and $\tau$ can be lower bounded as
\begin{align*}
    \int_{\tau-\delta+d_\mu(x)}^\tau d_\mu(\gamma(t))^\beta \d t
    \ge \int_0^{\delta-d_\mu(x)} (\delta-t)^\beta \d t
    = \frac{1}{\beta+1} \bLp\delta^{\beta+1} - d_\mu(x)^{\beta+1}\bRp~.
\end{align*}
Doing the same reasoning for the integral between $\tau$ and $\tau + \delta-d_\mu(y)$ then implies that
\begin{align}
    \label{eq:lemma:geodesic_domain_delta:2}
    D_{\mu,\F}(\gamma)
    \ge \frac{1}{\beta+1} \bLp\delta^{\beta+1} - d_\mu(x)^{\beta+1}\bRp
        + \frac{1}{\beta+1} \bLp\delta^{\beta+1} - d_\mu(y)^{\beta+1}\bRp
    \ge \frac{2}{\beta+1} \bLp\delta^{\beta+1} - \delta_0^{\beta+1}\bRp~.
\end{align}
Now, the fact that $\gamma$ is a geodesic together with \cref{eq:lemma:geodesic_domain_delta:1,eq:lemma:geodesic_domain_delta:2} imply that
\begin{align*}
    \frac{2}{\beta+1} \bLp\delta^{\beta+1} - \delta_0^{\beta+1}\bRp
    \le D_\mu(\gamma)
    \le D_\mu([x,y])
    \le \frac{1}{\beta+1} \Lp \bLp \delta_0+r\bRp^{\beta+1}-\delta_0^{\beta+1}\Rp~,
\end{align*}
hence
\begin{align*}
    \delta^{\beta+1}
    \le \frac12 \Lp\bLp \delta_0+r\bRp^{\beta+1} + \delta_0^{\beta+1}\Rp
    \le \bLp \delta_0+r\bRp^{\beta+1}
\end{align*}
and finally
\begin{align*}
    \|d_\mu\|_{\infty,\gamma}
    = \delta
    \le \delta_0 + r
    = \max\bLp d_\mu(x),d_\mu(y)\bRp + \|x-y\|~.
\end{align*}
\end{proof}

\begin{proof}[Proof of \cref{corol:euclidean_bound_local}]
Let $\gamma$ be an arc length parameterization of a geodesic. From \cref{thm:intro:euclidean_bound,lemma:geodesic_domain_delta} we know that
\begin{align*}
    |\gamma|
    \le \lambda \Lp\max\bLp d_\mu(x),d_\mu(y)\bRp + \|x-y\|\Rp
\end{align*}
where $\lambda$ is the hidden constant in \cref{eq:thm:euclidean_bound}.
If $\max\bLp d_\mu(x),d_\mu(y)\bRp \le 2^{\beta+2} \|x-y\|$ then the result follows immediately as
\begin{align*}
    |\gamma|
    \le (2^{\beta+2}+1)\lambda \|x-y\|~.
\end{align*}
Else, assume without loss of generality that $d_\mu(x) > 2^{\beta+2}\|x-y\|$ and notice that by Lipschitz property of $d_\mu$,
\begin{align}
    \label{eq:proof:euclidean_bound_local1}
    D_\mu(\gamma)
    \le D_\mu([x,y])
    \le \|x-y\| \Lp d_\mu(x) + \frac{d_\mu(x)}{2^{\beta+2}}\Rp^\beta
    \le 2 \|x-y\| d_\mu(x)^\beta~.
\end{align}
On the other hand,
\begin{align}
    \label{eq:proof:euclidean_bound_local2}
    D_\mu(\gamma)
    = \int_0^{|\gamma|} d_\mu\bLp\gamma(t)\bRp^\beta \d t
    \ge \int_0^{|\gamma| \wedge d_\mu(x)} \bLp d_\mu(x) - t \bRp^\beta \d t~.
\end{align}
Let us show that $|\gamma| < \frac12 d_\mu(x)$. Indeed, assuming that $|\gamma| \ge \frac12 d_\mu(x)$ implies that
\begin{align*}
    D_\mu(\gamma)
    \ge \int_0^{\frac12d_\mu(x)} \bLp d_\mu(x) - t \bRp^\beta \d t
    \ge \frac{d_\mu(x)}{2} \Lp d_\mu(x) - \frac{d_\mu(x)}{2} \Rp^\beta
    = \Lp\frac{d_\mu(x)}{2}\Rp^{\beta+1}~,
\end{align*}
hence with \cref{eq:proof:euclidean_bound_local1},
\begin{align*}
    d_\mu(x)^{\beta+1}
    \le 2^{\beta+1} D_\mu(\gamma)
    \le 2^{\beta+2} \|x-y\| d_\mu(x)^\beta
\end{align*}
and
\begin{align*}
    d_\mu(x)
    \le 2^{\beta+2} \|x-y\|~,
\end{align*}
leading to a contradiction.
Now, the fact that $|\gamma| \ge \frac12 d_\mu(x)$ allows to further lower bound \cref{eq:proof:euclidean_bound_local2} as
\begin{align*}
    D_\mu(\gamma)
    \ge \int_0^{|\gamma|} \bLp d_\mu(x) - t \bRp^\beta \d t
    \ge |\gamma| \Lp\frac{d_\mu(x)}{2}\Rp^\beta~.
\end{align*}
Together with \cref{eq:proof:euclidean_bound_local1} this finally implies that
\begin{align*}
    |\gamma|
    \le \Lp\frac{2}{d_\mu(x)}\Rp^\beta \cdot 2 \|x-y\| d_\mu(x)^\beta
    \le 2^{\beta+1} \|x-y\|~.
\end{align*}
In either case it always holds that
\begin{align*}
    |\gamma|
    \le (2^{\beta+2}+1)\lambda \|x-y\|~,
\end{align*}
which concludes the proof.
\end{proof}

\section{Stability with regard to the Domain}
\label{app:stability_domain}

This section is devoted to the proof of \cref{thm:fdtm_stab_domain_intermediate}.
We fix $\K\subset\RR^d$, $\F\subset\K$ and $\mu\in\cM_\K$ and assume without loss of generality that $\diam(\K)=1$ (see \cref{app:restriction_compact}), which allows to upper bound $d_\mu \le 1$ over $\K$ (see \crefnopar{eq:dtm_uniform_bound}).
In particular, since $d_\mu$ is $1$-Lipschitz and $t\in[0,1] \mapsto t^\beta$ is $\beta$-Lipschitz, it follows that $d_\mu^\beta$ is $\beta$-Lipschitz over $\K$.

\subsection{Preliminary Bounds}

First, we provide various upper bounds that will be used to compare the \ac{fdtm} length of slightly offset paths.

\begin{lemma}
\label{lemma:fdtm_upper_bound_segment}
Let $x,y\in\K$.
Then
\begin{align*}
    D_\mu([x,y]) \le D_{\mu,\F}(x,y) + \frac{\beta}{2} \|x-y\|^2~.
\end{align*}
\end{lemma}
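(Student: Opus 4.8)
The plan is to sandwich the two quantities, bounding $D_\mu([x,y])$ from above and $D_{\mu,\F}(x,y)$ from below, both in terms of only the two scalars $a := d_\mu(x)$, $b := d_\mu(y)$ and the length $L := \|x-y\|$, using nothing beyond the $1$-Lipschitz property of $d_\mu$ (\cref{eq:dtm_lip}), its non-negativity, and the bound $d_\mu\le\diam(\K)=1$ on $\K$ (\cref{eq:dtm_uniform_bound}); recall also that $[x,y]\in\Gamma_\F(x,y)$ always, so the lemma is really an estimate on the gap $D_\mu([x,y])-D_{\mu,\F}(x,y)\ge 0$. Write $h := L/2\le\tfrac12$. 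For the upper bound I would parametrize $[x,y]$ by arc length $\ell\in[0,L]$: on the first half $\ell\in[0,h]$ the Lipschitz bound from $x$ together with the global bound give $d_\mu\le\min(1,a+\ell)$, and symmetrically on the second half $d_\mu\le\min(1,b+(L-\ell))$, so after the substitutions $s=a+\ell$ and $s=b+L-\ell$,
\[
    D_\mu([x,y]) \le \int_a^{a+h}\min(1,s)^\beta\,\d s + \int_b^{b+h}\min(1,s)^\beta\,\d s .
\]

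For the lower bound, let $\gamma\in\Gamma_\F(x,y)$ be any admissible path, reparametrized by arc length on $[0,\Lambda]$ with $\Lambda=|\gamma|\ge L$. On $[0,h]$ the Lipschitz bound from $\gamma(0)=x$ and non-negativity give $d_\mu(\gamma(t))\ge\max(0,a-t)$, and on $[\Lambda-h,\Lambda]$ similarly $d_\mu(\gamma(t))\ge\max(0,b-(\Lambda-t))$; these intervals are disjoint because $\Lambda\ge 2h$, so
\[
    D_\mu(\gamma) \ge \int_0^h\max(0,a-t)^\beta\,\d t + \int_{\Lambda-h}^{\Lambda}\max(0,b-(\Lambda-t))^\beta\,\d t = \int_{(a-h)_+}^{a}s^\beta\,\d s + \int_{(b-h)_+}^{b}s^\beta\,\d s ,
\]
where I used $\Lambda\ge L$ to enlarge the $t$-windows back to length $h$. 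Taking the infimum over $\gamma$ gives the same lower bound for $D_{\mu,\F}(x,y)$ (alternatively one may pick a geodesic via \cref{thm:intro:geodesic}).

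Subtracting the two displays, the whole lemma reduces to the one-variable elementary estimate: for every $c\in[0,1]$ and $h\in[0,\tfrac12]$,
\[
    \int_c^{c+h}\min(1,s)^\beta\,\d s - \int_{(c-h)_+}^{c}s^\beta\,\d s \le \beta h^2 ,
\]
applied with $c=a$ and with $c=b$ and then summed, which yields $D_\mu([x,y])-D_{\mu,\F}(x,y)\le 2\beta h^2=\tfrac{\beta}{2}\|x-y\|^2$. To prove this estimate I would set $F(s)=s^{\beta+1}/(\beta+1)$, so $F'(s)=s^\beta$ and $F''(s)=\beta s^{\beta-1}\le\beta$ on $[0,1]$, and note that dropping the cap gives $\int_c^{c+h}\min(1,s)^\beta\,\d s\le F(c+h)-F(c)$. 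In the generic regime $c\ge h$, $c+h\le 1$ the left-hand side is then at most $F(c+h)+F(c-h)-2F(c)=\int_0^h(h-w)\bigl(F''(c+w)+F''(c-w)\bigr)\,\d w\le\beta h^2$ by Taylor's formula with integral remainder; the boundary regimes $c<h$ and $c+h>1$ are handled by the same convexity device, this time retaining the terms $-2F(c)$ and $F((c-h)_+)$ (and, when $c+h>1$, using the exact value $F(1)-F(c)+(c+h-1)$ for the left integral) instead of discarding them.

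The main obstacle is precisely this last elementary inequality: it is essentially tight — for $\beta=1$ the generic case collapses to $\tfrac12(c-h)^2\ge 0$ — so no lossy estimate is admissible, and the combination of the cap $\min(1,\cdot)$ with the several boundary regimes forces a short but careful case analysis. Everything geometric — that a straight segment cannot beat, by more than $\tfrac{\beta}{2}\|x-y\|^2$, any admissible detour, because $d_\mu$ varies slowly — is already encoded in the two sandwiching inequalities, which follow at once from the $1$-Lipschitz property and the fact that any path joining $x$ to $y$ has Euclidean length at least $\|x-y\|$.
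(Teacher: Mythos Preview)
Your argument is correct but takes a genuinely different route from the paper. The paper's proof picks a geodesic $\gamma\in\Gamma_{\mu,\F}^\star(x,y)$ and compares the segment $\overline\gamma=[x,y]$ to it \emph{pointwise}: with the time rescaling $\rho=\|x-y\|/|\gamma|$, it bounds $\|\overline\gamma(t)-\gamma(\rho t)\|\le 2\|x-y\|\,t$ on $[0,\tfrac12]$ (both start at $x$ and move with commensurate speeds), applies the $\beta$-Lipschitz property of $d_\mu^\beta$ on $\K$, and integrates; the $\tfrac{\beta}{2}\|x-y\|^2$ comes directly from $2\cdot\beta\|x-y\|\int_0^{1/2}2\|x-y\|\,t\,\d t$. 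No scalar inequality is needed, only the coupling of the two paths.

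Your approach instead \emph{decouples}: you bound $D_\mu([x,y])$ from above and $D_{\mu,\F}(x,y)$ from below using only the endpoint values $a=d_\mu(x)$, $b=d_\mu(y)$, the $1$-Lipschitz property, non-negativity, and the cap $d_\mu\le 1$, and then show the difference of the two envelopes is at most $\beta h^2$ per half. This never refers to a geodesic (the lower bound holds for every admissible path), which is a mild conceptual bonus. The price is the one-variable inequality with its boundary cases; your sketch of those is a bit terse, and the cleanest way to make it uniform is to introduce the single antiderivative $G(s)=\int_0^{s}\min(1,r)_+^\beta\,\d r$ on all of $\RR$ (so $G\equiv 0$ on $(-\infty,0]$), observe that $G'(s)=\min(1,s)_+^\beta$ is $\beta$-Lipschitz, and write the left-hand side in every regime as $G(c+h)+G(c-h)-2G(c)=\int_{c-h}^{c}\bigl(G'(s+h)-G'(s)\bigr)\,\d s\le\beta h^2$. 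With that in place your proof is complete and, like the paper's, sharp for $\beta=1$.
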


\begin{proof}
Let $\gamma\in\Gamma_{\mu,\F}^\star(x,y)$ be a geodesic and $\overline\gamma = [x,y]$ the straight path from $x$ to $y$, both with an arc length parameterization.
First, cut the integral in two halves as
\begin{align*}
    D_{\mu,\F}([x,y])
    = \int_0^{|\overline\gamma|/2} d_\mu(\overline\gamma(t))^\beta \d t
        +\int_{|\overline\gamma|/2}^{|\overline\gamma|} d_\mu(\overline\gamma(t))^\beta \d t~.
\end{align*}
Let us compare each half to its equivalent half in $D_{\mu,\F}(x,y) = D_\mu(\gamma)$.
Recall that $d_\mu^\beta$ is $\beta$-Lipschitz over $\K$. Then
\begin{align*}
    \int_0^{|\overline\gamma|/2} d_\mu(\overline\gamma(t))^\beta \d t
    \le \int_0^{|\overline\gamma|/2} d_\mu(\gamma(t))^\beta \d t
        + \beta \int_0^{|\overline\gamma|/2} \|\overline\gamma(t) - \gamma(t)\| \d t~.
\end{align*}
The first term is less than the first half of the integral $D_\mu(\gamma)$ as $|\overline\gamma| = \|x-y\| \le |\gamma|$.
As for the second term, both $\overline\gamma$ and $\gamma$ are $1$-Lipschitz and coincide at $0$ so that $\|\overline\gamma(t) - \gamma(t)\| \le 2t$. Then
\begin{align*}
    \int_0^{|\overline\gamma|/2} d_\mu(\overline\gamma(t))^\beta \d t
    \le \int_0^{|\gamma|/2} d_\mu(\gamma(t))^\beta \d t + \frac{\beta}{4}\|x-y\|^2~.
\end{align*}
With the exact same reasoning, a similar bound holds for the end of the paths as
\begin{align*}
    \int_{|\overline\gamma|/2}^{|\overline\gamma|} d_\mu(\overline\gamma(t))^\beta \d t
    \le \int_{|\gamma|/2}^{|\gamma|} d_\mu(\gamma(t))^\beta \d t + \frac{\beta}{4}\|x-y\|^2~.
\end{align*}
Finally, summing both inequalities concludes that
\begin{align*}
    D_{\mu,\F}([x,y])
    \le D_{\mu,\F}(x,y) + \frac{\beta}{2}\|x-y\|^2~.
\end{align*}
\end{proof}

\begin{lemma}
\label{lemma:fdtm_upper_bounds}
Let $x,y,x',y'\in\K$ such that $\|x-x'\| \le \epsilon$ and $\|y-y'\| \le \epsilon$.
Then
\begin{align}
    \label{eq:lemma:fdtm_upper_bounds1}
    D_\mu([x',y'])
    &\le D_\mu([x,y]) + \beta\|x-y\|\epsilon + 2\epsilon~.
\end{align}
Moreover, if $x,y\in \F$,
\begin{align}
    \label{eq:lemma:fdtm_upper_bounds2}
    D_\mu([x',y'])
    &\le D_{\mu,\F}(x,y) + \frac{\beta}{2} \bLp\|x-y\|+2\epsilon\bRp^2 + 2\epsilon~.
\end{align}
\end{lemma}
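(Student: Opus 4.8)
The plan is to reduce both inequalities to a pointwise comparison of the two segments parameterized over the same interval, using only the uniform bound $d_\mu\le 1$ on $\K$ (recall that we may assume $\diam(\K)=1$) and the fact that $d_\mu^\beta$ is $\beta$-Lipschitz on $\K$, exactly as in the proof of \cref{lemma:fdtm_upper_bound_segment}.

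First I would parameterize $[x,y]$ and $[x',y']$ by $\gamma(t)=(1-t)x+ty$ and $\gamma'(t)=(1-t)x'+ty'$ on $[0,1]$. Convexity of $\K$ gives $\gamma(t),\gamma'(t)\in\K$, and the triangle inequality gives $\|\gamma(t)-\gamma'(t)\|\le(1-t)\|x-x'\|+t\|y-y'\|\le\epsilon$ for all $t$. Writing $A=\int_0^1 d_\mu(\gamma(t))^\beta\,dt$ and $A'=\int_0^1 d_\mu(\gamma'(t))^\beta\,dt$, so that $D_\mu([x,y])=\|x-y\|A$ and $D_\mu([x',y'])=\|x'-y'\|A'$, the $\beta$-Lipschitz property of $d_\mu^\beta$ on $\K$ yields $A'\le A+\beta\epsilon$, while $d_\mu\le 1$ on $\K$ gives $A'\le 1$.

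The only point requiring a little care — to land exactly on \cref{eq:lemma:fdtm_upper_bounds1} rather than on a bound carrying a spurious $\epsilon^2$ term — is how to combine these estimates. I would split $\|x'-y'\|\le\|x-y\|+2\epsilon$ and bound $D_\mu([x',y'])=\|x'-y'\|A'\le\|x-y\|A'+2\epsilon A'$, then apply $A'\le A+\beta\epsilon$ \emph{only} to the first summand (giving $\|x-y\|A'\le D_\mu([x,y])+\beta\|x-y\|\epsilon$) and apply $A'\le 1$ \emph{only} to the second (giving $2\epsilon A'\le 2\epsilon$). Summing these two bounds establishes \cref{eq:lemma:fdtm_upper_bounds1}.

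For \cref{eq:lemma:fdtm_upper_bounds2}, when $x,y\in\F$ I would insert $D_\mu([x,y])\le D_{\mu,\F}(x,y)+\tfrac{\beta}{2}\|x-y\|^2$ from \cref{lemma:fdtm_upper_bound_segment} into \cref{eq:lemma:fdtm_upper_bounds1}, obtaining $D_\mu([x',y'])\le D_{\mu,\F}(x,y)+\tfrac{\beta}{2}\|x-y\|^2+\beta\|x-y\|\epsilon+2\epsilon$, and conclude with the elementary inequality $\tfrac{\beta}{2}\|x-y\|^2+\beta\|x-y\|\epsilon\le\tfrac{\beta}{2}(\|x-y\|+2\epsilon)^2$, valid since $(\|x-y\|+2\epsilon)^2-\|x-y\|^2-2\|x-y\|\epsilon=2\|x-y\|\epsilon+4\epsilon^2\ge 0$. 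There is no genuine obstacle; the only thing to watch is the bookkeeping in the previous paragraph so that no second-order term creeps in.
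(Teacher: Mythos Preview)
Your proof of \cref{eq:lemma:fdtm_upper_bounds1} is essentially identical to the paper's: both parameterize the two segments over $[0,1]$, split $\|x'-y'\|\le\|x-y\|+2\epsilon$, use $d_\mu\le 1$ on the $2\epsilon$ piece and the $\beta$-Lipschitz bound on the $\|x-y\|$ piece.

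For \cref{eq:lemma:fdtm_upper_bounds2} you take a genuinely different (and slightly slicker) route. The paper does not reuse \cref{eq:lemma:fdtm_upper_bounds1}; instead it applies \cref{lemma:fdtm_upper_bound_segment} directly to the segment $[x',y']$ to get $D_\mu([x',y'])\le D_{\mu,\F}(x',y')+\tfrac{\beta}{2}\|x'-y'\|^2$, then uses the triangle inequality $D_{\mu,\F}(x',y')\le D_{\mu,\F}(x',x)+D_{\mu,\F}(x,y)+D_{\mu,\F}(y,y')$ (valid because the intermediate points $x,y$ lie in $\F$) together with $D_{\mu,\F}(x',x),D_{\mu,\F}(y,y')\le\epsilon$ from $d_\mu\le 1$. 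Your approach instead feeds \cref{lemma:fdtm_upper_bound_segment} applied to $[x,y]$ into \cref{eq:lemma:fdtm_upper_bounds1} and finishes with the algebraic inequality $\tfrac{\beta}{2}\|x-y\|^2+\beta\|x-y\|\epsilon\le\tfrac{\beta}{2}(\|x-y\|+2\epsilon)^2$. Both are correct; yours is a bit more economical since it recycles the first part, and in fact it never uses the hypothesis $x,y\in\F$, so it proves a marginally stronger statement than advertised.
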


\begin{proof}
First, by triangular inequality,
\begin{align}
    \label{eq:proof:fdtm_upper_bounds1}
    \|x'-y'\|
    \le \|x'-x\| + \|x-y\| + \|y-y'\|
    \le \|x-y\| + 2\epsilon
\end{align}
and
\begin{align}
    \label{eq:proof:fdtm_upper_bounds2}
    \Lvv \bLp(1-t)x+ty\bRp - \bLp(1-t)x'+ty'\bRp \Rvv
    \le (1-t)\|x-x'\| + t\|y-y'\|
    \le \epsilon~.
\end{align}
Recall that $d_\mu^\beta$ is $\beta$-Lipschitz over $\K$. Then
\begin{align*}
    D_\mu([x',y'])
    &= \|x'-y'\| \int_0^1 d_\mu\bLp(1-t)x' + ty'\bRp^\beta \d t\\
    &\le \bLp\|x-y\|+2\epsilon\bRp \int_0^1 d_\mu\bLp(1-t)x' + ty'\bRp^\beta \d t
        && \textrm{(\crefnopar{eq:proof:fdtm_upper_bounds1})}\\
    &\le \|x-y\| \int_0^1 d_\mu\bLp(1-t)x' + ty'\bRp^\beta \d t + 2\epsilon
        && \textrm{($d_\mu \le 1$ over $\K$)}\\
    &\le \|x-y\| \int_0^1 \Lp d_\mu\bLp(1-t)x + ty\bRp^\beta + \beta\epsilon \Rp \d t + 2\epsilon
        && \textrm{(\cref{eq:proof:fdtm_upper_bounds2})}\\
    &= D_\mu([x,y]) + \beta\|x-y\|\epsilon + 2\epsilon
\end{align*}
which is \cref{eq:lemma:fdtm_upper_bounds1}.
Alternatively, if $x,y\in \F$, then $[x',x]+[x,y]+[y,x'] \in \Gamma_\F(x',y')$ and
\begin{align*}
    D_\mu([x',y'])
    &\le D_{\mu,\F}(x',y') + \frac{\beta}{2}\|x'-y'\|^2
        && \textrm{(\cref{lemma:fdtm_upper_bound_segment})}\\
    &\le D_{\mu,\F}(x,y) + \frac{\beta}{2}\|x'-y'\|^2
    + D_{\mu,\F}(x',x) + D_{\mu,\F}(y,y')
        && \textrm{(triangular inequality)}\\
    &\le D_{\mu,\F}(x,y) + \frac{\beta}{2} \bLp\|x-y\|+2\epsilon\bRp^2 + 2\epsilon
        && \textrm{(\cref{eq:proof:fdtm_upper_bounds1} and $d_\mu \le 1$)}
\end{align*}
which is \cref{eq:lemma:fdtm_upper_bounds2}.
\end{proof}

\subsection{Main Proof}

In order to establish the stability result we show that for all $r>0$ any path can be decomposed into sections of length close to $r$, excluding long outer sections that cannot be broken down, and with the exception of a possible small section between two long outer sections.
This decomposition will be applied to a geodesic $\gamma\in\Gamma_{\mu,\F}^\star(x,y)$ to construct a path in $\Gamma_\G(x,y)$ that approximates $\gamma$.
From now on, the convention used for path parameterization is to use an arc length parameterization: $\gamma : [0,|\gamma|] \to \RR^d$ with $\|\dot\gamma(t)\|=1$ almost everywhere.

\begin{lemma}[Path Decomposition]
\label{lemma:geodesic_decomposition}
Let $\gamma\in\Gamma_\F(x,y)$.
For all $r>0$ one can construct a sequence of parameters $0=t_0,\dots,t_N=|\gamma|$ and associated points $x_i=\gamma(t_i)$ such that
\begin{itemize}
    \item For all $i\in\{0\dots,N\}$, $x_i \in \F\cup\{x,y\}$.
    \item For all $i\in\{0\dots,N-1\}$, the section $[t_i,t_{i+1}]$ satisfies one of the following:
    \begin{enumerate}[(a)]
        \item $t_{i+1}-t_i \ge r$ and $[t_i,t_{i+1}]$ is an outer section of $\gamma$ \ac{wrt} the domain $\F$.
        \item $\frac{1}{2}r \le t_{i+1}-t_i \le 2r$.
        \item $t_{i+1}-t_i \le \frac{1}{2}r$ and nearby sections $[t_{i-1},t_i]$ and $[t_{i+1},t_{i+2}]$ (if defined) both satisfy~(a).
    \end{enumerate}
\end{itemize}
Furthermore, denote $\{0\dots,N-1\} = A\sqcup B\sqcup C$ with $A$ (resp. $B$ and $C$) a set of indices $i$ such that $[t_i,t_{i+1}]$ satisfies (a) (resp. (b) and (c)).
Assuming that $\gamma$ is not composed of a single section of type (c), it holds that
\begin{align}
    \label{eq:lemma:geodesic_decomposition:length}
    |\gamma|
    \ge \frac14\Lp 2\sum_{i\in A} (t_{i+1}-t_i) + 2|B|r + |C|r \Rp~.
\end{align}
\end{lemma}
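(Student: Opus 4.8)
# Proof Plan for Lemma \ref{lemma:geodesic_decomposition}

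The plan is to construct the subdivision greedily, sweeping along $\gamma$ from $t_0 = 0$ and, at each step, deciding the next breakpoint by a case analysis. Suppose $t_i$ has been chosen with $x_i = \gamma(t_i) \in \F \cup \{x,y\}$. If the point $\gamma(t_i)$ lies on an outer section, let $[t_i, s]$ be the maximal such outer section (so $\gamma(s) \in \F$ by definition of admissible paths, with $\gamma(t_i), \gamma(s)$ the endpoints of the straight segment). If $s - t_i \ge r$, set $t_{i+1} = s$; this is a section of type (a). If $s - t_i < r$, then the outer section is short, and we simply continue past it: we look for the next breakpoint at distance roughly $r$ from $t_i$, landing inside $\F$. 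The generic move is: from $t_i$, advance by $r$ along $\gamma$; if $\gamma(t_i + r) \in \F$, set $t_{i+1} = t_i + r$ (type (b), since $r \in [\tfrac12 r, 2r]$); if not, $\gamma(t_i + r)$ is in an outer section, so slide forward to the end $s'$ of that outer section, which is in $\F$. If $s' - t_i \le 2r$, set $t_{i+1} = s'$ (type (b)); if $s' - t_i > 2r$, then the outer section starting somewhere in $(t_i, t_i+r]$ is long, so we first place a breakpoint at its starting parameter $u \le t_i + r$ — but $\gamma(u)$ must be in $\F$ since outer sections start in $\F$ — giving a section $[t_i, u]$ of length $\le r$. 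If that length is $\ge \tfrac12 r$ it is type (b); if $< \tfrac12 r$ it is type (c), and we note that it is flanked by the long outer section $[u, s']$ of type (a), and (we will need to check) its predecessor $[t_{i-1}, t_i]$ is also of type (a) by construction. We terminate when we reach $|\gamma|$; the endpoint $\gamma(|\gamma|) = y \in \F \cup \{x,y\}$, and if the final leftover piece is shorter than $\tfrac12 r$ we must argue it can only be type (c) — here we may need to merge it with the previous section if that would otherwise be violated, or simply note that a short final piece must be preceded by a long outer section by the way breakpoints were placed.

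The second part is the length inequality \eqref{eq:lemma:geodesic_decomposition:length}. By construction every type-(c) section is immediately preceded by a type-(a) section (and, when it is not the last section, also immediately followed by one). Hence we can define an injection from $C$ into $A$ by sending each type-(c) index $i$ to its predecessor $i - 1 \in A$ — injective because consecutive type-(c) sections cannot occur (a type-(c) section's successor satisfies (a), not (c)). For $i \in C$ with $j = i-1 \in A$ its matched partner, we have $t_{i+1} - t_i \le \tfrac12 r$ while $t_{j+1} - t_j \ge r$, so
\begin{align*}
    2(t_{j+1} - t_j) \ge (t_{j+1} - t_j) + r \ge (t_{j+1} - t_j) + (t_{i+1} - t_i) + \tfrac12 r.
\end{align*}
Writing $A = A_{\mathrm{matched}} \sqcup A_{\mathrm{free}}$ according to whether $j \in A$ is the image of some $i \in C$, summing the above over matched pairs and $2(t_{i+1}-t_i) \ge r$ over $i \in B$ and $2(t_{i+1} - t_i) \ge 2r$ (indeed $\ge 2(t_{i+1}-t_i)$) trivially over $i \in A_{\mathrm{free}}$, then adding everything:
\begin{align*}
    2|\gamma|
    = 2\sum_{i} (t_{i+1} - t_i)
    \ge 2\sum_{i \in A}(t_{i+1}-t_i) + 2|B| r + |C| r
    \ge \tfrac12\Lp 2\sum_{i\in A}(t_{i+1}-t_i) + 2|B|r + |C|r \Rp
\end{align*}
where the last step uses $2\sum_{i\in A}(t_{i+1}-t_i) \ge \tfrac12 \cdot 2\sum_{i\in A}(t_{i+1}-t_i) + \tfrac12|C|r$ — wait, this needs the matching more carefully. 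Cleaner: from the matched inequality, $\sum_{i\in C}(t_{i+1}-t_i) + \tfrac12|C|r \le \sum_{j \in A_{\mathrm{matched}}}(t_{j+1}-t_j) \le \sum_{j\in A}(t_{j+1}-t_j)$, so $|C| r \le 2\sum_{i\in A}(t_{i+1}-t_i)$ since each type-(c) term is nonnegative. Combining $|\gamma| = \sum_A + \sum_B + \sum_C \ge \sum_A + \tfrac12|B|r$ (using $t_{i+1}-t_i \ge \tfrac12 r$ on $B$ and $\ge 0$ on $C$) with $|\gamma| \ge \sum_A$ and the bound $|C|r \le 2\sum_A$ yields, after taking a suitable convex combination, $|\gamma| \ge \tfrac14(2\sum_A + 2|B|r + |C|r)$; the precise coefficients are a routine bookkeeping check.

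The main obstacle I anticipate is \emph{not} the length estimate but the boundary-case handling in the construction: guaranteeing that every short (type-(c)) section is genuinely sandwiched between two long outer sections, and in particular dealing cleanly with (i) the very first section — if $\gamma$ begins with a short outer segment followed by a long one, the would-be type-(c) section has no predecessor, which is why the lemma excludes the degenerate case of a single type-(c) section and why the claim about predecessor $[t_{i-1},t_i]$ must be stated "when defined"; (ii) the final section at $t = |\gamma|$; and (iii) ensuring the greedy advance-by-$r$ step never strands us at a point outside $\F$. Each of these requires verifying that outer sections, being maximal straight pieces with endpoints in $\F$, let us always retreat or advance to a point of $\F$ within the allotted budget. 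Once the construction is pinned down so that the $A$–$C$ matching is well-defined and injective, the inequality \eqref{eq:lemma:geodesic_decomposition:length} follows by the elementary summation above.
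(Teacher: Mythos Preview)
Your length-inequality argument is essentially the paper's: the sandwiching property gives $|C|\le|A|+1$, hence $|C|\le 2|A|$ once the single-type-(c) case is excluded, and then
\[
|\gamma|\ \ge\ \sum_{i\in A}(t_{i+1}-t_i)+\tfrac12|B|r\ \ge\ \tfrac12\sum_A+\tfrac12|A|r+\tfrac12|B|r\ \ge\ \tfrac14\Bigl(2\sum_A+2|B|r+|C|r\Bigr).
\]

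The construction, however, has a real gap. Your greedy sweep does \emph{not} guarantee that a type-(c) section's predecessor is of type (a). Take $\gamma$ with $\gamma([0,1.2r])\subset\F$ followed by a long outer section on $[1.2r,5r]$. Your algorithm sets $t_1=r$ (type (b), since $\gamma(r)\in\F$); from $t_1$ it advances to $\gamma(2r)\notin\F$, slides to $s'=5r$, finds $s'-t_1=4r>2r$, backtracks to $u=1.2r$, and declares $[r,1.2r]$ type (c)---preceded by the type-(b) section $[0,r]$. The parenthetical ``(we will need to check) its predecessor is also of type (a) by construction'' is precisely where the argument breaks, and no local fix (merging with the predecessor, resplitting) stays within the $[\tfrac12 r,2r]$ window in general.

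The paper sidesteps this by reversing the order of operations: first extract \emph{all} outer sections of length $\ge r$ as type (a); only then fill in each gap between consecutive type-(a) sections. Inside such a gap every outer section has length $<r$, so the rule $t_{i+1}=\inf\{t\ge t_i+\tfrac12 r:\gamma(t)\in\F\}$ yields steps in $[\tfrac12 r,\tfrac32 r]$ and produces only type-(b) pieces (the final piece lands in $[\tfrac12 r,2r]$); a gap of total length $\le\tfrac12 r$ is declared a single type-(c) section. With this two-pass construction the sandwiching property is automatic: a type-(c) section is by definition an entire gap between two type-(a) sections (or the boundary), so its neighbours, when defined, are type (a).
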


Note that the case of a single section of type (c) is excluded from \cref{eq:lemma:geodesic_decomposition:length} for technical reasons and correspond to the case where the length of the path is negligible compared to the parameter $r$, which will later be chosen proportional to $\sqrt{\max_{x\in \F} d(x,\G)}$.
\cref{thm:fdtm_stab_domain_intermediate} is easy to derive in this case regardless, which will be done at the end of the section.

\begin{proof}
First, identify all the outer sections $[s,t]$ of $\gamma$ such that $(t-s) \ge r$. These sections all satisfy (a), and to conclude it suffices to show that the remaining connected parts of $\gamma$ can each be decomposed into sections of type (b) or (c).

Consider such section $[t_0,t_{-1}]$.
By assumption, no outer section $[s,t] \subset [t_0,t_{-1}]$ has length $t-s \ge r$. In other words, if $t-s \ge r$, then $\gamma([s,t])$ intersects $\F$.
If $|\gamma| \le \frac12r$, then the whole section satisfies (c) right away and no decomposition is needed.
Else, we construct the decomposition iteratively. Assuming $t_0,\dots,t_i$ are constructed and $t_{-1}-t_i \ge \frac{1}{2}r$---which is true for $i=0$---we do the following:
\begin{itemize}
    \item If $t_{-1}-t_i \le 2r$, set $t_{i+1} = t_{-1}$. Then $[t_i,t_{i+1}]$ satisfies (b), which concludes the construction.
    \item Else, set $t_{i+1} = \inf\{t\ge t_i+\frac12r : 
    \gamma(t)\in \F\}$. Then:
    \begin{itemize}
        \item $t_{i+1}-t_i \le \frac32r$.
        Indeed, else $[t_i+\frac12r, t_i+\frac32r]$ would be an outer section of length at least $r$, leading to a contradiction.
        Therefore, $[t_i,t_{i+1}]$ satisfies (b).
        \item We have
        \[t_{-1}-t_{i+1} = t_{-1}-t_i - (t_i-t_{i+1}) \ge 2r - \frac32r = \frac12r~,\]
        which allows to continue the construction.
    \end{itemize}
\end{itemize}
Then each section of this decomposition of $[t_0,t_{-1}]$ satisfy (b), which concludes the construction.

We now proceed to prove \cref{eq:lemma:geodesic_decomposition:length}.
By definition of the decomposition, after a type (c) section comes a type (a) section or the end of the path, hence $|C|\le |A|+1$, which in turns implies $|C| \le 2|A|$ provided that $|C|=0$ or $|A|\ge1$, which is true as long as the path is not composed of a unique type (c) section.
In this case,
\begin{align*}
    |\gamma|
    &= \sum_{i\in A\sqcup B\sqcup C} (t_{i+1}-t_i)\\
    &\ge \sum_{i\in A} (t_{i+1}-t_i) + \frac12|B|r
        &&\textrm{($t_{i+1}-t_i \ge r/2$ if $i\in B$)}\\
    &\ge \frac12\sum_{i\in A} (t_{i+1}-t_i) + \frac12|A|r + \frac12|B|r
        &&\textrm{($t_{i+1}-t_i \ge r$ if $i\in A$)}\\
    &\ge \frac14\Lp 2\sum_{i\in A} (t_{i+1}-t_i) + 2|B|r + |C|r \Rp
        &&\textrm{($|A| \ge |C|/2$)}
\end{align*}
which concludes the proof.
\end{proof}

Given a geodesic $\gamma$, \cref{thm:fdtm_stab_domain_intermediate} is proven by constructing a path $\overline\gamma$ admissible for the domain $\G$ as close as possible to $\gamma$.
This is done by decomposing $\gamma$ as in \cref{lemma:geodesic_decomposition} and forming the polygonal path going through the closest neighbors in $\G$ of each intermediate step.
Upper bounds from \cref{lemma:fdtm_upper_bounds} are then used to compare the \ac{fdtm} distance between each section of $\gamma$ and the corresponding segment in $\overline\gamma$.

\begin{proof}[Proof of \cref{thm:fdtm_stab_domain_intermediate}]
Let $\gamma\in\Gamma_{\mu,\F}^\star(x,y)$ be a geodesic and denote
\[\epsilon = \max_{x\in \F} d(x,\G) \le \frac{1}{25\beta}
\qquad \textrm{and} \qquad
r = \sqrt{\frac{\epsilon}{\beta}}~.\]
In particular, $\epsilon \le \frac{r}{5}$.
Consider the decomposition of $\gamma$ given by \cref{lemma:geodesic_decomposition} with such $r$ and assume that we are not in the case of a single type (c) section.
Let $x'_0=x$, $x'_N=y$ and for each $1\le i\le N-1$, $x'_i\in \G$ such that $\|x_i-x'_i\| \le \epsilon$.
The polygonal path $\overline\gamma = [x'_0, x'_1, \dots, x'_N]$ belongs to $\Gamma_\G(x,y)$ as it is made of segments between points of $\G\cup\{x,y\}$.
Then, upper bounding the difference between $D_\mu([x'_i,x'_{i+1}])$ and $D_{\mu,\F}(x_i,x_{i+1})$ and summing over $i$ allows to obtain the desired result.

The following provides bounds for all three cases. They are obtained by using \cref{lemma:fdtm_upper_bounds}, the bounds on $\epsilon$ and by upper bounding complex numerical fractions by ``cleaner'' constants.
If $i\in A$, the geodesic $\gamma$ draws a straight line between $x_i$ and $x_{i+1}$, which implies that this line itself is a geodesic and that $\|x_i-x_{i+1}\| = t_{i+1}-t_i \ge r$.
Then, the first inequality in \cref{lemma:fdtm_upper_bounds} eventually yields
\begin{align}
    \label{eq:proof:stab_domainA}
    D_\mu([x'_i,x'_{i+1}]) - D_{\mu,\F}(x_i,x_{i+1})
    \le \bLp\beta\sqrt{\epsilon} + 2\sqrt{\beta}\bRp (t_{i+1}-t_i) \sqrt{\epsilon}
    \le 5\sqrt{\beta} (t_{i+1}-t_i) \sqrt{\epsilon}~.
\end{align}
If $i\in B$, $\|x_i-x_{i+1}\|\le 2r$ and the second inequality in \cref{lemma:fdtm_upper_bounds} eventually yields
\begin{align}
    \label{eq:proof:stab_domainB}
    D_\mu([x'_i,x'_{i+1}]) - D_{\mu,\F}(x_i,x_{i+1})
    \le \frac{\beta}{2} (2r+2\epsilon)^2 + 2\epsilon
    \le 5\sqrt{\beta}r\sqrt{\epsilon}~.
\end{align}
If $i\in C$, $\|x_i-x_{i+1}\|\le \frac{r}{2}$ and the second inequality in \cref{lemma:fdtm_upper_bounds} eventually yields
\begin{align}
    \label{eq:proof:stab_domainC}
    D_\mu([x'_i,x'_{i+1}]) - D_{\mu,\F}(x_i,x_{i+1})
    \le \frac{\beta}{2} \Lp\frac{r}{2}+2\epsilon\Rp^2 + 2\epsilon
    \le \frac52\sqrt{\beta}r\sqrt{\epsilon}~.
\end{align}
Upper bounding the \ac{fdtm} on the path $\overline{\gamma}$ according to \crefrange{eq:proof:stab_domainA}{eq:proof:stab_domainC} then yields
\begin{align*}
    D_{\mu,\G}(x,y)
    &\le D_\mu(\overline\gamma)\\
    &= \sum_{i\in A\sqcup B\sqcup C} D_\mu([x'_i,x'_{i+1}])\\
    &\le \sum_{i\in A\sqcup B\sqcup C} D_\mu(x_i,x_{i+1})
        + \frac52\sqrt{\beta}\Lp 2\sum_{i\in A} (t_{i+1}-t_i) + 2|B|r + |C|r \Rp \sqrt\epsilon\\
    &\le D_\mu(\gamma) + \frac52\sqrt{\beta} \cdot 4|\gamma| \cdot \sqrt\epsilon
        &&\textrm{(\crefnopar{eq:lemma:geodesic_decomposition:length})}\\
    &= D_{\mu,\F}(x,y) + 10\sqrt{\beta}|\gamma|\sqrt\epsilon~,
\end{align*}
which concludes the proof since $|\gamma|$ is upper bounded by a constant according to \cref{corol:euclidean_bound_uniform}.
It now remains to treat the case of a single type (c) section in the decomposition of $\gamma$.
In this case, $\|x-y\| \le \frac{r}{2} = \frac12\sqrt{\epsilon/\beta}$ along with $\|x-y\| \le |\gamma|$ and we have by \cref{lemma:fdtm_upper_bound_segment}
\begin{align*}
    D_{\mu,\G}(x,y) - D_{\mu,\F}(x,y)
    \le \frac{\beta}{2} \|x-y\|^2
    \le \frac{\beta}{2} |\gamma| \frac12\sqrt{\frac{\epsilon}{\beta}}
    \le 10\sqrt{\beta} |\gamma| \sqrt\epsilon~.
\end{align*}
\end{proof}

\section{Convergence of the Empirical FDTM}
\label{app:estimation}

This section is devoted to proving \cref{thm:fdtm_estimate} which we recall below.
\fdtmEstimate*

This result derives from convergence results regarding $\bv d_\mu^p-d_{\hat\mu_n}^p \bv$ and $\hausdorff(\supp(\mu),\XX_n)$.
The following briefly detail these results and how they imply \cref{thm:fdtm_estimate} using \cref{thm:fdtm_stab_dtm,thm:intro:fdtm_stab_domain}. 
In the following, assume without loss of generality that $\diam(\K)=1$ (see \cref{app:restriction_compact}), which allows to upper bound $d_\mu \le 1$ over $\K$ (see \crefnopar{eq:dtm_uniform_bound}).
First, we state the convergence of the empirical \ac{dtm}.

\begin{proposition}
\label{prop:approx_dtm}
Assume that $\mu$ is $(a,b)$-standard, $\supp(\mu)$ is connected and that $m \le \frac12$.
Then there exists a constant $c>0$ depending on $m$, $p$, $a$ and $b$ such that for all $n\ge\frac1m$,
\begin{align*}
    \EE\bLb\bvv d_\mu^p-d_{\hat\mu_n}^p \bvv_{\infty,\K}\bRb
    \le c d \frac{\log(n)}{\sqrt{n}}~.
\end{align*}
\end{proposition}

\Cref{prop:approx_dtm} is due to \cite[Theorem 3]{chazalRatesConvergenceRobust2016} which provides this convergence rate\footnote{\cite{chazalRatesConvergenceRobust2016} only discusses the case where $m = \frac{k}{n}$ for some integer $k$. The general statement for any $m\in(0,1)$ is interpolated with a slightly larger constant $c$.} with a constant depending on the modulus of continuity of $u \mapsto \delta_{\mu,u}(x)^p$.
Under our regularity assumptions, this modulus of continuity can be upper bounded according to \cite[Lemma 3]{chazalRatesConvergenceRobust2016} by the function
\begin{align*}
    \omega : v\in[0,1] \mapsto p\Lp\frac{v}{a}\Rp^{\frac{1}{b}}~.
\end{align*}
We now state the convergence of the sample point cloud to the support in Hausdorff distance.

\begin{proposition} %TODO : citer l'autre paper en version finale
\label{prop:approx_support_unif}
Assume that $\mu$ is $(a,b)$-standard.
Then there exists a constant $c>0$ depending on $a$ and $b$ such that for all $n\ge1$,
\begin{align*}
    \EE\bLb \hausdorff(\supp(\mu),\XX_n) \bRb
    \le c \Lp\frac{\log(n)}{n}\Rp^{\frac{1}{b}}~.
\end{align*}
\end{proposition}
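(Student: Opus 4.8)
The plan is the classical net-plus-union-bound argument, made quantitative with a Jensen step to keep the constants clean. Since $\XX_n\subset\supp(\mu)$ almost surely, one has $\hausdorff(\supp(\mu),\XX_n)=\max_{x\in\supp(\mu)}d(x,\XX_n)$. I would fix the net scale $\delta\triangleq(na)^{-1/b}$ — which satisfies $\delta\le 2a^{-1/b}$ for every $n\ge1$, so that \cref{lemma:standard_covering} applies — and take a minimal $\delta$-covering $\{x_1,\dots,x_N\}$ of $\supp(\mu)$; \cref{lemma:standard_covering} then gives $N\le 2^b a^{-1}\delta^{-b}=2^bn$. Approximating an arbitrary $x\in\supp(\mu)$ by its nearest net point yields $d(x,\XX_n)\le\delta+d(x_i,\XX_n)$, hence $\hausdorff(\supp(\mu),\XX_n)\le\delta+\max_{1\le i\le N}d(x_i,\XX_n)$.

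Next, by \cref{lemma:approx_support} and a union bound, $\PP\bigl(\max_i d(x_i,\XX_n)>\epsilon\bigr)\le Ne^{-na\epsilon^b}$ for all $\epsilon>0$. Since $t\mapsto t^b$ is convex (here $b\ge1$), Jensen's inequality together with the layer-cake formula gives
\[
\EE\Bigl[\max_i d(x_i,\XX_n)\Bigr]
\le \Bigl(\EE\bigl[\max_i d(x_i,\XX_n)^b\bigr]\Bigr)^{1/b}
= \Bigl(\int_0^\infty \PP\bigl(\max_i d(x_i,\XX_n)>s^{1/b}\bigr)\,\d s\Bigr)^{1/b}
\le \Bigl(\int_0^\infty \min\!\bigl(1,Ne^{-nas}\bigr)\,\d s\Bigr)^{1/b}.
\]
The inner integral is now elementary, the exponent being linear in $s$: splitting at $s=\frac{\log N}{na}$ gives $\int_0^\infty\min(1,Ne^{-nas})\,\d s=\frac{\log N+1}{na}$, so $\EE[\max_i d(x_i,\XX_n)]\le\bigl(\frac{\log N+1}{na}\bigr)^{1/b}$.

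To finish, write $L\triangleq\log(2e^{2b}n)$. With $\delta=(na)^{-1/b}$ we have $\log N+1\le b\log 2+\log n+1\le L$, the last inequality being $b\log 2+1\le\log 2+2b$, valid for all $b\ge1$. Thus $\EE[\max_i d(x_i,\XX_n)]\le(L/(na))^{1/b}$. Weighted AM–GM with weights $\tfrac1b$ and $1-\tfrac1b$ gives $L^{1/b}=L^{1/b}\cdot 1^{1-1/b}\le\tfrac Lb+1-\tfrac1b$, whence
\[
\EE\bigl[\hausdorff(\supp(\mu),\XX_n)\bigr]
\le \delta+\frac{L^{1/b}}{(na)^{1/b}}
\le \frac{1+L/b+1-1/b}{(na)^{1/b}}
= \frac{L/b+2-1/b}{(na)^{1/b}}
\le \frac{2L}{b(na)^{1/b}},
\]
the last step using $2b-1\le L$, which holds since $L\ge2b$. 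This is exactly the claimed bound $\tfrac2b\frac{\log(2e^{2b}n)}{(an)^{1/b}}$.

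The argument is conceptually routine; the only delicate part is the constant bookkeeping in the last paragraph. The Jensen reduction is what turns the tail integral into one with a linear exponent and a clean closed form, and weighted AM–GM is what converts $L^{1/b}$ into $L/b+O(1)$; the $e^{2b}$ factor inside the logarithm is present precisely so that $\log N+1\le L$ and $2b-1\le L$ hold simultaneously for all $n\ge1$ and all $b\ge1$. One small point to check is the degenerate case $N=1$ (a support of diameter at most $2\delta$), but the formula $\frac{\log N+1}{na}$ and all subsequent bounds remain valid there as well.
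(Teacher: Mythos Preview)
Your proof is correct. Both arguments begin identically: take a $\delta$-net on $\supp(\mu)$, use \cref{lemma:standard_covering} for its size, and reduce to bounding $\EE[\max_i d(x_i,\XX_n)]$. The divergence is in how that maximum is controlled. The paper uses a moment-generating-function approach: it exponentiates, applies Jensen, bounds $\EE[\exp(\lambda d(y,\XX_n))]$ via the tail estimate of \cref{lemma:approx_support}, and then optimizes over the two free parameters $\rho$ and $\lambda$. You instead apply Jensen to the convex map $t\mapsto t^b$, which has the effect of linearizing the exponent in the tail bound and collapses the layer-cake integral to the closed form $(\log N+1)/(na)$ with no optimization needed. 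Your route is shorter and more elementary, and the subsequent constant bookkeeping (the AM--GM step converting $L^{1/b}$ to $L/b+O(1)$) is transparent; the paper's MGF route is more general-purpose and would adapt more readily if one wanted sub-Gaussian-type concentration rather than just the expectation.
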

\Cref{prop:approx_support_unif} may be obtained following the same overall reasoning as in \cite{cuevasBoundaryEstimation2004} or \cite{chazalConvergenceRatesPersistence2015}.
Intuitively, the standard assumption implies that any point $x$ in the support should be at distance of order $(an)^{-1/b}$ from $\XX_n$ at most. Indeed, $\mu\bLp\cB(x,(an)^{-1/b}\bRp \ge \frac1n$, so that this ball is expected to contain at least $1$ point in~$\XX_n$.
Moreover, the standard assumption implies that the covering number of the support when using balls of radius $n^{-1/b}$ is of order~$n$.
Therefore, the Hausdorff distance can be upper bounded by the covering radius plus the maximal distance from any of the $n$ centers to $\XX_n$, which eventually leads to an upper bound on the expectation of order $(\log(n)/n)^{1/b}$ as stated. The precise computations are omitted here and we now move on to the proof of \cref{thm:fdtm_estimate}.

\begin{proof}[Proof of \cref{thm:fdtm_estimate}]
Recall that we assume $\diam(\K)=1$ without loss of generality.
In order to use \cref{prop:approx_dtm}, assume that $m\le\frac12$ and $n\ge\frac1m$.
Decompose the offset into the offset \ac{wrt} the \ac{dtm} and the offset \ac{wrt} the domain as
\begin{align}
    \label{eq:proof:fdtm_estimate1}
    \bvv D_\mu - D_{\hat\mu_n} \bvv_{\infty,\K}
    \le \bvv D_{\mu,\supp(\mu)} - D_{\hat\mu_n,\supp(\mu)} \bvv_{\infty,\K}
        + \bvv D_{\hat\mu_n,\supp(\mu)} - D_{\hat\mu_n,\XX_n} \bvv_{\infty,\K}~.
\end{align}
Recall that $\sigma \le d_\mu \le 1$ and $d_{\hat\mu_n} \le 1$ \ac{as} over $\K$ by assumptions.
If $\beta\ge p$ then $t\in[0,1] \mapsto t^{\beta/p}$ is $\frac{\beta}{p}$-Lipschitz hence
\begin{align*}
    \bv d_\mu^\beta - d_{\hat\mu_n}^\beta \bv
    \le \frac{\beta}{p} \bv d_\mu^p - d_{\hat\mu_n}^p \bv
    \qquad\textrm{\ac{as}}
\end{align*}
Else if $\beta\le p$, for all $t>0$ and $s\ge0$, $t^{\beta-p} s^p \le s^\beta \le t^\beta = t^{\beta-p} t^p$ if $s\le t$ and $t^{\beta-p} s^p \ge s^\beta \ge t^\beta = t^{\beta-p} t^p$ if $s \ge t$, so that either way
\begin{align*}
    \bv t^\beta - s^\beta \bv
    \le t^{\beta-p} \bv t^p - s^p \bv~.
\end{align*}
In particular, with $t=d_\mu$ and $s=d_{\hat\mu_n}$,
\begin{align*}
    \bv d_\mu^\beta - d_{\hat\mu_n}^\beta \bv
    \le d_\mu^{\beta-p} \bv d_\mu^p - d_{\hat\mu_n}^p \bv
    \le \sigma^{\beta-p} \bv d_\mu^p - d_{\hat\mu_n}^p \bv
    \qquad\textrm{\ac{as}}
\end{align*}
Therefore, whether $\beta$ is larger or smaller than $p$, it always holds that
\begin{align}
    \label{eq:proof:fdtm_estimateLip}
    \bv d_\mu^\beta - d_{\hat\mu_n}^\beta \bv
    \le \max\Lp\frac{\beta}{p},\sigma^{\beta-p}\Rp \bv d_\mu^p - d_{\hat\mu_n}^p \bv
    \qquad\textrm{\ac{as}}
\end{align}
The first term in \cref{eq:proof:fdtm_estimate1} can thus be bounded as follows:
\begin{align}
    \label{eq:proof:fdtm_estimate2}
    \EE\Lb \bvv D_{\mu,\supp(\mu)} - D_{\hat\mu_n,\supp(\mu)} \bvv_{\infty,\K} \Rb
    &\lesssim \EE\Lb \bvv d_\mu^\beta - d_{\hat\mu_n}^\beta \bvv_{\infty,\K} \Rb
        &&\textrm{(\cref{thm:fdtm_stab_dtm})}\nonumber\\
    &\le \max\Lp\frac{\beta}{p},\sigma^{\beta-p}\Rp \EE\Lb \bvv d_\mu^p - d_{\hat\mu_n}^p \bvv_{\infty,\K} \Rb
        &&\textrm{(\crefnopar{eq:proof:fdtm_estimateLip}})\nonumber\\
    &\lesssim \frac{d~\log(n)}{\sigma^{(p-\beta)\vee0} \sqrt{n}}
        &&\textrm{(\cref{prop:approx_dtm})}
\end{align}
where $\lesssim$ hides a multiplicative constant depending on $m$, $p$, $\beta$, $a$ and $b$.
Regarding the second term in \cref{eq:proof:fdtm_estimate1},
\begin{align}
    \label{eq:proof:fdtm_estimate3}
    \EE\Lb \bvv D_\mu - D_{\mu,\XX_n} \bvv_{\infty,\K} \Rb
    &\lesssim \EE\Lb\sqrt{\hausdorff(\supp(\mu),\XX_n)}\Rb
        &&\textrm{(\cref{thm:intro:fdtm_stab_domain})}\nonumber\\
    &\le \sqrt{\EE\bLb\hausdorff(\supp(\mu),\XX_n)\bRb}
        &&\textrm{(Jensen's inequality)}\nonumber\\
    &\lesssim \Lp\frac{\log(n)}{n}\Rp^{\frac{1}{2b}}~.
        &&\textrm{(\cref{prop:approx_support_unif})}
\end{align}
Putting together \cref{eq:proof:fdtm_estimate1,eq:proof:fdtm_estimate2,eq:proof:fdtm_estimate3} concludes the proof of \cref{thm:fdtm_estimate}.
When $\diam(\K) \ne 1$, the dependency with $\diam(\K)$ appears naturally in the inequality as the \ac{fdtm} is homogeneous to a distance at power $\beta+1$ and $\sigma$ is homogeneous to a distance.
\end{proof}

\end{document}